\documentclass[12pt,leqno]{article}
\usepackage{amsfonts}
\pagestyle{plain}
\usepackage{amsmath, amsthm, amsfonts, amssymb, color}
\usepackage{mathrsfs}
\setlength{\topmargin}{0cm} \setlength{\oddsidemargin}{0cm}
\setlength{\evensidemargin}{0cm} \setlength{\textwidth}{16.5truecm}
\setlength{\textheight}{22truecm}

\renewcommand{\bar}{\overline}
\renewcommand{\hat}{\widehat}
\renewcommand{\tilde}{\widetilde}

\newtheorem{thm}{Theorem}[section]

\newtheorem{lem}[thm]{Lemma}

\newtheorem{exa}[thm]{Example}
\theoremstyle{definition}

\newcommand{\scr}[1]{\mathscr #1}
\definecolor{wco}{rgb}{0.5,0.2,0.3}

\numberwithin{equation}{section} \theoremstyle{remark}
\newtheorem{rem}{Remark}[section]

\newcommand{\ua}{\uparrow}

\title{{\bf Invariant Measures for Path-Dependent Random Diffusions\footnote{Supported in
 part by    NNSFs of China (Nos. 11301030, 11431014, 11401592) and 985-project.}}
}
\author{
{\bf  Jianhai Bao$^{b)}$,  Jinghai Shao$^{a)}$, Chenggui Yuan$^{c)}$}\\
\footnotesize{$^{a)}$Center of Applied Mathematics, Tianjin
University, Tianjin 300072, China}\\
\footnotesize{$^{b)}$School of Mathematics and Statistics, Central
South
University, Changsha 410083, China}\\
\footnotesize{$^{c)}$Department of Mathematics, Swansea University,
Singleton Park, SA2 8PP, UK}\\ \footnotesize{jianhaibao13@gmail.com,
shaojh@bnu.edu.cn, C.Yuan@swansea.ac.uk}}

\begin{document}
\def\R{\mathbb R}  \def\ff{\frac}  \def\B{\mathbf
B}
\def\N{\mathbb N} \def\kk{\kappa} \def\m{{\bf m}}
\def\dd{\delta} \def\DD{\Delta} \def\vv{\varepsilon} \def\rr{\rho}
\def\<{\langle} \def\>{\rangle} \def\GG{\Gamma} \def\gg{\gamma}
  \def\nn{\nabla} \def\pp{\partial} \def\EE{\scr E}
\def\d{\text{\rm{d}}} \def\bb{\beta} \def\aa{\alpha} \def\D{\scr D}
  \def\si{\sigma} \def\ess{\text{\rm{ess}}}
\def\beg{\begin} \def\beq{\begin{equation}}  \def\F{\scr F}
\def\Ric{\text{\rm{Ric}}} \def\Hess{\text{\rm{Hess}}}
\def\e{\text{\rm{e}}} \def\ua{\underline a} \def\OO{\Omega}  \def\oo{\omega}
 \def\tt{\tilde} \def\Ric{\text{\rm{Ric}}}
\def\cut{\text{\rm{cut}}} \def\P{\mathbb P} \def\ifn{I_n(f^{\bigotimes n})}
\def\C{\scr C}      \def\aaa{\mathbf{r}}     \def\r{r}
\def\gap{\text{\rm{gap}}} \def\prr{\pi_{{\bf m},\varrho}}  \def\r{\mathbf r}
\def\Z{\mathbb Z} \def\vrr{\varrho} \def\ll{\lambda}
\def\L{\scr L}\def\Tt{\tt} \def\TT{\tt}\def\II{\mathbb I}
\def\i{{\rm in}}\def\Sect{{\rm Sect}}\def\E{\mathbb E} \def\H{\mathbb H}
\def\M{\scr M}\def\Q{\mathbb Q} \def\texto{\text{o}} \def\LL{\Lambda}
\def\Rank{{\rm Rank}} \def\B{\scr B} \def\i{{\rm i}} \def\HR{\hat{\R}^d}
\def\to{\rightarrow}\def\l{\ell}
\def\8{\infty}\def\X{\mathbb{X}}\def\3{\triangle}
\def\V{\mathbb{V}}\def\M{\mathbb{M}}\def\W{\mathbb{W}}\def\Y{\mathbb{Y}}\def\1{\lesssim}

\def\La{\Lambda}\def\S{\mathbf{S}}

\renewcommand{\bar}{\overline}
\renewcommand{\hat}{\widehat}
\renewcommand{\tilde}{\widetilde}

\maketitle

\begin{abstract}
In this work, we are concerned with existence and uniqueness of
invariant measures for  path-dependent random diffusions and their
time discretizations. The random diffusion here means a diffusion
process living in a random environment characterized by a continuous
time Markov chain. Under certain ergodic conditions, we show that
the path-dependent random diffusion enjoys a unique invariant
probability measure and converges exponentially to its equilibrium
under the Wasserstein distance. Also, we demonstrate that the time
discretization of the path-dependent random diffusion involved
admits a unique invariant probability measure and shares the
corresponding ergodic property when the stepsize is sufficiently
small. During this procedure, the difficulty arose from the
time-discretization of continuous time Markov chain has to be deal
with, for which an estimate on its exponential functional is
presented.

\end{abstract}
\noindent
 AMS subject Classification:\  37A25 $\cdot$ 60H10 $\cdot$  60H30 $\cdot$ 60F10 $\cdot$ 60K37.    \\
\noindent
 Keywords: Invariant measure; Path-dependent random diffusion;
 Ergodicity; Wasserstein distance; Euler-Maruyama scheme
 \vskip 2cm

\section{Introduction and Main Results}
A random diffusion  is a  Markov process consisting of two
components $(X(t),\LL(t))$, where the first component $X(t)$ means
the underlying continuous dynamics and the second one $\LL(t)$
stands for a   jump process. Such diffusions have a wide range of
emerging and existing applications in, for instance, climate science,
material science, molecular biology,  ecosystems, econometric
modeling, and control and optimization of largescale systems; see,
e.g., \cite{BR,KMS,LMY,MT,MTY,SYZ,ZY} and references therein.
Viewing random diffusions as   a number of diffusions with random
switching, they may be seemingly not much different from their
diffusion counterpart. Nevertheless, the coexistence of continuous
dynamics and jump processes results in challenge in dealing with
random diffusions $(X(t),\LL(t))$ under consideration, even though,
in each random temporal environment, $X(t)$ is simple enough for
intuitive understanding.  \cite{LS} revealed that $X(t)$ is
exponentially stable in $p$-th moment in a random temporal
environment and  algebraically stable in $p$-th moment in the other
scenarios, whereas $X(t)$ is ultimately exponentially stable;
\cite{PP,PS} constructed several very interesting examples to show
that $(X(t),\LL(t))$ is recurrent (resp. transient) even if
 $X(t)$ is transient (resp.
recurrent) in each random temporal environment; Unlike
Ornstein-Uhlenbeck (OU) process admitting light tail, the random OU
process enjoys heavy tail property shown in \cite{B10,DY}.

Recently,  ergodicity of random diffusions with state dependent or
state independent jump rates has been investigated extensively; see,
for example,  \cite{B10,BL,CH,Shao,Sh15,SX} for the setting of state
independent jump rates, \cite{BL,CH,Sh15} for the setup of bounded
state dependent jump rates, \cite{MT,TM} for the framework of
unbounded and state dependent jump rates. So far, there are several
approaches to explore ergodicity for random diffusions; see, for
instance, \cite{B10,BL,Sh15} via probabilistic coupling argument,
\cite{CH,MT,TM} by weak Harris' theorem, \cite{Shao,Sh15} based on
the theory of M-matrix and Perron-Frobenius theorem. For the
ergodicity of random diffusions with infinite regimes, we refer to
\cite{Shao,Sh15,XZ}.

More often than not, to understand very well the behavior of
numerous real-world systems, one of the better ways is to take the
influence of past events on the current and future states of the
systems involved  into consideration. Such point of view is
especially appropriate in the study on population biology,  neural
networks, viscoelastic materials subjected to heat or mechanical
stress, and financial products, to name a few,  since predictions on
their evolution rely heavily on  the knowledge of their past; see,
for instance, \cite{AHM,CC,MS,Mao} and references therein for more
details. There is vast literature on path-dependent ordinary
differential equation, among which the monograph \cite{HL} provides
an introduction to this subject. Also, there is a sizeable
literature on path-dependent stochastic  differential equations
(SDEs); see, e.g., \cite{HMS,KW,M84,RRV} and references therein.
Concerning existence and uniqueness of invariant probability
measures for path-dependent SDEs, we refer to \cite{RRV}, where
the drift term is semi-linear, \cite{ESV} with the drift part being
superlinear growth and satisfying a dissipativity condition, and
\cite{BS} under the extended Veretennikov-Khasminski condition.


Under certain Lyapunov condition which is not related to stationary
distribution of Markov chain involved, \cite{YM,YMa} investigated
existence and uniqueness of invariant probability measures for a
class of random diffusions by exploiting the M-matrix trick, and
\cite{YZM} further discussed the same issue for a range of
path-dependent random diffusions. Recently, under  ergodic
conditions, \cite{BSY} probed deeply into existence and uniqueness
of invariant probability measures for a kind of random diffusions by
developing new analytical frameworks.

As described above, there is a natural motivation for considering
stochastic dynamical systems, where all three features (i.e. random
switching, path dependence and noise) are present. In this work, we
are interested in ergodic properties for path-dependent random
diffusions. More precisely, as a continuation of \cite{BSY}, in the
current work we are concerned with existence and uniqueness of
invariant probability measures not only for path-dependent random
diffusions but also for their time discretizations.   In comparison
with \cite{BSY,YM,YMa}, the difficulties to deal with existence and
uniqueness of (numerical) invariant probability measures for
path-dependent random diffusions lie in: (i) the state space of
functional solutions $(X_t)_{t\ge0}$ is an infinite-dimensional
space; (ii) Both components of $(X_t,\LL(t))$ are discretized,
where,  in particular, time discretization of continuous time Markov
chain causes additional difficulties in analyzing the long-term
behavior of numerical scheme; (iii) Our investigation is based on
certain ergodic conditions. So, it turns out to be much more
challenging to cope with long term (numerical) behavior of
path-dependent random diffusions. In the present work, it is worthy
to pointing out that $(X_t,\LL(t))$ possesses a unique invariant
probability measure although the functional solution $X_t$ doesn't
admit an invariant probability measure in some fixed
environment, which is quite different from the existing results;
see, e.g. \cite{BS,ESV,RRV}. For more and precise interpretations
on $(X_t)_{t\ge0}$ and $(\LL(t))_{t\ge0}$, please refer to
subsections \ref{sub1}-\ref{sub4}.

Prior to  presentation of  the setting for this work, we  consider
and introduce some notation and terminology needed in the rest of
the paper. Let $(\R^n,\<\cdot,\cdot\>,|\cdot|)$ be the
$n$-dimensional Euclidean space.
 For fixed $\tau>0$, let $\C=C([-\tau,0];\R^n)$ denote the
family of all continuous functions $f:[-\tau,0]\rightarrow\R^n$,
endowed with the uniform norm
$\|f\|_\8:=\sup_{-\tau\le\theta\le0}|f(\theta)|$. Let
$\mathbf{S}=\{1,2\cdots,N\}$ for some integer $N\in[2,\8)$. Let
$(\LL(t))$ stand  for  a continuous--time Markov chain with the
state space $\mathbf{S}$, and the transition rules
specified by
\begin{equation}\label{y1}
\P(\LL(t+\3)=j|\LL(t)=i)=
\begin{cases}
q_{ij}\3+o(\3),~~~~~~~~i\neq j\\
1+q_{ii}\3+o(\3),~~~i=j
\end{cases}
\end{equation}
provided $\3\downarrow0$, where $o(\3)$ means that
$\lim_{\3\rightarrow0}\ff{o(\3)}{\3}=0$, and $Q=(q_{ij})$ be the
$Q$-matrix associated with the Markov chain $(\LL(t))$. Let $(W(t))$
be an $m$-dimensional Brownian motion.  We assume that $(\LL(t))$
 is irreducible, together with the finiteness of $\mathbf{S}$, which yields the positive
recurrence. Let
$\pi=(\pi_1,\cdots,\pi_N)$ denote its stationary distribution, which
can be solved by $\pi Q=0$ subject to $\sum_{i\in\S}\pi_i=1$ with
$\pi_i\ge0$. Assume that $(\LL(t))$ is independent of $(W(t))$. Let
$\|\cdot\|_{\rm HS}$ means the Hilbert-Schmidt norm. Let ${\bf
E}=\C\times\S$. For any ${\bf x}=(\xi,i)\in{\bf E}$ and  ${\bf
y}=(\eta,j)\in{\bf E}$, define the distance $\rr$ between ${\bf x}$ and
${\bf y}$  by
\begin{equation*}
\rr({\bf x},{\bf y})=\|\xi-\eta\|_\infty+{\bf1}_{\{i\neq j\}},
\end{equation*}
where, for a set $A$, ${\bf1}_A(x)=1$ with $x\in A$; otherwise,
${\bf1}_A(x)=0$. Let $\mathcal {P}=\mathcal {P}({\bf E})$ be the
space of all probability measures on ${\bf E}$.
Set
\[\mathcal{P}_0=\{\nu\in \mathcal{P}; \ \int_{\mathbf{E}} \|\xi\|_{\infty}\nu(\d \xi)<\infty\}. \]
Define the
Wasserstein distance $W_\rr$ between two probability measures $\mu,\,\nu \in\mathcal {P}_0$ as follows:
\begin{equation*}
W_\rr(\mu,\nu)=\inf_{\pi\in\mathcal {C}(\mu,\nu)}\Big\{\int_{{\bf
E}\times{\bf E}}\rr({\bf x},{\bf y})\pi(\d {\bf x},\d {\bf
y})\Big\}<\infty,
\end{equation*}
where $\mathcal {C}(\mu,\nu)$ denotes the collection of all
probability measures on ${\bf E}\times{\bf E}$ with marginals $\mu$
and $\nu$, respectively.  In this work,
$c>0$ will stand for a generic constant which might change from
occurrence to occurrence.

Next, we present the framework of this work and state our main
results.

\subsection{Invariant Measures:  Additive Noises}\label{sub1}
In this subsection, we focus on a  path-dependent random diffusion
with additive noise
\begin{equation}\label{h2}
\d X(t)=b(X_t,\LL(t))\d t+\si(\LL(t))\d
W(t),~~~t>0,~~~X_0=\xi\in\C,~~~\LL(0)=i_0\in\S,
\end{equation}
where  $b:\C\times\S\rightarrow\R^n$,
$\si:\S\rightarrow\R^n\otimes\R^m$, and, for fixed $t\ge0,$
$X_t(\theta)=X(t+\theta)$, $\theta\in[-\tau,0],$ used the standard
notation.

We assume that, for each $i\in\S$ and arbitrary $\xi,\eta\in\C,$
\begin{enumerate}
\item[({\bf A})]
There exist $\aa_i\in\R$ and $\bb_i\in\R_+$ such that
\begin{equation*}
2\<\xi(0)-\eta(0),b(\xi,i)-b(\eta,i)\> \le
\aa_i|\xi(0)-\eta(0)|^2+\bb_i\|\xi-\eta\|_\8^2.
\end{equation*}

\end{enumerate}

Under ({\bf A}), in terms of  \cite[Theorem 2.3]{VS},  \eqref{h2}
admits a unique strong solution $(X(t;\xi,i_0))$ with the initial
datum $X_0=\xi\in\C$  and $\LL(0)=i_0\in\S$. The  segment process
(i.e., functional solution) associated with the solution process
$(X(t;\xi,i_0))$  is denoted by $(X_t(\xi,i_0))$. The pair
$(X_t(\xi,i_0),\LL(t))$ is a homogeneous Markov process; see, for
instance, \cite[Theorem 1.1]{M84} \& \cite[Proposition 3.4]{RRV}.

For $(\aa_i)$ and $(\bb_i)$   introduced in ({\bf A}), set
\begin{equation}\label{q11}
\hat \aa:=\min_{i\in\S}\aa_i,~~ \check{\aa}:=\max_{i\in\S}|\aa_i|
~~\text{ and }~~ \check{\bb}:=\max_{i\in\S}\bb_i.\end{equation}
Moreover, set
$$ Q_1:=Q+\,\mathrm{diag}\Big(\aa_1+\e^{-\hat{\alpha}\tau}\bb_1,
\cdots,\aa_N+\e^{-\hat{\alpha}\tau}\bb_N\Big), $$ where $Q$ is the
$Q$-matrix of the Markov chain $(\LL(t))$, $\tau>0$ is the length
of time lag, and $\mathrm{diag}(x_1,\ldots,x_N)$ denotes the diagonal matrix generated by the vector $(x_1,\ldots,x_N)$. Let \begin{equation}\label{b3} \eta_1=
-\max_{\gamma\in\mathrm{spec}(Q_1)}\mathrm{Re}(\gamma),
\end{equation}
where $\mathrm{spec}(Q_1)$ and $\mathrm{Re}(\gg)$ denote respectively the
spectrum (i.e., the multiset of its eigenvalues) of $Q_1$  and the
real part of $\gg$.
Let $(\La^i(t),\La^j(t))$ be the independent coupling of the $Q$-process $(\La(t))$ with starting point $(\La^i(0),\La^{j}(0))=(i,j)$.
Let $T=\inf\{t\ge0:\LL^i(t)=\LL^j(t)\}$ be  the coupling time of
$(\LL^i(t),\LL^j(t))$.
Since the cardinality of $\S$ is finite  and $(q_{ij})$ is
irreducible, there exists a constant $\theta>0$ such that
\begin{equation}\label{b22}
\P(T>t)\le\e^{-\theta t},~~~~t>0.
\end{equation}
Let $P_t((\xi,i),\cdot) $ be the transition kernel of
$(X_t(\xi,i),\LL^i(t))$. For $\nu\in\mathcal {P}$, $\nu P_t$ denotes
the law of $(X_t(\xi,i),\LL^i(t))$ when $(X_0(\xi,i),\LL^i(0))$ is
distributed according to $\nu\in\mathcal {P}.$

Our first main result  in this paper is stated as follows.
\begin{thm}\label{th4}
{\rm Suppose ({\bf A}) holds and  $\eta_1>0$. Then, it holds that 
\begin{equation}\label{hui72}
W_\rr(\nu_1 P_t,\nu_2  P_t)\le
c\Big(1+\sum_{i\in\S}\int_\C\|\xi\|_\8\nu_1(\d\xi,i)+\sum_{i\in\S}\int_\C\|\eta\|_\8\nu_2(\d\eta,i)\Big)\e^{-\ff{\theta\eta_1}{2(\theta+\eta_1)}t}
\end{equation}
for any $\nu_1,\nu_2\in\mathcal {P}_0$, where $\eta_1$ is defined in
\eqref{b3} and $\theta>0$ is specified in \eqref{b22}.
 Furthermore, \eqref{hui72} implies that $(X_t(\xi,i),\LL^i(t))$,
 determined by \eqref{h2} and \eqref{y1},
admits a unique invariant probability measure $\mu\in\mathcal {P}_0$
such that
\begin{equation}\label{t1}
W_\rr(\dd_{(\xi,i)}P_t,\mu P_t)\le
c\Big(1+\|\xi\|_\8+\sum_{i\in\S}\int_\C\|\eta\|_\8\mu(\d\eta,i)\Big)\e^{-\ff{\theta\eta_1}{2(\theta+\eta_1)}t},
\end{equation}
where $\dd_{(\xi,i)}$ stands for the Dirac's measure at the point
$(\xi,i)$. }
\end{thm}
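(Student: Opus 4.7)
The strategy is to derive the Wasserstein contraction \eqref{hui72} via a coupling argument, and then obtain the unique invariant measure and \eqref{t1} as standard consequences. For initial conditions $(\xi,i)$ and $(\eta,j)$, I couple the chains $(\LL^i(t),\LL^j(t))$ as in \eqref{b22}, i.e., independently before their meeting time $T$ and merged afterwards, and I couple the two $X$-components synchronously, driving both by a common Brownian motion $W$. The additive, $\LL$-dependent structure of $\si$ in \eqref{h2} is decisive here: once $\LL^i(t)=\LL^j(t)$, the martingale part of $Y(t):=X^{\xi,i}(t)-X^{\eta,j}(t)$ vanishes identically. By definition of $W_\rr$, one then has $W_\rr(\nu_1 P_t,\nu_2 P_t)\le \E\,\rr((X^{\xi,i}_t,\LL^i(t)),(X^{\eta,j}_t,\LL^j(t)))$ integrated against $\nu_1\otimes\nu_2$.

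\emph{Splitting at a free parameter $s\in[0,t]$.} I write $\E\rr=\E[\rr{\bf 1}_{\{T>s\}}]+\E[\rr{\bf 1}_{\{T\le s\}}]$. For the first piece, Cauchy--Schwarz combined with \eqref{b22} and the a priori second-moment bound $\E\|X^{\xi,i}_t\|_\8^2\le c(1+\|\xi\|_\8^2)$ uniformly in $t$ (obtained by applying the same Perron--Frobenius machinery used below to the single-process Lyapunov functional $\zeta_{\LL(t)}|X(t)|^2$) yields $\E[\rr{\bf 1}_{\{T>s\}}]\le c(1+\|\xi\|_\8+\|\eta\|_\8)\e^{-\theta s/2}$, which integrates against $\nu_1\otimes\nu_2$ to the form claimed in \eqref{hui72}.

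\emph{Post-coupling contraction via Perron--Frobenius.} On $\{T\le s\}$ the two chains coincide from time $T$ onwards, so ({\bf A}) gives
\[\d|Y(t)|^2\le\bigl(\aa_{\LL(t)}|Y(t)|^2+\bb_{\LL(t)}\|Y_t\|_\8^2\bigr)\d t,\qquad t\ge T.\]
Since $Q_1$ is Metzler (its off-diagonal entries are the $q_{ij}\ge 0$) and $Q$ is irreducible, Perron--Frobenius for Metzler matrices supplies a strictly positive right eigenvector $\zeta\in\R^N$ with $Q_1\zeta=-\eta_1\zeta$. I use the delay-corrected Lyapunov functional
\[V(t)=\zeta_{\LL(t)}|Y(t)|^2+\e^{-\hat\aa\tau}\int_{t-\tau}^t\bb_{\LL(r)}\zeta_{\LL(r)}|Y(r)|^2\,\d r,\]
chosen so that the path-norm term $\bb_i\|Y_t\|_\8^2$ is absorbed precisely by the $\e^{-\hat\aa\tau}\bb_i$ entry on the diagonal of $Q_1$. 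Dynkin's formula together with $Q_1\zeta=-\eta_1\zeta$ then yields $\E[V(t){\bf 1}_{\{T\le s\}}]\le\e^{-\eta_1(t-s)}\E[V(s){\bf 1}_{\{T\le s\}}]$ for $t\ge s$, and, combined with the moment bound and Cauchy--Schwarz, this produces $\E[\rr{\bf 1}_{\{T\le s\}}]\le c(1+\|\xi\|_\8+\|\eta\|_\8)\e^{-\eta_1(t-s)/2}$.

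\emph{Optimization and invariant measure.} Equalizing the two exponents, $\theta s/2=\eta_1(t-s)/2$, gives $s=\eta_1 t/(\theta+\eta_1)$ and the common rate $\theta\eta_1/(2(\theta+\eta_1))$, establishing \eqref{hui72}. Existence and uniqueness of $\mu\in\mathcal P_0$ together with \eqref{t1} then follow by the standard Cauchy-in-$W_\rr$ argument: applying \eqref{hui72} with $\nu_2=\nu_1 P_s$ exhibits $\{\nu_1 P_t\}_{t\ge 0}$ as $W_\rr$-Cauchy, whose limit is $P_t$-invariant; uniqueness and \eqref{t1} come from \eqref{hui72} with $\nu_1=\dd_{(\xi,i)}$ and $\nu_2=\mu$. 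The main obstacle is the post-coupling step: the delay-corrected functional $V$ has to be tuned so that the $\e^{-\hat\aa\tau}$ factor in the diagonal of $Q_1$ exactly balances the delay-integral against the pointwise dissipativity coming from $\aa_i$, which is the reason $\eta_1>0$ is the natural net condition and the precise place where the interaction between path-dependence and random switching has to be handled delicately.
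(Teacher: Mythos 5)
Your overall skeleton is the same as the paper's: couple the two chains independently until the meeting time $T$ (merging afterwards) and the diffusions synchronously, split the Wasserstein bound according to whether coupling has occurred by a threshold time, control the pre-coupling piece by Cauchy--Schwarz, \eqref{b22} and a uniform second-moment bound, control the post-coupling piece by an $L^2$-contraction at rate $\eta_1$, optimize the threshold to get the rate $\theta\eta_1/(2(\theta+\eta_1))$, and finish with the standard Cauchy-in-$W_\rr$ argument. The difference, and the problem, is how you prove the post-coupling contraction (the analogue of the paper's Lemma \ref{lem3.1}).

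Your Lyapunov functional $V(t)=\zeta_{\LL(t)}|Y(t)|^2+\e^{-\hat\aa\tau}\int_{t-\tau}^t\bb_{\LL(r)}\zeta_{\LL(r)}|Y(r)|^2\,\d r$ does not deliver the claimed inequality $\E[V(t)]\le \e^{-\eta_1(t-s)}\E[V(s)]$. Under ({\bf A}) the delay enters through the \emph{segment sup-norm} $\bb_{\LL(t)}\|Y_t\|_\8^2$, not through a point evaluation at lag $\tau$ or an integral over the window. Computing the drift of $V$, the eigenvector relation $Q_1\zeta=-\eta_1\zeta$ kills the term $\big((Q\zeta)_{\LL(t)}+(\aa_{\LL(t)}+\e^{-\hat\aa\tau}\bb_{\LL(t)})\zeta_{\LL(t)}\big)|Y(t)|^2$, but you are still left with the uncompensated positive term $\zeta_{\LL(t)}\bb_{\LL(t)}\|Y_t\|_\8^2$, while the integral compensator only produces $+\e^{-\hat\aa\tau}\bb_{\LL(t)}\zeta_{\LL(t)}|Y(t)|^2-\e^{-\hat\aa\tau}\bb_{\LL(t-\tau)}\zeta_{\LL(t-\tau)}|Y(t-\tau)|^2$; a single lagged value (or a time integral) cannot dominate the running supremum, so Dynkin's formula does not close. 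A related gap is that $W_\rr$ requires control of $\E\|X_t(\xi,i)-X_t(\eta,j)\|_\8$, i.e.\ the sup over the window $[t-\tau,t]$, whereas your $V$ only controls $|Y(t)|^2$ pointwise in time; the same issue affects your sketched moment bound for $\zeta_{\LL(t)}|X(t)|^2$, where in addition the stochastic integral does not cancel and a BDG-type argument is needed. The paper avoids all this by conditioning on the switching path $\omega_2$, running a weighted pathwise Gronwall argument directly on the segment norm (the comparison \eqref{x6}, which is exactly where the factor $\e^{-\hat\aa\tau}$ comes from), and then invoking the exponential-functional estimate $\E\,\e^{\int_0^t(\aa_{\LL(s)}+\e^{-\hat\aa\tau}\bb_{\LL(s)})\d s}\le c\,\e^{-\eta_1 t}$ of \cite[Proposition 4.1]{B10}, which is where the spectral quantity $\eta_1$ (and, implicitly, Perron--Frobenius) enters. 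To repair your argument you would either have to reproduce that conditional Gronwall/exponential-functional step, or replace your functional by one adapted to sup-norm delays (e.g.\ a Razumikhin or Halanay-type argument), which is precisely the missing ingredient.
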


\begin{rem}\label{re1}
{\rm  If the assumption $\eta_1>0$ is replaced by
\begin{equation*}
\sum_{i\in\S}(\aa_i+\e^{-\hat{\aa}\,\tau}\bb_i)\pi_i<0
\end{equation*}
and \begin{equation*}
\min_{i\in\mathbb{S},\aa_i+\e^{-\hat{\aa}\,\tau}\bb_i>0}\Big(-\ff{q_{ii}}{\aa_i+\e^{-\hat{\aa}\,\tau}\bb_i}\Big)>1,
\end{equation*}
according to \cite[Propositions 4.1 \& 4.2]{B10}, Theorem \ref{th4}
still holds true. }
\end{rem}

\begin{rem}
{\rm From Theorem \ref{th4} and Remark \ref{re1}, $(X_t,\LL(t))$
might have   a unique invariant probability measure even though the
functional solution $X_t$ does not admit an invariant probability
measure in a random temporal environment just as Example \ref{exa1}
below shows.

}
\end{rem}

\subsection{Invariant Measures:  Multiplicative Noises}\label{sub2}
In this subsection, we move on to consider existence and uniqueness
of  invariant probability measures  under a little bit strong
assumptions but for path-dependent random diffusions  with
multiplicative noises in the form
\begin{equation}\label{eq1}
\d X(t)=b(X_t,\LL(t))\d t+\si(X_t,\LL(t))\d W(t),~~~t>0,
~~X_0=\xi,~~\LL(0)=i_0\in\S,
\end{equation}
where $b:\C\times\S\rightarrow\R^n$ and
$\si:\C\times\S\rightarrow\R^n\otimes\R^m.$

Let $v(\cdot)$ be a probability measure on $[-\tau,0]$ and suppose
that, for any $\xi,\eta\in\C$ and each $i\in\S,$
\begin{enumerate}
\item[({\bf H1})]
There exist $\aa_i\in\R$ and $\bb_i\in\R_+$ such  that
\begin{equation*}
\begin{split}
2&\<\xi(0)-\eta(0),b(\xi,i)-b(\eta,i)\>+\|\si(\xi,i)-\si(\eta,i)\|_{\rm HS}^2\\
&\le
\aa_i|\xi(0)-\eta(0)|^2+\bb_i\int_{-\tau}^0|\xi(\theta)-\eta(\theta)|^2v(\d\theta).
\end{split}
\end{equation*}
\item[({\bf H2})] There exists an $L>0$ such that
\begin{equation*}
\|\si(\xi,i)-\si(\eta,i)\|^2_{\rm HS}\le
L\Big(|\xi(0)-\eta(0)|^2+\int_{-\tau}^0|\xi(\theta)-\eta(\theta)|^2v(\d\theta)\Big).
\end{equation*}
\end{enumerate}

For $(\aa_i)$ and $(\bb_i)$ stipulated in (\textbf{H1}), we set
\begin{equation*}
Q_2:=Q+\,\mbox{diag}\Big(\aa_1+\bb_1\int_{-\tau}^0\e^{\hat{\aa}\theta}v(\d\theta),
\cdots,\aa_N+\bb_N\int_{-\tau}^0\e^{\hat{\aa}\theta}v(\d\theta)\Big),
\end{equation*}
where $\hat\aa$ is defined as in \eqref{q11}. Furthermore, we define
 \begin{equation}\label{q13}
 \eta_2=
-\max_{\gamma\in\mathrm{spec}(Q_2)}\mathrm{Re}(\gamma).
\end{equation}

Under appropriate assumptions, the semigroup generated by the pair
$(X_t(\xi,i),\LL^i(t))$ converges exponentially to the equilibrium
under the Wasserstein distance as one of the main results below
reads.

\begin{thm}\label{th0}
{\rm Let ({\bf H1})-({\bf H2}) hold and assume further $\eta_2>0$.
Then,
\begin{equation}\label{hui7}
W_\rr(\nu_1 P_t,\nu_2 P_t)\le
c\,\Big(1+\sum_{i\in\S}\int_\C\|\xi\|_\8\nu_1(\d\xi,i)+\sum_{i\in\S}\int_\C\|\eta\|_\8\nu_2(\d\eta,i)\Big)\e^{-\ff{\theta\eta_2}{2(\theta+\eta_2)}t}
\end{equation}
for any $\nu_1,\nu_2\in\mathcal {P}_0$, where $\theta>0$ such that
\eqref{b22} holds and $\eta_2>0$ is defined in \eqref{q13}.
Furthermore, \eqref{hui7} implies that $(X_t(\xi,i),\LL^i(t))$
solving \eqref{eq1} and \eqref{y1} and admits a unique invariant
probability measure $\mu\in\mathcal {P}_0$ such that
\begin{equation}\label{eq12}
W_\rr(\dd_{(\xi,i)}P_t,\mu P_t)\le
c\Big(1+\|\xi\|_\8+\sum_{i\in\S}\int_\C\|\eta\|_\8\mu(\d\eta,i)\Big)\e^{-\ff{\theta\eta_2}{2(\theta+\eta_2)}t}.
\end{equation}
}
\end{thm}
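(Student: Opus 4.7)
The plan is to adapt the coupling/Perron-Frobenius strategy used for Theorem \ref{th4} to the multiplicative setting, using assumption (\textbf{H1}) to absorb the diffusion contribution into the one-sided Lipschitz bound. I would first record an a priori moment estimate of the form $\sup_{t\ge 0}\mathbb{E}\|X_t(\xi,i)\|_\8 \le c(1+\|\xi\|_\8)$ for the functional solution of \eqref{eq1}. Under (\textbf{H1}) with $\eta\equiv 0$ and (\textbf{H2}), this follows by applying Itô's formula to $|X(t)|^2$, using the Perron-Frobenius eigenvector of $Q_2$ to absorb the drift and noise via $\eta_2>0$, and closing a Gronwall inequality on segment norms. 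This step supplies exactly the factor $(1+\|\xi\|_\8+\|\eta\|_\8)$ appearing in \eqref{hui7}.

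The heart of the argument is a coupling estimate. Let $(\LL^i,\LL^j)$ be the independent coupling of the $Q$-process with coupling time $T$, made synchronous after $T$, and let $X$, $Y$ solve \eqref{eq1} with initial data $(\xi,i)$, $(\eta,j)$, driven by the same Brownian motion. For $t\ge T$ both processes live in the common environment $\LL(t):=\LL^i(t)=\LL^j(t)$, and applying Itô's formula together with (\textbf{H1}) gives
\begin{equation*}
\d|X(t)-Y(t)|^2\le\Big(\aa_{\LL(t)}|X(t)-Y(t)|^2+\bb_{\LL(t)}\int_{-\tau}^0|X(t+\theta)-Y(t+\theta)|^2 v(\d\theta)\Big)\d t+\d M(t),
\end{equation*}
where the local martingale part $M$ is controllable by (\textbf{H2}) via a standard localization. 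Since $\eta_2>0$ and $Q_2$ is essentially non-negative, Perron-Frobenius supplies a strictly positive right eigenvector $u=(u_1,\ldots,u_N)$ with $Q_2 u=-\eta_2 u$. I would then introduce a Lyapunov functional of the form
\begin{equation*}
\Psi(t)=u_{\LL(t)}|X(t)-Y(t)|^2+\int_{-\tau}^0 g(\theta)|X(t+\theta)-Y(t+\theta)|^2 \d\theta,
\end{equation*}
with the weight $g$ chosen so that its differentiation exactly cancels the delay term from (\textbf{H1}) against the factor $\int_{-\tau}^0 e^{\hat\aa\theta}v(\d\theta)$ built into $Q_2$. The identity $(Q u)_i+\bigl(\aa_i+\bb_i\int_{-\tau}^0 e^{\hat\aa\theta}v(\d\theta)\bigr)u_i=-\eta_2 u_i$ then yields, after taking expectations, $\mathbb{E}\Psi(t)\le e^{-(\eta_2-\vv)(t-T)}\mathbb{E}\Psi(T)$ for arbitrarily small $\vv>0$, hence exponential contraction of $\mathbb{E}[|X(t)-Y(t)|^2\mathbf{1}_{\{T\le t\}}]$ at effective rate $\eta_2$ after coalescence of the chains.

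On the complementary event $\{T>t\}$ the above contraction is unavailable; there I would bound $\mathbb{E}[\rr((X_t,\LL^i(t)),(Y_t,\LL^j(t)))\mathbf{1}_{\{T>t\}}]$ crudely using Cauchy--Schwarz, the moment bound from the first step, and the exponential tail \eqref{b22}. Optimizing the trade-off between the after-coalescence rate $\eta_2$ and the Markov-chain coupling rate $\theta$ in exactly the same way as in the proof of Theorem \ref{th4} produces the composite rate $\theta\eta_2/(2(\theta+\eta_2))$ claimed in \eqref{hui7}. The existence of $\mu\in\mathcal{P}_0$ and the bound \eqref{eq12} then follow from \eqref{hui7} by the standard Cauchy-sequence argument for $\{\dd_{(\xi,i)}P_t\}_{t\ge 0}$ in the complete metric space $(\mathcal{P}_0,W_\rr)$.

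The main obstacle, absent in the additive-noise case of Theorem \ref{th4}, is that under synchronous Brownian coupling the martingale $M$ does not vanish; (\textbf{H2}) is precisely what makes its quadratic variation integrable on compact intervals with a uniform-in-time first-moment estimate, so that localization yields a genuine rather than merely local bound on $\mathbb{E}\Psi(t)$. A secondary subtlety is the calibration of the memory weight $g$ against $\hat\aa$, which may be negative: the factor $e^{\hat\aa\theta}$ with $\theta\in[-\tau,0]$ can then exceed unity, and the delay term in the Itô inequality must be absorbed without eroding the rate $\eta_2$. Once these two points are managed, the remainder of the argument parallels the additive case almost verbatim.
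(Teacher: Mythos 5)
Your overall architecture — independent coupling of the chains, splitting at the coupling time $T$, Cauchy--Schwarz plus the tail \eqref{b22} on $\{T>\beta t\}$, optimization of $\beta$ to get the rate $\theta\eta_2/(2(\theta+\eta_2))$, and the Cauchy-sequence argument for the invariant measure — is the same as the paper's. The genuine gap is in the central contraction step after coalescence. Writing $\Gamma(t):=X(t)-Y(t)$, you invoke the Perron--Frobenius right eigenvector $u>0$ of $Q_2$ and the functional $\Psi(t)=u_{\LL(t)}|\Gamma(t)|^2+\int_{-\tau}^0 g(\theta)|\Gamma(t+\theta)|^2\,\d\theta$ with a \emph{deterministic} weight $g$, and assert that the identity $(Qu)_i+\big(\aa_i+\bb_i\int_{-\tau}^0\e^{\hat\aa\theta}v(\d\theta)\big)u_i=-\eta_2u_i$ ``yields'' $\E\Psi(t)\le\e^{-(\eta_2-\vv)(t-T)}\E\Psi(T)$. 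This step does not follow. It\^o's formula applied to $u_{\LL(t)}|\Gamma(t)|^2$ together with ({\bf H1}) produces the delay contribution $u_{\LL(t)}\bb_{\LL(t)}\int_{-\tau}^0|\Gamma(t+\theta)|^2v(\d\theta)$, whose coefficient is evaluated at the \emph{current} regime, whereas the negative term supplied by the eigenvalue identity is only $\bb_{\LL(t)}u_{\LL(t)}\int_{-\tau}^0\e^{\hat\aa\theta}v(\d\theta)\,|\Gamma(t)|^2$. A deterministic memory weight must store past values of $|\Gamma|^2$ before $\LL(t)$ is known, so to cancel the delay debt it has to majorize $\max_iu_i\bb_i$; differentiating $\Psi$ then injects a positive term of order $\max_iu_i\bb_i\,|\Gamma(t)|^2$ which is not dominated by $\big(\eta_2+\bb_{\LL(t)}\int_{-\tau}^0\e^{\hat\aa\theta}v(\d\theta)\big)u_{\LL(t)}|\Gamma(t)|^2$ in general: closing the inequality this way requires a per-regime, M-matrix-type condition strictly stronger than the spectral hypothesis $\eta_2>0$. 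The same obstruction appears if you take expectations first and try to close a system for $m_j(t)=\E\big[|\Gamma(t)|^2{\bf1}_{\{\LL(t)=j\}}\big]$: the delay term couples past values of $\Gamma$ with the current indicator and is not a function of $m_j(t+\theta)$. The ``calibration of $g$ against $\hat\aa$'' that you call a secondary subtlety is in fact the crux, and under ({\bf H1})--({\bf H2}) and $\eta_2>0$ alone this route does not go through; the same concern applies to your sketch of the moment bound via the eigenvector of $Q_2$.

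The paper circumvents exactly this point by a quenched argument (Lemmas \ref{th1} and \ref{le1}): fix the switching path $\omega_2$, take $\E_{\P_1}$ (which kills the martingale), apply Gronwall to get $\E_{\P_1}|\Gamma(t)|^2\le c\,\|\Gamma_0\|_\8^2\exp\big(\int_0^t(\aa_{\LL^{\omega_2}(s)}+\Gamma_4^{\omega_2,\vv}(s))\d s\big)$, then shift time in the delay integral so that the random discount factor is bounded by $\e^{\hat\aa\theta}$ and the coefficient becomes $\bb_{\LL(s)}$ at the shifted time, and only then average over the switching path using the exponential-functional estimate \cite[Proposition 4.1]{B10}, which is precisely where the spectral bound $\eta_2$ of $Q_2$ enters. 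If you want to salvage your plan, this conditioning-plus-time-shift mechanism (or an equivalent averaging over the switching) must replace the eigenvector functional. A further, smaller omission: \eqref{hui7} requires control of $\E\|X_t-Y_t\|_\8$ in the segment sup norm, while your $\Psi$ only controls $|\Gamma(t)|^2$ pointwise plus an integral over the window; even granting the contraction, you still need the BDG/({\bf H2}) upgrade to the sup norm, i.e.\ the analogue of \eqref{v7}.
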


Next, we provide an example to demonstrate Theorem \ref{th0}.

\begin{exa}\label{exa1}
{\rm Let $(\LL(t))_{t\ge0}$ be a Markov chain taking values in
$\mathbb{S}=\{1,2\}$ with the generator
\begin{equation}\label{T2}
Q= \left(\begin{array}{ccc}
  -1 & 1\\
  \gamma & -\gamma\\
  \end{array}
  \right)
  \end{equation}
for some constant $\gamma>0.$ Consider a scalar path-dependent OU
 process
\begin{equation}\label{eq11}
\d X(t)=\{a_{\LL(t)}X(t)+b_{\LL(t)}X(t-1)\}\d t+\si_{\LL_t}\d
W(t),~t>0,~(X_0,\LL(0))=(\xi,1)\in\C \times\mathbb{S},
\end{equation}
where
 $a_1,b_1,b_2>0,a_2<0$. Set $\aa:=2a_1+(1+\e^{-a_2})b_1$,
 $\bb:=2a_2+(1+\e^{-a_2})b_2$. For $\aa,\gg>0, \bb\in\R$ above, if
\begin{equation}\label{hui6}
\begin{cases}
\aa+\bb<1+\gg\\
\bb-\ff{\bb}{\aa}>\gg.
\end{cases}
\end{equation}
then $(X_t(\xi,i),\LL^i(t))$, determined by \eqref{eq11} and
\eqref{T2}, has a unique invariant probability measure, and
converges exponentially to the equilibrium.


}
\end{exa}

\subsection{Numerical Invariant Measures:  Additive
Noises}\label{sub3} In this subsection, we proceed to discuss
existence and uniqueness of invariant probability measures for the
time discretization of $(X_t(\xi,i_0),\LL^i(t))$, determined by
\eqref{h2} and \eqref{y1}, respectively, and investigate the
exponential ergodicity under the Wasserstein distance.

Without loss of generality, we assume the step size
$\dd=\ff{\tau}{M}\in(0,1)$ for some integer $M>\tau$. Consider the
following   EM scheme associated with \eqref{h2}
\begin{equation}\label{b0}
\d Y(t)=b(Y_{t_\dd},\LL({t_\dd}))\d t+\si(\LL({t_\dd}))\d
W(t),~~~~t>0
\end{equation}
with the initial condition $Y(\theta)=\xi(\theta)$ for
$\theta\in[-\tau,0]$ and $\LL(0)=i_0\in\S$, where,  $t_\dd:=\lfloor
t/\dd\rfloor\dd$ with $\lfloor t/\dd\rfloor$ being the integer part
of $t/\dd$, and $Y_{k\dd}=\{ Y_{k\dd}(\theta):-\tau\le\theta\le0\}$
is a $\C$-valued random variable defined as follows: for any
$\theta\in[i\dd,(i+1)\dd]$, $i=-M,-(M-1),\cdots,-1$,
\begin{equation}\label{w2}
Y_{k\dd}(\theta)=Y((k+i)\dd)+\ff{\theta-i\dd}{\dd}\{ Y((k+i+1)\dd)-
Y((k+i)\dd)\},~~~
\end{equation}
i.e., $Y_{k\dd}(\cdot)$ is the linear interpolation of
$Y((k-M)\dd)$, $Y((k-(M-1))\dd),\cdots,Y((k-1)\dd),Y(k\dd)$. Keep in
mind that the $\C$-valued random variables $X_t$ and $Y_{t_\dd}$ in
\eqref{eq1} and \eqref{b0}, respectively, are defined in a quite
different way. In order to emphasize the initial condition
$Y(\theta)=\xi(\theta)$ for $\theta\in[-\tau,0]$ and
$\LL(0)=i\in\S$, in some of the occasion, we shall write
$Y(t;\xi,i)$ and $Y_{t_\dd}(\xi,i)$ in lieu of $Y(t)$ and $Y_{t_\dd}
$, respectively. For latter purpose, we extend the initial value
$Y(\theta)=\xi(\theta), \theta\in[-\tau,0]$, of \eqref{b0} into the
interval $[-\tau-1,-\tau)$ by setting $Y(\theta)=\xi(-\tau)$ for any
$ \theta\in[-\tau-1,-\tau).$ Moreover, the pair
$(Y_{t_\dd}(\xi,i),\LL(t_\dd))$ enjoys the Markov property as Lemma
\ref{Markov} below shows. Let $P_{k\dd}^{(\dd)}((\xi,i),\cdot)$
stand for the transition kernel of $(Y_{k\dd}(\xi,i),\LL^i(k\dd)).$

To investigate the long-term behavior of $Y_{k\dd}$ defined by
\eqref{w2}, besides ({\bf A}), we further assume that there exists
an $L_0>0$ such that
\begin{equation}\label{w3}
|b(\xi,i)-b(\eta,i)|\le
L_0\|\xi-\eta\|_\8,~~~~~\xi,\eta\in\C,~~i\in\S.
\end{equation}

The theorem below shows that the discrete-time semigroup generated
by the discretization of $(X_t(\xi,i),\LL^i(t))$ admits a unique
invariant probability measure and is exponentially convergent to its
equilibrium under the Wasserstein distance.

\begin{thm}\label{th3}
{\rm Let the assumptions of Theorem \ref{th4} be satisfied and
suppose further \eqref{w3} holds. Then, there exist  $\dd_0\in(0,1)$
and $\aa>0$ such that for any $k\ge0$ and $\dd\in(0,\dd_0)$,
\begin{equation}\label{w9}
W_\rr(\nu_1 P_{k\dd}^{(\dd)},\nu_2  P_{k\dd}^{(\dd)})\le
c\,\Big(1+\sum_{i\in\S}\int_\C\|\xi\|_\8\nu_1(\d\xi,i)+\sum_{i\in\S}\int_\C\|\eta\|_\8\nu_2(\d\eta,i)\Big)\e^{-\aa
k\dd},
\end{equation}
in which  $\nu_1,\nu_2\in\mathcal {P}_0$. Furthermore, \eqref{w9} implies
that $(Y_{k\dd}(\xi,i),\LL^i(k\dd))$, ascertained by \eqref{b0} and
\eqref{y1}, admits a unique invariant probability measure
$\mu^{(\dd)}\in\mathcal {P}_0$ such that
\begin{equation*}
W_\rr(\dd_{(\xi,i)}P_{k\dd}^{(\dd)},\mu^{(\dd)} P_{k\dd}^{(\dd)})\le
c\,\Big(1+\|\xi\|_\8+\sum_{i\in\S}\int_\C\|\eta\|_\8\mu^{(\dd)}(\d\eta,i)\Big)\e^{-\aa
k\dd}.
\end{equation*}
}
\end{thm}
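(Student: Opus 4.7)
The plan is to run the argument of Theorem~\ref{th4} at the discrete-time level, compensating at each step for the freezing of the drift and of the Markov chain on intervals of length $\dd$. Fix two initial data $(\xi,i),(\eta,j)\in\mathbf{E}$, and let $(Y,\LL^i)$ and $(Y',\LL^j)$ be two solutions of \eqref{b0} sharing the same Brownian motion $W$, with $(\LL^i,\LL^j)$ evolving as the independent coupling of \eqref{b22} until the continuous coupling time $T=\inf\{t\ge0:\LL^i(t)=\LL^j(t)\}$ and synchronously thereafter, so that $\LL^i(t)=\LL^j(t)$ for all $t\ge T$ and, in particular, at every grid point $k\dd\ge T$. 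By Lemma~\ref{Markov}, the coupling inequality reduces \eqref{w9} to bounding $\E\|Y_{k\dd}-Y_{k\dd}'\|_\8+\P(\LL^i(k\dd)\neq\LL^j(k\dd))$ and then integrating $(\xi,i),(\eta,j)$ against $\nu_1\otimes\nu_2$.

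On the event $\{T\le k\dd\}$ the two chains coincide at every grid time after $T$. Applying Ito's formula to $|Y(t)-Y'(t)|^2$ (the $\si$-martingale parts cancel after $T$, since $\si=\si(\LL)$), inserting and subtracting $b(Y(t_\dd),\LL^i(t_\dd))-b(Y'(t_\dd),\LL^i(t_\dd))$, and using (\textbf{A}) together with the Lipschitz condition \eqref{w3} and the one-step control $\E|Y(t)-Y(t_\dd)|^2\le c\,\dd$, I would derive a Gronwall/Halanay-type inequality for $\E|Y(t)-Y'(t)|^2$ whose coefficients differ from $\aa_{\LL^i(t_\dd)}+\e^{-\hat\aa\tau}\bb_{\LL^i(t_\dd)}$ only by an $O(\dd)$ term. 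On the complement $\{T>k\dd\}$, the tail bound \eqref{b22} combined with a uniform-in-$k,\dd$ second moment bound $\E\|Y_{k\dd}\|_\8^2\le c(1+\|\xi\|_\8^2)$ (again via a Gronwall argument from (\textbf{A}) and the boundedness of $\si$ on $\S$) controls the contribution.

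The main obstacle, flagged already in the abstract, is quantifying the decay rate through an estimate on the exponential functional $\E\exp\{\int_0^{k\dd}(\aa_{\LL(s_\dd)}+\e^{-\hat\aa\tau}\bb_{\LL(s_\dd)})\,\d s\}$ of the time-discretized chain. The relevant quantity is the Perron--Frobenius eigenvalue of $\mathrm{diag}(\e^{\dd(\aa_i+\e^{-\hat\aa\tau}\bb_i)})\,\e^{\dd Q}$, whose expansion $I+\dd Q_1+O(\dd^2)$ together with continuity of the spectrum yields a discrete rate $\eta_1^{(\dd)}\to\eta_1>0$ as $\dd\downarrow0$. Choosing $\dd_0\in(0,1)$ so that $\eta_1^{(\dd)}\ge\eta_1/2$ on $(0,\dd_0)$ and optimizing between $\theta$ and $\eta_1^{(\dd)}$ exactly as in the denominator of \eqref{hui72} delivers \eqref{w9} with some $\aa>0$ uniform for $\dd\in(0,\dd_0)$. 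Existence and uniqueness of $\mu^{(\dd)}\in\mathcal{P}_0$ then follow by applying the Banach fixed-point theorem to $\nu\mapsto\nu P_{K\dd}^{(\dd)}$ on $(\mathcal{P}_0,W_\rr)$ for $K$ large enough that \eqref{w9} becomes a strict contraction, and the convergence estimate for $\mu^{(\dd)}$ is then obtained by taking $\nu_2=\mu^{(\dd)}$ in \eqref{w9}.
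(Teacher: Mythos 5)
Your proposal reproduces the architecture of the paper's proof: a pathwise contraction estimate for two EM solutions driven by the same chain (using ({\bf A}), \eqref{w3} and the exact one-step bound for the difference process), a uniform-in-$t,\dd$ second moment bound, the independent-then-synchronous coupling of the two chains with the split at the coupling time $T$ and the optimization between $\theta$ from \eqref{b22} and the contraction rate, and finally a fixed-point/Cauchy argument for existence and uniqueness of $\mu^{(\dd)}$ using the Markov property of $(Y_{k\dd},\LL(k\dd))$. Where you genuinely deviate is the key technical point, the exponential functional of the discretized chain: the paper's Lemma \ref{exp} compares $\E\exp\{\int_0^t K_{\LL(s_\dd)}\d s\}$ with the continuous-time functional via H\"older's inequality, estimates the correction interval by interval from \eqref{y1}, and then quotes the spectral bound of \cite{B10}; you instead observe that $\int_0^{k\dd}K_{\LL(s_\dd)}\d s=\dd\sum_{j<k}K_{\LL(j\dd)}$ and bound the expectation by the $k$-th power of the Perron--Frobenius root of $\mathrm{diag}(\e^{\dd K_i})\,\e^{\dd Q}=I+\dd Q_1+O(\dd^2)$, getting a rate $\eta_1^{(\dd)}\to\eta_1$. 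This is a legitimate and arguably more self-contained route (it is exact at grid times and avoids the $\vv$-interpolation of the paper), while the paper's Lemma \ref{exp} has the advantage of working for arbitrary $t$ and for any perturbed exponent without redoing spectral theory.

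Two points in your sketch need to be tightened. First, to convert the spectral radius bound into $\E_i[\cdot]\le c\,\lambda_{\rm PF}(M_\dd)^k$ with $c$ and $\dd_0$ \emph{uniform} in $\dd$, continuity of the spectrum alone is not enough, because $M_\dd\to I$ and the eigenvalue $1$ of $I$ is not simple; you should pass to $(M_\dd-I)/\dd\to Q_1$, use that the maximal-real-part eigenvalue $-\eta_1$ of the irreducible Metzler matrix $Q_1$ is simple with strictly positive eigenvector, and deduce that the Perron eigenvector of $M_\dd$ stays uniformly bounded away from $0$ for small $\dd$; this also absorbs the extra $O(\sqrt\dd)$ (or $O(\dd)$) perturbation of the exponent and the replacement of $\e^{-\hat\aa\tau}$ by $\e^{-\hat\aa(\tau+\dd)}$ coming from the Gronwall step. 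Second, after the coupling time $T$ the two chains coincide, but the scheme \eqref{b0} uses the frozen values $\LL^i(t_\dd),\LL^j(t_\dd)$, which agree only from the first grid point $\ge T$; so the martingale parts cancel and the contraction estimate applies only from $\lceil T/\dd\rceil\dd\le T+\dd$, and the conditioning should be performed at that grid time (where Lemma \ref{Markov} applies), with the extra piece of length at most $\dd$ handled by the moment bound. Both are minor repairs, not flaws in the approach.
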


\begin{rem}
{\rm By following the argument of \cite[Theorem 3.2]{BSY}, it
follows that
\begin{equation*}
\lim_{\dd\rightarrow0}W_\rr(\mu^{(\dd)},\mu)=0,
\end{equation*}
where $\mu\in\mathcal {P}_0$ is the invariant probability measure of
$(X_t(\xi,i),\LL^i(t))$,
 determined by \eqref{h2} and \eqref{y1}, and $\mu^{(\dd)}\in\mathcal
 {P}_0$ is the invariant probability measure of $(Y_{k\dd}(\xi,i),\LL^i(k\dd))$  solving   \eqref{b0} and
\eqref{y1}. }
\end{rem}

\subsection{Numerical Invariant Measures: Multiplicative
Noises}\label{sub4} In this subsection, we move forward to discuss
the multiplicative noise case. For this setting, we further assume
that there exists an $L_1>0$ such that
\begin{equation}\label{a7}
|b(\xi,i)-b(\eta,i)|^2\le
L_1\Big(|\xi(0)-\eta(0)|^2+\int_{-\tau}^0|\xi(\theta)-\eta(\theta)|^2v(\d\theta)\Big)
\end{equation}
for any $\xi,\eta\in\C$ and $i\in\S.$ Consider the EM scheme
corresponding to \eqref{eq1}
\begin{equation}\label{a1}
\d Y(t)=b(Y_{t_\dd},\LL({t_\dd}))\d t+\si(Y_{t_\dd},\LL({t_\dd}))\d
W(t),~~~~t>0,
\end{equation}
with the initial condition $Y(\theta)=\xi(\theta)$ for
$\theta\in[-\tau,0]$ and $\LL(0)=i_0\in\S$, where $Y_{t_\dd}$ is
defined exactly as in \eqref{w2}. Set
\[
Q_3:=Q+\mathrm{diag}\Big(\alpha_1+4\e^{-\hat{\alpha}\tau}\beta_1,\ldots,
\alpha_N+4\e^{-\hat{\alpha}\tau}\beta_N\Big),
\]and
\begin{equation}
\eta_3:=-\max_{\gamma\in\mathrm{spec}(Q_3)}\mathrm{Re}(\gamma).
\end{equation}

Concerning  the multiplicative noise case, the time discretization
of $(X_t(\xi,i),\LL^i(t))$, determined by \eqref{eq1} and
\eqref{y1}, also shares the exponentially ergodic property when the
stepsize is sufficiently small, which is presented below as
another main result of this paper.

\begin{thm}\label{th5}
{\rm Let ({\bf H1}), ({\bf H2}), and \eqref{a7} hold and assume
further $\eta_3>0$. Then, there exist  $\dd_0\in(0,1)$ and $\aa>0$
such that, for any $k\ge0$ and $\dd\in(0,\dd_0),$
\begin{equation}\label{w99}
W_\rr(\nu_1 P_{k\dd},\nu_2  P_{k\dd})\le
c\,\Big(1+\sum_{i\in\S}\int_\C\|\xi\|_\8\nu_1(\d\xi,i)+\sum_{i\in\S}\int_\C\|\eta\|_\8\nu_2(\d\eta,i)\Big)\e^{-\aa
k\dd},
\end{equation}
where $\nu_1,\nu_2\in\mathcal {P}_0$. Furthermore, \eqref{w99} implies
that $(Y_{k\dd}(\xi,i),\LL^i(k\dd))$, determined by \eqref{a1} and
\eqref{y1}, admits a unique invariant probability measure
$\mu^{(\dd)}\in\mathcal {P}_0$ such that
\begin{equation*}
W_\rr(\dd_{(\xi,i)}P_{k\dd},\mu^{(\dd)} P_{k\dd})\le
c\,\Big(1+\|\xi\|_\8+\sum_{i\in\S}\int_\C\|\eta\|_\8\mu^{(\dd)}(\d\eta,i)\Big)\e^{-\aa
k\dd}.
\end{equation*}
}
\end{thm}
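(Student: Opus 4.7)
The plan is to follow the proof of Theorem \ref{th3} while absorbing the extra contribution generated by the multiplicative noise $\si(Y_{t_\dd},\LL(t_\dd))\d W(t)$. Two copies $(Y^1(t),\LL^1(t))$ and $(Y^2(t),\LL^2(t))$ of the scheme \eqref{a1} are driven by the same Brownian motion $W$, with the Markov chains coupled independently up to the coupling time $T=\inf\{t\ge 0:\LL^1(t)=\LL^2(t)\}$ and synchronously afterwards, so that \eqref{b22} gives $\P(T>t)\le \e^{-\theta t}$.

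After time $T$ the two chains coincide; applying It\^o's formula to $|Y^1(t)-Y^2(t)|^2$ and invoking ({\bf H1}) at $t=t_\dd$ yields a differential inequality of the form
\[
\d|Y^1(t)-Y^2(t)|^2\le \Big\{\aa_{\LL(t_\dd)}|Y^1(t_\dd)-Y^2(t_\dd)|^2+\bb_{\LL(t_\dd)}\!\!\int_{-\tau}^0\!|Y^1_{t_\dd}(\theta)-Y^2_{t_\dd}(\theta)|^2 v(\d\theta)\Big\}\d t+\d M_t+R_\dd(t)\d t,
\]
with $M_t$ a local martingale and $R_\dd(t)$ an $O(\dd)$ residue controlled via \eqref{a7}, ({\bf H2}) and standard one-step EM moment bounds. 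Invoking the linear-interpolation structure \eqref{w2} together with elementary squared-sum inequalities bounds the delay integral by $4\,\e^{-\hat\aa\tau}$ times a weighted supremum of $|Y^1(s)-Y^2(s)|^2$ over $s\in[t-\tau-\dd,t]$; the factor $4$ accounts for the loss incurred by the piecewise-linear interpolation and, together with the $\aa_{\LL(t_\dd)}$ term, reproduces precisely the diagonal entries of $Q_3$.

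I would then multiply the inequality by an exponential functional $\exp(\int_0^t f_\dd(\LL(s_\dd))\d s)$ of the discretised chain, choosing $f_\dd$ to encode those diagonal entries modulo $O(\dd)$, and take expectations. This reduces matters to an estimate of the form
\[
\E\exp\Big(\int_0^t f_\dd(\LL(s_\dd))\d s\Big)\le c\,\e^{-\eta_3(\dd)\,t},\qquad \eta_3(\dd)\to \eta_3\text{ as }\dd\to 0,
\]
for the exponential functional of the discretised chain. This is the genuine obstacle flagged in the abstract: because $\LL(\cdot_\dd)$ is piecewise constant, the spectral identity $\eta_3=-\max_{\gg\in\mathrm{spec}(Q_3)}\mathrm{Re}(\gg)$ cannot be applied directly, and one must instead relate the one-step positive matrix $\e^{\dd\,\mathrm{diag}(\cdot)}\e^{\dd Q}$ to $\e^{\dd Q_3}$ and show, via a Perron-Frobenius / M-matrix analysis, that its spectral radius is $\e^{-\eta_3(\dd)\dd}(1+O(\dd))$; the exponential-functional estimate developed elsewhere in the paper supplies exactly this input.

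With both ingredients available, splitting on $\{T\le t/2\}$ and $\{T>t/2\}$, combining the post-$T$ contraction with coarse moment bounds on $(Y^1,Y^2)$ before the coupling time, and optimising the exponent as in Theorem \ref{th4} produce \eqref{w99} with rate $\aa(\dd)\to \ff{\theta\eta_3}{2(\theta+\eta_3)}$ as $\dd\to 0$; in particular $\aa(\dd)>0$ for all $\dd\in(0,\dd_0)$ once $\dd_0$ is small enough. Existence and uniqueness of the invariant measure $\mu^{(\dd)}\in\mathcal{P}_0$ then follow the classical contraction route: \eqref{w99} makes $(\dd_{(\xi,i)}P_{k\dd})_{k\ge 0}$ a Cauchy sequence in the complete metric space $(\mathcal{P}_0,W_\rr)$, its limit $\mu^{(\dd)}$ is $P_\dd$-invariant by continuity of the one-step kernel, and any competing invariant measure is excluded by another application of \eqref{w99}; the bound on $W_\rr(\dd_{(\xi,i)}P_{k\dd},\mu^{(\dd)}P_{k\dd})$ is the specialisation $\nu_1=\dd_{(\xi,i)}$, $\nu_2=\mu^{(\dd)}$.
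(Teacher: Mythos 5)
Your proposal follows essentially the same route as the paper: a contraction estimate derived from It\^o's formula, ({\bf H1}), ({\bf H2}), \eqref{a7} and the interpolation structure \eqref{w2} (producing exactly the diagonal $\aa_i+4\e^{-\hat\aa\tau}\bb_i$ of $Q_3$), reduced to the exponential-functional estimate for the discretely observed chain (the paper's Lemma \ref{exp}), combined with a uniform second-moment bound, the independent coupling of the chains with \eqref{b22}, optimization of the splitting time, and the standard Cauchy/contraction argument yielding existence and uniqueness of $\mu^{(\dd)}$. The only deviations are minor: you sketch a Perron--Frobenius/one-step-matrix derivation of the exponential-functional bound instead of the paper's H\"older comparison with the continuous-time functional (while still invoking the paper's estimate), and you leave implicit the homogeneous Markov property of $(Y_{k\dd},\LL(k\dd))$, which the paper verifies separately in Lemma \ref{Markov}.
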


The remainder of this paper is organized as follows. Section
\ref{sec2} is devoted to the proof  of Theorem \ref{th4}; Section
\ref{sec3} is concerned with the proofs of Theorem \ref{th0} and
Example \ref{exa1}; In Section \ref{sec4}, we aim to investigate the
estimate on exponential functional of the discrete observation for
the Markov chain involved and meanwhile finish the proof of Theorem
\ref{th3}; At length, we focus on the Markov property of time
discretization of $(X_t(\xi,i),\LL^i(t))$  and complete the proof of
Theorem \ref{th5}.

\section{Proof of Theorem \ref{th4}}\label{sec2}

%
%
%
%


Let
$$\OO_1=\{\omega|\ \omega:[0,\8)\rightarrow\R^m \mbox{ is continuous with
} \omega(0)=0\},$$ which is endowed with the locally uniform
convergence topology and the Wiener measure $\P_1$ so that the
coordinate process $W(t,\omega):=\omega(t)$, $t\ge 0$, is a  standard
$m$-dimensional Brownian motion. Set
\begin{equation*}
\OO_2:=\Big\{\omega\big|\ \omega: [0,\infty)\rightarrow \mathbf{S}  \
\text{is right continuous with left limit}\Big\},
\end{equation*}
endowed with Skorokhod topology and a probability
measure $\P_2$ so that the coordinate process $\Lambda(t,\omega)= \omega(t)$, $t\geq 0$, is a continuous time Markov chain with $Q$-matrix $(q_{ij})$.  Let
\begin{equation*}
(\OO,\F,\P)=(\OO_1\times\OO_2,\B(\OO_1)\times\B(\OO_2),\P_1\times\P_2).
\end{equation*}
Then, under $\P:=\!\P_1\!\times\! \P_2$, for
$\omega=(\omega_1,\omega_2)\in\OO$, $\omega_1(\cdot)$ is a Brownian
motion, and $\omega_2(\cdot)$ is a continuous time Markov chain with $Q$-matrix $(q_{ij})$ on $\mathbf{S}$.  Throughout this paper, we shall work on
the probability space $(\OO,\F,\P)$ constructed above.

The lemma below shows that, under suitable assumptions, the
functional solutions starting from different points will close in
the $L^2$-norm sense to each other when  time parameter goes to
infinity.
\begin{lem}\label{lem3.1}
{\rm Under the assumptions of Theorem \ref{th4},
\begin{equation}\label{b5}
\E\|X_t(\xi,i)-X_t(\eta,i)\|_\8^2\le c\,\|\xi-\eta\|^2_\8\e^{-\eta_1
t}
\end{equation}
for any $\xi,\eta\in\C$ and $i\in\S$,
 where $\eta_1>0$ is defined in \eqref{b3}.}
\end{lem}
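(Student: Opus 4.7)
The plan is to couple both copies synchronously (same Brownian motion $W$, same Markov chain $\LL^i$) so that the stochastic integrals cancel in the difference $Z(t):=X(t;\xi,i)-X(t;\eta,i)$, then to prove exponential decay by constructing a Lyapunov function from the Perron eigenvector of $Q_1$. Because $\si$ in \eqref{h2} is additive, $Z$ solves the random ODE $\dot Z(t)=b(X_t(\xi,i),\LL^i(t))-b(X_t(\eta,i),\LL^i(t))$ with $Z_0=\xi-\eta$, and assumption (A) gives pathwise
\begin{equation*}
\frac{d}{dt}|Z(t)|^2\le \aa_{\LL^i(t)}|Z(t)|^2+\bb_{\LL^i(t)}\|Z_t\|_\infty^2.
\end{equation*}

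Since $Q_1$ is irreducible and Metzler (its off-diagonal entries are the non-negative $q_{ij}$), Perron--Frobenius yields a strictly positive right eigenvector $h>0$ with $Q_1h=-\eta_1 h$. Setting $V(t):=\E[h(\LL(t))|Z(t)|^2]$ and combining the It\^o--Dynkin formula for the switching generator of $(|Z(t)|^2,\LL(t))$ with the eigenvector identity $(Qh)_k+(\aa_k+\e^{-\hat\aa\tau}\bb_k)h_k=-\eta_1 h_k$, one derives
\begin{equation*}
V'(t)\le -\eta_1 V(t)+\E\!\big[h(\LL(t))\bb_{\LL(t)}\big(\|Z_t\|_\infty^2-\e^{-\hat\aa\tau}|Z(t)|^2\big)\big].
\end{equation*}
A Perron row-sum estimate on $Q_1$ forces $\eta_1\le -\hat\aa$, so the hypothesis $\eta_1>0$ entails $\hat\aa<0$ and $\e^{-\hat\aa\tau}\ge \e^{\eta_1\tau}$, which is precisely the factor needed to absorb the sup-norm overshoot produced by the delay of length $\tau$.

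The main obstacle is that the remainder $\|Z_t\|_\infty^2-\e^{-\hat\aa\tau}|Z(t)|^2$ is not pointwise non-positive, so it must be controlled globally. The plan is to apply the Gronwall form of the pathwise inequality above over $[t-\tau,s]$ to bound $|Z(s)|^2$ for $s\in[t-\tau,t]$ by $|Z(t-\tau)|^2$ times a bounded exponential factor plus a Volterra remainder in $\|Z_r\|_\infty^2$, then substitute this into the inequality for $V$. The resulting Halanay-type delay inequality can be closed via Gronwall iteration on intervals of length $\tau$; the key inequality $\e^{\eta_1\tau}\le \e^{-\hat\aa\tau}$ ensures that the negative drift $-\eta_1 V$ dominates asymptotically and yields $V(t)\le c\|\xi-\eta\|_\infty^2\e^{-\eta_1 t}$. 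Since $\min_k h_k>0$, this gives $\E|Z(t)|^2\le c'\|\xi-\eta\|_\infty^2\e^{-\eta_1 t}$, and a final pathwise Gronwall estimate, exploiting that $Z$ is driven by a purely drift-type ODE, upgrades the pointwise-in-$t$ bound to the sup-norm bound \eqref{b5}.
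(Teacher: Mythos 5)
Your opening agrees with the paper's strategy (synchronous coupling in both $W$ and $\LL$, so the additive noise cancels and ({\bf A}) gives the pathwise inequality $\frac{d}{dt}|Z(t)|^2\le \aa_{\LL(t)}|Z(t)|^2+\bb_{\LL(t)}\|Z_t\|_\8^2$ for $Z(t)=X(t;\xi,i)-X(t;\eta,i)$), and your row-sum observation $\eta_1\le-\hat\aa$ is correct. The gap is at the decisive step: closing the delay inequality for $V(t)=\E[h(\LL(t))|Z(t)|^2]$. The remainder $\E\big[h(\LL(t))\bb_{\LL(t)}\big(\|Z_t\|_\8^2-\e^{-\hat\aa\tau}|Z(t)|^2\big)\big]$ cannot be absorbed by $-\eta_1V(t)$: assumption ({\bf A}) is one-sided, so $|Z(t)|$ may collapse far faster than any rate read off from $\hat\aa$, in which case $\|Z_t\|_\8^2\gg\e^{-\hat\aa\tau}|Z(t)|^2$ while $V(t)$ is tiny; the remainder is then only comparable to $\sup_{s\in[t-\tau,t]}V(s)$ with a coefficient of order $\check\bb\,\max_kh_k/\min_kh_k$, and your substitution of the window Gronwall bound merely replaces this by constants built from $\check\aa$, $\check\bb$, $\e^{\check\aa\tau}$. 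A Halanay-type inequality $V'\le-\eta_1V+C\sup_{[t-\tau,t]}V$ yields exponential decay only when $C<\eta_1$, and the hypothesis $\eta_1>0$ contains no such smallness: it permits regimes with $\aa_i+\e^{-\hat\aa\tau}\bb_i>0$ and arbitrarily large $\check\bb$, the contraction being produced only on average through the switching. The inequality $\e^{\eta_1\tau}\le\e^{-\hat\aa\tau}$ compares $\eta_1$ with $\hat\aa$, not with the constants that actually multiply the delayed term, so the asserted ``Gronwall iteration on intervals of length $\tau$'' does not close.

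The repair is to absorb the delay pathwise, conditionally on the chain, \emph{before} taking expectations — which is exactly what the paper does: with the chain path frozen, the elementary bound \eqref{x6}, namely $\e^{-\int_0^t\aa_{\LL(s)}\d s}\|Z_t\|^2_\8\le\e^{-\hat\aa\tau}\{\|Z_0\|^2_\8+\sup_{s\le t}\e^{-\int_0^s\aa_{\LL(r)}\d r}|Z(s)|^2\}$, followed by Gronwall, gives $\|Z_t\|^2_\8\le 2\e^{-\hat\aa\tau}\|\xi-\eta\|^2_\8\exp\big(\int_0^t(\aa_{\LL(s)}+\e^{-\hat\aa\tau}\bb_{\LL(s)})\d s\big)$, and the conclusion then follows from the exponential-functional estimate $\E\exp\big(\int_0^tK_{\LL(s)}\d s\big)\le c\,\e^{-\eta_1 t}$ with $K_i=\aa_i+\e^{-\hat\aa\tau}\bb_i$ (\cite[Proposition 4.1]{B10}). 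Your Perron eigenvector is precisely the right tool for that last step — $\E[h(\LL(t))\e^{\int_0^tK_{\LL(s)}\d s}]=h(i)\e^{-\eta_1t}$ by Feynman--Kac — so it should be applied to the exponential functional of the chain, not to $\E[h(\LL(t))|Z(t)|^2]$ with an unabsorbed sup-norm term. A secondary gloss: your final ``pathwise Gronwall upgrade'' from $\E|Z(t)|^2$ to $\E\|Z_t\|_\8^2$ is not automatic either, since window estimates only give $\|Z_t\|^2_\8\le C\|Z_{t-\tau}\|^2_\8$, which propagates segment bounds but does not convert a pointwise-in-$t$ bound into a segment bound; the paper avoids this by carrying $\|Z_t\|_\8$ through the entire argument.
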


\begin{proof}

For fixed $\omega_2\in\OO_2$,  consider the following SDE
\begin{equation*}
\d X^{\omega_2}(t) =b(X_t^{\omega_2},\LL^{\omega_2}(t))\d
t+\si(\LL^{\omega_2}(t))\d
\omega_1(t),~~t>0,~~X_0^{\omega_2}=\xi\in\C,~~\LL^{\omega_2}(0)=i\in\S.
\end{equation*}
 Since $(\LL^{\omega_2}(s))_{s\in[0,t]}$
may own finite number of jumps,
$t\mapsto\int_0^t\aa_{\LL^{\omega_2}(s)}\d s$ need not to be
differentiable. To overcome this drawback, let us introduce a smooth
approximation of it. For any $\vv\in(0,1)$, set
\begin{equation*}
\aa_{\LL^{\omega_2}(t)}^\vv:=\ff{1}{\vv}\int_t^{t+\vv}\aa_{\LL^{\omega_2}(s)}\d
s+\vv t=\int_0^1\aa_{\LL^{\omega_2}(\vv s+t)}\d s+\vv t.
\end{equation*}
Plainly, $t\mapsto\aa_{\LL^{\omega_2}(t)}^\vv$ is continuous  and
$\aa_{\LL^{\omega_2}(t)}^\vv\to \aa_{\LL^{\omega_2}(t)}$ as
$\vv\downarrow 0$ due to the right continuity of the path of
$\LL^{\omega_2}(\cdot)$. As a consequence,
$t\mapsto\int_0^t\aa_{\LL^{\omega_2}(r)}^\vv\d r$  is differentiable
by the first fundamental theorem of calculus and
$\int_0^t\aa_{\LL^{\omega_2}(r)}^\vv\d
r\to\int_0^t\aa_{\LL^{\omega_2}(r)}\d r$ as $\vv\downarrow0 $
according to Lebesgue's dominated convergence theorem. Let
\begin{equation}\label{x1}
\Gamma^{\omega_2}(t)=X^{\omega_2}(t;\xi,i)-X^{\omega_2}(t;\eta,i).
\end{equation}
 Applying
It\^o's formula and taking ({\bf A}) into account ensures that
\begin{equation}\label{x3}
\begin{split}
\e^{-\int_0^t\aa_{\LL^{\omega_2}(s)}^\vv\d
s}|\Gamma^{\omega_2}(t)|^2&=|\Gamma^{\omega_2}(0)|^2+\int_0^t\e^{-\int_0^s\aa_{\LL^{\omega_2}(r)}^\vv\d
r}\Big\{-\aa_{\LL^{\omega_2}(s)}^\vv|\Gamma^{\omega_2}(s)|^2\\
&\quad+2\<\Gamma^{\omega_2}(s),b(X_s^{\omega_2}(\xi,i),\LL^{\omega_2}(s))-b(X_s^{\omega_2}(\eta,i),\LL^{\omega_2}(s))\>\Big\}\d
s\\
&\le|\Gamma^{\omega_2}(0)|^2+\Gamma_1^{\omega_2,\vv}(t)+\int_0^t\bb_{\LL^{\omega_2}(s)}\e^{-\int_0^s\aa_{\LL^{\omega_2}(r)}^\vv\d
r}\|\Gamma^{\omega_2}_s\|_\8^2\d s,
\end{split}
\end{equation}
where
\begin{equation}\label{x2}
\Gamma_1^{\omega_2,\vv}(t):=\int_0^t\e^{-\int_0^s\aa_{\LL^{\omega_2}(r)}^\vv\d
r}|\aa_{\LL^{\omega_2}(s)}-\aa_{\LL^{\omega_2}(s)}^\vv|\cdot|\Gamma^{\omega_2}(s)|^2\d
s.
\end{equation}
Due to the fact that
\begin{equation}\label{x6}
\begin{split}
\e^{-\int_0^t\aa_{\LL^{\omega_2}(s)}^\vv\d
s}\|\Gamma^{\omega_2}_t\|^2_\8 \le
\e^{-\hat{\aa}\tau}\Big\{\,\|\Gamma^{\omega_2}_0\|_\8^2+\sup_{(t-\tau)\vee0\le
s\le t}\Big(\e^{-\int_0^s\aa_{\LL^{\omega_2}(r)}^\vv\d
r}|\Gamma^{\omega_2}(s)|^2\Big)\Big\},
\end{split}
\end{equation}
where $\hat\aa$ is defined in \eqref{q11}, we therefore infer from
\eqref{x3} that
\begin{equation*}
\begin{split}
\e^{-\int_0^t\aa_{\LL^{\omega_2}(s)}^\vv\d
s}\|\Gamma^{\omega_2}_t\|^2_\8\le\e^{-\hat{\aa}\tau}\Big\{2\,\|\Gamma^{\omega_2}_0\|_\8^2+\Gamma_1^{\omega_2,\vv}(t)+\int_0^t\bb_{\LL^{\omega_2}(s)}\e^{-\int_0^s\aa_{\LL^{\omega_2}(r)}^\vv\d
r}\|\Gamma^{\omega_2}_s\|_\8^2\d s\Big\}.
\end{split}
\end{equation*}
Since
$\aa_{\LL^{\omega_2}(s)}^\vv\rightarrow\aa_{\LL^{\omega_2}(s)}$ so
that $\Gamma_1^{\omega_2,\vv}(t)\rightarrow0$ as $\vv\rightarrow0$,
by taking $\vv\downarrow0$ one has
\begin{equation*}
\e^{-\int_0^t\aa_{\LL^{\omega_2}(s)}\d
s}\|\Gamma^{\omega_2}_t\|^2_\8\le\e^{-\hat{\aa}\tau}\Big\{2\,\|\Gamma^{\omega_2}_0\|_\8^2+\int_0^t\bb_{\LL^{\omega_2}(s)}\e^{-\int_0^s\aa_{\LL^{\omega_2}(r)}\d
r}\|\Gamma^{\omega_2}_s\|_\8^2\d s\Big\}.
\end{equation*}
Thus, employing Gronwall's inequality followed by taking expectation
w.r.t. $\P$ yields that
\begin{equation*}
\begin{split}
\E\|X_t(\xi,i_0)-X(\eta,i_0)\|^2_\8\le
2\,\|\xi-\eta\|^2_\8\E\,\e^{\int_0^t(\aa_{\LL(s)}+\e^{-\hat{\alpha}\tau}
\bb_{\LL(s)})\d s}.
\end{split}
\end{equation*}
Consequently, the desired assertion follows from \cite[Proposition
4.1]{B10} at once.
\end{proof}

The following lemma reveals that the functional solution is
uniformly bounded in the $L^2$-norm sense.

\begin{lem}\label{th2}
{\rm Under the assumptions of Theorem \ref{th4},
\begin{equation}\label{b4}
 \sup_{t\ge0}\E\|X_t(\xi,i)\|_\8^2\le
 c\,(1+\|\xi\|^2_\8),~~~~~(\xi,i)\in\C\times\S.
\end{equation}
}
\end{lem}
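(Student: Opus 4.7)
The plan is to adapt the proof of Lemma~\ref{lem3.1} to the trajectory $X^{\omega_2}(t)$ itself, taking $\eta\equiv0$. Compared with Lemma~\ref{lem3.1}, the It\^o expansion of $|X(t)|^2$ acquires three new pieces: the drift cross term $2\<X(t),b(0,\LL(t))\>$, the diffusion trace $\|\si(\LL(t))\|_{\rm HS}^2$, and a stochastic integral that does not cancel. Since $\S$ is finite and $b(0,\cdot),\si(\cdot)$ are continuous there, one has $|b(0,i)|+\|\si(i)\|_{\rm HS}\le C_0$ uniformly in $i\in\S$; this is the main mechanism for absorbing the new drift contributions into a bounded constant.

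Fix a small $\rho\in(0,\eta_1)$ and set $A^\vv(t):=\int_0^t\aa_{\LL(s)}^\vv\,\d s+\rho t$, where $\aa^\vv$ is the smoothing introduced in the proof of Lemma~\ref{lem3.1}. Applying It\^o's formula to $\e^{-A^\vv(t)}|X(t)|^2$, invoking ({\bf A}) with $\eta\equiv0$ on $2\<X(t),b(X_t,\LL(t))-b(0,\LL(t))\>$, and using Young's inequality $2\<X(t),b(0,\LL(t))\>\le\rho|X(t)|^2+\rho^{-1}|b(0,\LL(t))|^2$ to cancel the $\rho$-shift, then letting $\vv\downarrow0$ by dominated convergence, one obtains
\[
\e^{-A(t)}|X(t)|^2\le|\xi(0)|^2+\int_0^t\e^{-A(s)}\{\bb_{\LL(s)}\|X_s\|_\8^2+K\}\,\d s+N(t),
\]
where $A(t)=\int_0^t(\aa_{\LL(s)}+\rho)\,\d s$, $K$ depends only on $C_0$ and $\rho$, and $N(t)=\int_0^t2\e^{-A(s)}\<X(s),\si(\LL(s))\,\d W(s)\>$ is a local martingale. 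Combining this with~\eqref{x6} (with $\aa+\rho$ in place of $\aa^\vv$), taking conditional expectation $\E^{\omega_2}$ given the full path of $\LL$, and handling $N$ by BDG together with a localization at $\tau_n:=\inf\{t:|X(t)|>n\}$, the contribution of $\sup|N|$ can be bounded by $\tfrac12\E^{\omega_2}\sup_{r\le t}\e^{-A(r)}|X(r)|^2$ plus an integral of $\e^{-A}$. Absorbing the half into the left-hand side and applying Gronwall's inequality pathwise in $\omega_2$ then gives, for some $c$ independent of $t$ and $\omega_2$,
\[
\E^{\omega_2}\|X_t\|_\8^2\le c(1+\|\xi\|_\8^2)\,\e^{\int_0^t(\aa_{\LL(s)}+\rho+c\,\e^{-\hat\aa\tau}\bb_{\LL(s)})\,\d s}.
\]

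Finally, taking expectation over $\omega_2$ and invoking \cite[Proposition~4.1]{B10} applied to the perturbed generator $\widetilde Q_1:=Q+\mathrm{diag}(\aa_i+\rho+c\,\e^{-\hat\aa\tau}\bb_i)$ yields the uniform bound~\eqref{b4}, provided its spectral abscissa $\widetilde\eta_1$ is positive. The main obstacle is that the BDG absorption inflates the coefficient of $\bb_i$ in $\widetilde Q_1$ by a factor $c>1$, so that $\widetilde Q_1\neq Q_1$; because Lemma~\ref{th2} only demands uniform $L^2$-boundedness and not an optimal rate, one may take $\rho$ small and employ a sharper martingale estimate (working first with $\E^{\omega_2}|X(t)|^2$ without a supremum, then recovering the sup via a short-time increment bound) to ensure $\widetilde\eta_1>0$ under the hypothesis $\eta_1>0$.
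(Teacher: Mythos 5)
Your overall strategy is the paper's own: weight $|X^{\omega_2}(t)|^2$ by $\e^{-\int_0^t\aa_{\LL^{\omega_2}(s)}\d s}$ (after the smoothing in $\vv$), use ({\bf A}) at $\eta\equiv0$ together with boundedness of $b(0,i)$ and $\si(i)$ over the finite set $\S$ (this is \eqref{eq6}), control the stochastic integral by BDG, run Gronwall for fixed $\omega_2$, and conclude with \cite[Proposition 4.1]{B10}. The genuine gap is exactly the point you flag at the end, and the repair you sketch does not close it. If the martingale supremum is absorbed by Young's inequality at level $\tfrac12$, then after moving it to the left-hand side the Gronwall rate becomes $\aa_{\LL(s)}+\rho+c\,\e^{-\hat\aa\tau}\bb_{\LL(s)}$ with $c\ge 2$, and positivity of $-\max\mathrm{Re}\,\mathrm{spec}\bigl(Q+\mathrm{diag}(\aa_i+c\,\e^{-\hat\aa\tau}\bb_i)\bigr)$ simply does not follow from $\eta_1>0$; so the final invocation of \cite[Proposition 4.1]{B10} is unjustified as written. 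Your fallback --- ``work first with $\E^{\omega_2}|X(t)|^2$ without a supremum, then recover the sup'' --- cannot work under ({\bf A}): taking expectation kills the martingale, but the integrand still contains $\bb_{\LL(s)}\E_{\P_1}\|X^{\omega_2}_s\|_\8^2$, which is itself a running-supremum quantity, so the Gronwall cannot be closed in terms of $\E_{\P_1}|X^{\omega_2}(\cdot)|^2$ alone. That reduction is available only when the delay enters through the averaged term $\int_{-\tau}^0|\cdot|^2v(\d\theta)$ of ({\bf H1}) --- it is precisely how the paper proves the multiplicative-noise analogue, Lemma \ref{le1} --- and it is not available here.

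The correct fix is more elementary than a ``sharper martingale estimate'' and is what the paper does: perform the Young step inside the BDG bound with an arbitrarily small parameter $\kk\in(0,1)$ instead of $\tfrac12$, bounding the expected supremum of the stochastic integral over a window of length at most $\tau$ by $\kk\,\e^{\hat\aa\tau}\e^{-\int_0^t(\gg+\aa^{\vv}_{\LL^{\omega_2}(r)})\d r}\E_{\P_1}\|X^{\omega_2}_t\|_\8^2+c_{\kk}\,\e^{-\int_0^t(\gg+\aa^{\vv}_{\LL^{\omega_2}(r)})\d r}$ (cf.\ \eqref{b13}), and run Gronwall for the windowed quantity $\e^{-\int_0^t(\gg+\aa^{\vv}_{\LL^{\omega_2}(s)})\d s}\E_{\P_1}\|X^{\omega_2}_t\|_\8^2$ obtained via \eqref{x6} applied to $X^{\omega_2}$. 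Then the coefficient of $\bb_{\LL(s)}$ is $\ff{\e^{-\hat\aa\tau}}{1-\kk}$, the exponent after taking expectation in $\omega_2$ is $\int_0^t\bigl(\gg+\aa_{\LL(s)}+\ff{\e^{-\hat\aa\tau}}{1-\kk}\bb_{\LL(s)}\bigr)\d s$, and since the eigenvalues of $Q+\mathrm{diag}\bigl(\aa_i+\gg+\ff{\e^{-\hat\aa\tau}}{1-\kk}\bb_i\bigr)$ depend continuously on $(\gg,\kk)$ and reduce to those of $Q_1$ at $(0,0)$, the hypothesis $\eta_1>0$ makes \cite[Proposition 4.1]{B10} applicable for $\gg,\kk$ small, yielding \eqref{b4}. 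Two smaller points: your final pathwise bound should also retain the inhomogeneous term $\int_0^t\E\exp\bigl(\int_s^t(\cdots)\d u\bigr)\d s$ produced by the constant $c_\gg$ in \eqref{eq6} (a single factor $c(1+\|\xi\|_\8^2)\e^{\int_0^t(\cdots)\d s}$ drops it), this term being uniformly bounded in $t$ by the same spectral estimate as in \eqref{b1}; and the a priori finiteness $\E_{\P_1}\bigl(\sup_{0\le s\le t}\|X^{\omega_2}_s\|_\8^2\bigr)<\8$ (or your localization) is needed to legitimize the absorption into the left-hand side, as the paper notes.
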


\begin{proof}
Analogously, we define
\begin{equation*}
\bb_{\LL^{\omega_2}(t)}^\vv
=\ff{1}{\vv}\int_t^{t+\vv}\bb_{\LL^{\omega_2}(s)}\d s+\vv
t=\int_0^1\bb_{\LL^{\omega_2}(\vv s+t)}\d s+\vv t.
\end{equation*}
By virtue of  ({\bf A}), for any $\gg>0$, there is a
  $c_\gamma>0$  such that
\begin{equation}\label{eq6}
2\<\xi(0),b(\xi,i)\>+\|\si(i)\|_{\rm HS}^2\le c_\gamma+
(\aa_i+\gamma)|\xi(0)|^2+\bb_i\|\xi\|^2_\8.
\end{equation}
Employing   It\^o's formula and taking \eqref{eq6} into
consideration provides that
\begin{equation}\label{hui3}
\begin{split}
\e^{-\int_0^t(\gg+\aa_{\LL^{\omega_2}(s)}^\vv)\d
s}|X^{\omega_2}(t)|^2
&\le|\xi(0)|^2+c_\gg\int_0^t\e^{-\int_0^s(\gg+\aa_{\LL^{\omega_2}(r)}^\vv)\d
r}\d
s\\
&\quad+\int_0^t\bb_{\LL^{\omega_2}(s)}^\vv\e^{-\int_0^s(\gg+\aa_{\LL^{\omega_2}(r)}^\vv)\d
r} \|X^{\omega_2}_s\|^2_\8\d s+
\Gamma_2^{\omega_2,\vv}(t)+\Upsilon^{\omega_2,\vv}(t),
\end{split}
\end{equation}
where
\begin{equation}\label{x4}
\begin{split}
\Gamma_2^{\omega_2,\vv}(t):&=\int_0^t\e^{-\int_0^s(\gg+\aa_{\LL^{\omega_2}(r)}^\vv)\d
r}\Big\{|\aa_{\LL^{\omega_2}(s)}-\aa_{\LL^{\omega_2}(s)}^\vv|\cdot|X^{\omega_2}(s)|^2\\
&\quad+
|\bb_{\LL^{\omega_2}(s)}-\bb_{\LL^{\omega_2}(s)}^\vv|\cdot\|X^{\omega_2}_s\|^2_\8\Big\}\d
s,
\end{split}
\end{equation}
 and
\begin{equation*}
\Upsilon^{\omega_2,\vv}(t):=2\int_0^t\e^{-\int_0^s(\gg+\aa_{\LL^{\omega_2}(r)}^\vv)\d
r}\<X^{\omega_2}(s),\si(\LL^{\omega_2}(s))\d \omega_1(s)\>.
\end{equation*}
  For any $0\le s\le t$ with $t-s\leq \tau$ and
  $\kk\in(0,1)$,
exploiting BDG's inequality, we obtain that
\begin{equation}\label{b13}
\begin{split}
\E_{\P_1}\Big(\sup_{s\le r\le t}\Upsilon^{\omega_2,\vv}(r)\Big)
&=\E_{\P_1}\Upsilon^{\omega_2,\vv}(s)+\E_{\P_1}\Big(\sup_{s\le r\le
t}(\Upsilon^{\omega_2,\vv}(r)-\Upsilon^{\omega_2,\vv}(s))\Big)\\
&\le2\,\E_{\P_1}\Big(\sup_{s\le r\le
t}\Big|\int_s^r\e^{-\int_0^u(\gamma+\aa_{\LL^{\omega_2}(r)}^\vv)\d
r}\<X^{\omega_2}(u),\si(\LL^{\omega_2}(u))\d
\omega_1(u)\>\Big|\Big)\\
&\le c\,\E_{\P_1}\Big(
\int_s^t\e^{-2\int_0^u(\gamma+\aa_{\LL^{\omega_2}(r)}^\vv)\d
r}|X^{\omega_2}(u)|^2\cdot\|\si(\LL^{\omega_2}(u))\|_{\rm HS}^2\d
u\Big)^{1/2}\\
&\le c\,\e^{-\int_0^t(\gamma+\aa_{\LL^{\omega_2}(r)}^\vv)\d
r} \E_{\P_1}\Big(\|X^{\omega_2}_t\|^2_\8\int_s^t\e^{2\int_u^t(\gamma+\aa_{\LL^{\omega_2}(r)}^\vv)\d r}\d u\Big)^{1/2}\\
&\le c\,\e^{-\int_0^t(\gamma+\aa_{\LL^{\omega_2}(r)}^\vv)\d
r}\E_{\P_1}\|X^{\omega_2}_t\|_\8\\
&\le\kk\,\e^{\hat{\aa}\tau}\e^{-\int_0^t(\gamma+\aa_{\LL^{\omega_2}(r)}^\vv)\d
r}\E_{\P_1}\|X^{\omega_2}_t\|_\8^2+c\,\e^{-\int_0^t(\gamma+\aa_{\LL^{\omega_2}(r)}^\vv)\d
r}.
\end{split}
\end{equation}
This, together with   \eqref{x6} with $\Gamma^{\omega_2}$ being
replaced accordingly by $X^{\omega_2}$,     and \eqref{hui3},
 leads to
\begin{equation*}
\begin{split}
\e^{-\int_0^t(\gg+\aa_{\LL^{\omega_2}(s)}^\vv)\d
s}\E_{\P_1}\|X^{\omega_2}_t\|^2_\8
&\le\ff{\e^{-\hat{\aa}\tau}}{1-\kk}\Big\{2\,\|\xi\|^2_\8+c\int_0^t\e^{-\int_0^s(\gg+\aa_{\LL^{\omega_2}(r)}^\vv)\d
r}\d s+c\,\e^{-\int_0^t(\gg+\aa_{\LL^{\omega_2}(r)}^\vv)\d
r}\\
&\quad+\int_0^t\bb_{\LL^{\omega_2}(s)}^\vv\e^{-\int_0^s(\gg+\aa_{\LL^{\omega_2}(r)}^\vv)\d
r} \E_{\P_1}\|X^{\omega_2}_s\|^2_\8\d
s+\E_{\P_1}\Gamma_2^{\omega_2,\vv}(t)\Big\}.
\end{split}
\end{equation*}
Then, the application of Gronwall's inequality yields that
\begin{equation}\label{r3}
\begin{split}
&\e^{-\int_0^t(\gg+\aa_{\LL^{\omega_2}(s)}^\vv)\d
s}\E_{\P_1}\|X^{\omega_2}_t\|^2_\8\\
&\le
c\,\Big\{\|\xi\|^2_\8+\int_0^t\e^{-\int_0^s(\gg+\aa_{\LL^{\omega_2}(r)}^\vv)\d
r}\d s+\e^{-\int_0^t(\gg+\aa_{\LL^{\omega_2}(r)}^\vv)\d
r}+\E_{\P_1}\Gamma_2^{\omega_2,\vv}(t)\\
&\quad+\|\xi\|^2_\8\int_0^t\Gamma_3^{\omega_2,\vv}(s)\exp\Big(\int_s^t\Gamma_3^{\omega_2,\vv}(r)\d
r\Big)\d s\\
&\quad+\int_0^t\int_0^s\e^{-\int_0^u(\gg+\aa_{\LL^{\omega_2}(r)}^\vv)\d
r}\Gamma_3^{\omega_2,\vv}(s)\exp\Big(\int_s^t\Gamma_3^{\omega_2,\vv}(r)\d
r\Big)\d
u\d s\\
&\quad+\int_0^t\e^{-\int_0^s(\gg+\aa_{\LL^{\omega_2}(r)}^\vv)\d
r}\Gamma_3^{\omega_2,\vv}(s)\exp\Big(\int_s^t\Gamma_3^{\omega_2,\vv}(r)\d
r\Big)\d s\\
&\quad+\int_0^t\E_{\P_1}\Gamma_2^{\omega_2,\vv}(t)\Gamma_3^{\omega_2,\vv}(s)\exp\Big(\int_s^t\Gamma_3^{\omega_2,\vv}(r)\d
r\Big)\d s\Big\},
\end{split}
\end{equation}
where
\begin{equation*}
\Gamma_3^{\omega_2,\vv}(t)=\ff{\e^{-\hat{\aa}\tau}\bb_{\LL^{\omega_2}(t)}^\vv}{1-\kk}.
\end{equation*}
In the following, we go to estimate the terms in the right hand side of \eqref{r3}.
By integration by parts, one has
\begin{equation}\label{r1}
\begin{split}
& \|\xi\|^2_\8\int_0^t\Gamma_3^{\omega_2,\vv}(s)\exp\Big(\int_s^t\Gamma_3^{\omega_2,\vv}(r)\d
r\Big)\d s\\
&\quad+\int_0^t\int_0^s\e^{-\int_0^u(\gg+\aa_{\LL^{\omega_2}(r)}^\vv)\d
r}\Gamma_3^{\omega_2,\vv}(s)\exp\Big(\int_s^t\Gamma_3^{\omega_2,\vv}(r)\d
r\Big)\d
u\d s\\
&=\|\xi\|^2_\8\Big(\exp\Big(\int_0^t\Gamma_3^{\omega_2,\vv}(s)\d
s\Big)-1\Big)\\
&\quad+\int_0^t\e^{-\int_0^s(\gg+\aa_{\LL^{\omega_2}(r)}^\vv)\d
r}\Big(\exp\Big(\int_s^t\Gamma_3^{\omega_2,\vv}(r)\d r\Big)-1\Big)\d
s\\
&\le\|\xi\|^2_\8\exp\Big(\int_0^t\Gamma_3^{\omega_2,\vv}(s)\d
s\Big)\\
&\quad+\int_0^t\e^{-\int_0^s(\gg+\aa_{\LL^{\omega_2}(r)}^\vv)\d
r}\exp\Big(\int_s^t\Gamma_3^{\omega_2,\vv}(r)\d r\Big)\d s.
\end{split}
\end{equation}
On the other hand,
\begin{equation}\label{q3}
\begin{split}
&\int_0^t\e^{-\int_0^s(\gg+\aa_{\LL^{\omega_2}(r)}^\vv)\d
r}\Gamma_3^{\omega_2,\vv}(s)\exp\Big(\int_s^t\Gamma_3^{\omega_2,\vv}(r)\d
r\Big)\d s\\
&\le\ff{\e^{-\hat{\aa}\tau}\check\bb}{1-\kk}\int_0^t\e^{-\int_0^s(\gg+\aa_{\LL^{\omega_2}(r)}^\vv)\d
r}\exp\Big(\int_s^t\Gamma_3^{\omega_2,\vv}(r)\d r\Big)\d s.
\end{split}
\end{equation}
Under ({\bf A}), it is quite standard to show by using H\"older's
inequality and BDG's inequality that
\begin{equation*}
\E_{\P_1}\Big(\sup_{0\le s\le t}\|X^{\omega_2}_s\|^2_\8\Big)<\8.
\end{equation*}
So, the dominated convergence theorem implies that
\begin{equation}\label{r2}
\E_{\P_1}\Gamma_2^{\omega_2,\vv}(t)+\int_0^t\E_{\P_1}\Gamma_2^{\omega_2,\vv}(t)\Gamma_3^{\omega_2,\vv}(s)\exp\Big(\int_s^t\Gamma_3^{\omega_2,\vv}(r)\d
r\Big)\d s \rightarrow 0,\quad \ \text{as $\vv \downarrow 0$}.
\end{equation}
Whereafter,  taking \eqref{r1}-\eqref{r2}  into account and keeping
in mind that   $\aa_{\LL^{\omega_2}(t)}^\vv\to
\aa_{\LL^{\omega_2}(t)},~ \bb_{\LL^{\omega_2}(t)}^\vv\to
\bb_{\LL^{\omega_2}(t)}$ as $\vv\rightarrow0$, we deduce from
\eqref{r3} that
\begin{equation}\label{b1}
\begin{split}
\E\|X_t\|^2_\8
&\le c\,\|\xi\|_\8^2\,\E\exp\Big(\int_0^t\Big(\gg
+\aa_{\LL(s)}+\ff{\e^{-\hat{\aa}\tau}}{1-\kk}\bb_{\LL(s)}\Big)\d
s\Big)\\
&\quad + c\,\Big(1+ \int_0^t\E\exp\Big(\int_s^t\Big(\gg
+\aa_{\LL(u)}+\ff{\e^{-\hat{\aa}\tau}}{1-\kk}\bb_{\LL(u)}\Big)\d
u\Big)\d s\Big).
\end{split}
\end{equation}
Accordingly, as $\eta_1>0$, by \cite[Proposition 4.1]{B10}, we
obtain that for sufficiently small $\gamma,\,\kappa\in (0,1)$,
\[\sup_{t\ge0}\E\exp\Big(\int_0^t\Big(\gg
+\aa_{\LL(s)}+\ff{\e^{-\hat{\aa}\tau}}{1-\kk}\bb_{\LL(s)}\Big)\d
s\Big)<\infty,\]
\[ \sup_{t\ge0}\int_0^t\E\exp\Big(\int_s^t\Big(\gg
+\aa_{\LL(u)}+\ff{\e^{-\hat{\aa}\tau}}{1-\kk}\bb_{\LL(u)}\Big)\d
u\Big)\d s<\infty,\]
and hence \eqref{b4} holds.
\end{proof}

We now in position to complete the

\smallskip
\noindent{\bf Proof of Theorem \ref{th4}.} For $\bb\in(0,1)$ to be
determined, by H\"older's inequality, it follows that
\begin{equation*}
\begin{split}
W_\rr(\dd_{(\xi,i)}P_t,\dd_{(\eta,j)}P_t)&\le
\E\{\|X_t(\xi,i)-X_t(\eta,j)\|_\8+{\bf1}_{\{\LL^i(t)\neq\LL^j(t)\}}\}\\
&=
\E\{(\|X_t(\xi,i)-X_t(\eta,j)\|_\8+{\bf1}_{\{\LL^i(t)\neq\LL^j(t)\}}){\bf1}_{\{T\le\bb
t\}}\}\\
&\quad+\E\{(\|X_t(\xi,i)-X_t(\eta,j)\|_\8+{\bf1}_{\{\LL^i(t)\neq\LL^j(t)\}}){\bf1}_{\{T>\bb
t\}}\}\\
&\le \E({\bf1}_{\{T\le\bb
t\}}\E\{\|X_t(\xi,i)-X_t(\eta,j)\|_\8\}|\F_T)\\
&\quad+2\{1+\sqrt{2(\E\|X_t(\xi,i)\|^2_\8+\E\|X_t(\eta,j)\|_\8^2)}\}\sqrt{\P(T>\bb t)}\\
&\le c\,\E({\bf1}_{\{T\le\bb
t\}}\|X_T(\xi,i)-X_T(\eta,j)\|_\8\e^{-\ff{\eta_1}{2}
(t-T)})\\
&\quad+c\,(1+\|\xi\|_\8+\|\eta\|_\8)\e^{-\ff{1}{2}\theta\bb t}\\
&\le c\,(1+\|\xi\|_\8+\|\eta\|_\8)(\e^{-\ff{1}{2}\theta\bb
t}+\e^{-\ff{\eta_1}{2} (1-\bb)t}),
\end{split}
\end{equation*}
where in the last two steps we have used \eqref{b5} and \eqref{b4}. Optimizing over $\bb$ in
order to have $\theta\bb=\eta_1(1-\bb)$, i.e.,
$\bb=\ff{\eta_1}{\theta+\eta_1}$, leads to
\begin{equation}\label{hui0}
\begin{split}
W_\rr(\dd_{(\xi,i)}P_t,\dd_{(\eta,j)}P_t) &\le
c\,(1+\|\xi\|_\8+\|\eta\|_\8)\e^{-\ff{\theta\eta_1}{2(\theta+\eta_1)}t}.
\end{split}
\end{equation}
Thus, substituting \eqref{hui0} into
\begin{equation*}
W_\rr(\nu_1 P_t,\nu_2 P_t)\le\int
W_\rr(\dd_{(\xi,i)}P_t,\dd_{(\eta,j)}P_t)\pi(\d\xi\times\d\{i\},\d\eta\times\d\{j\})
\end{equation*}
yields the desired assertion \eqref{hui72}, where $\pi$ is a
coupling of $\nu_1$ and $\nu_2$.

Fix $\nu\in\mathcal {P}_0$ and observe  that $(\nu P_n)_{n\ge0}$ is a
Cauchy sequence under the Wasserstein distance $W_\rho$ due to \eqref{hui72} and that $\nu P_{n+1}=\nu
P_n P_1$. So, by letting $n\rightarrow\8$, there exists
$\nu_\8\in\mathcal {P}_0$ such that $\nu_\8P_1=\nu_\8$. Set
$\mu:=\int_0^1\nu_\8 P_s\d s$. It is easy to check $\mu\in \mathcal{P}_0$. In what follows, we claim  that $\mu$
 is indeed an invariant probability measure. In fact, for any $t>0$, note  that
\begin{equation*}
\begin{split}
\mu P_t&=\int_0^1\nu_\8 P_{s+t}\d
s=\int_t^{t+1}\nu_\8 P_s\d s\\
&=\int_t^0\nu_\8 P_s\d s+\int_0^1\nu_\8 P_s\d s+\int_0^t\nu_\8
P_1P_s\d s\\ &=\mu,
\end{split}
\end{equation*}
where in the last display we have used $\nu_\8 P_1=\nu_\8 $. Let
$\mu,\tilde\mu\in\mathcal {P}_0$ both be the invariant probability
measures of $(X_t(\xi,i),\LL^i(t))$. By the invariance, we deduce
from \eqref{hui72} that
\begin{equation}\label{eq15}
W_\rho(\mu,\tilde\mu)=W_\rho(\mu P_t,\tilde\mu P_t)\le
c\,\Big(1+\sum_{i\in\S}\int_\C\|\xi\|_\8\mu(\d\xi,i)+\sum_{i\in\S}\int_\C\|\eta\|_\8\tilde\mu(\d\eta,i)\Big)\e^{-\ff{\theta\eta_1}{2(\theta+\eta_1)}t}.
\end{equation}
Consequently, the uniqueness of invariant measure can be obtained
since the right hand side of \eqref{eq15} tends to zero as $t$ goes
to infinity. Finally, \eqref{t1} follows by just taking
$\nu_1=\dd_{(\xi,i)}$  and $\nu_2=\mu$ in \eqref{hui72}.

\section{Proof of Theorem \ref{th0}}\label{sec3}

With the aid of Lemmas \ref{th1} and \ref{le1} below, the argument
of Theorem \ref{th0} can be completed by repeating the procedure of
Theorem \ref{th4}.


\begin{lem}\label{th1}
{\rm Under the assumptions of Theorem \ref{th0}, it holds that
\begin{equation}\label{eq10}
\E\|X_t(\xi,i)-X_t(\eta,i)\|_\8^2\le c\,\|\xi-\eta\|^2_\8\e^{-\eta_2
t}
\end{equation}
for any $\xi,\eta\in\C$ and $i\in\S$, where $\eta_2>0$ is defined in
\eqref{q13}.

}
\end{lem}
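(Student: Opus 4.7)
The plan is to mimic the proof of Lemma \ref{lem3.1} but with two new ingredients forced on us by the multiplicative noise: a BDG-plus-absorption step (similar in spirit to that used in Lemma \ref{th2}) to handle the non-vanishing quadratic variation, and a careful change-of-variable in the time lag to produce the $\int_{-\tau}^0 e^{\hat{\aa}\theta}v(\d\theta)$ factor that appears inside $Q_2$.

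I would proceed as follows. First, fix $\omega_2\in\OO_2$, set $\Gamma^{\omega_2}(t):=X^{\omega_2}(t;\xi,i)-X^{\omega_2}(t;\eta,i)$, and reuse the smooth approximation $\aa^\vv_{\LL^{\omega_2}(\cdot)}$ from the proof of Lemma \ref{lem3.1}. Applying It\^o's formula to $t\mapsto \e^{-\int_0^t\aa^\vv_{\LL^{\omega_2}(s)}\d s}|\Gamma^{\omega_2}(t)|^2$ now generates three bulk contributions: the exponential weight term $-\aa^\vv_{\LL^{\omega_2}(s)}|\Gamma|^2$, the drift difference $2\<\Gamma,b(X_s,\LL)-b(\bar X_s,\LL)\>$, and the quadratic variation $\|\si(X_s,\LL)-\si(\bar X_s,\LL)\|_{\mathrm{HS}}^2$. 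The crucial point is that (\textbf{H1}) bounds the sum of the last two quantities directly by $\aa_{\LL(s)}|\Gamma(s)|^2+\bb_{\LL(s)}\int_{-\tau}^0|\Gamma(s+\theta)|^2 v(\d\theta)$, so after cancellation with the weight term, we obtain a clean pathwise estimate plus a stochastic-integral martingale $M^{\omega_2,\vv}(t)$ that no longer vanishes.

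Next I would take $\P_1$-expectation to kill the martingale, send $\vv\downarrow 0$ as before (via dominated convergence and the right-continuity of $\LL$), and pass to the sup on the left-hand side. For the sup I plan to use BDG combined with (\textbf{H2}) to control the martingale via $\E_{\P_1}\big(\int_0^t \e^{-2\int_0^s\aa_{\LL(r)}\d r}|\Gamma(s)|^2\,\|\si(X_s,\LL)-\si(\bar X_s,\LL)\|_{\mathrm{HS}}^2\d s\big)^{1/2}$, then apply Young's inequality with a small parameter $\kk\in(0,1)$ to absorb a $\kk\,\e^{\hat{\aa}\tau}\e^{-\int_0^t\aa_{\LL(r)}\d r}\E_{\P_1}\|\Gamma_t\|_\8^2$ term back into the left-hand side, exactly as in the proof of Lemma \ref{th2}. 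The result is a weighted estimate for $\E_{\P_1}\|\Gamma_t\|_\8^2$ in terms of $\|\xi-\eta\|_\8^2$ and the integral $\int_0^t\bb_{\LL(s)}\e^{-\int_0^s\aa_{\LL(r)}\d r}\int_{-\tau}^0 \E_{\P_1}|\Gamma(s+\theta)|^2 v(\d\theta)\d s$.

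The main technical obstacle, and the step that actually produces $\eta_2$ rather than $\eta_1$, is converting the integral $\int_{-\tau}^0 |\Gamma(s+\theta)|^2 v(\d\theta)$ into something compatible with Gronwall. For fixed $\theta\in[-\tau,0]$ I would write
\[
\e^{-\int_0^s\aa_{\LL(r)}\d r}|\Gamma(s+\theta)|^2
=\e^{-\int_{s+\theta}^s \aa_{\LL(r)}\d r}\cdot\e^{-\int_0^{s+\theta}\aa_{\LL(r)}\d r}|\Gamma(s+\theta)|^2
\le \e^{\hat{\aa}\theta}\,g(s+\theta),
\]
where $g(u):=\e^{-\int_0^u \aa_{\LL(r)}\d r}\|\Gamma_u\|_\8^2$ for $u\ge 0$ and equals $\|\xi-\eta\|_\8^2$ for $u\le 0$. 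Integrating against $v(\d\theta)$ yields a factor of $\int_{-\tau}^0 \e^{\hat{\aa}\theta}v(\d\theta)$, which is precisely the coefficient appearing in the definition of $Q_2$. Gronwall's inequality in the variable $g$ then gives, after undoing the weight,
\[
\|\Gamma_t\|_\8^2 \le c\,\|\xi-\eta\|_\8^2\exp\Big(\int_0^t\Big(\aa_{\LL(s)}+\bb_{\LL(s)}\int_{-\tau}^0\e^{\hat{\aa}\theta}v(\d\theta)\Big)\d s\Big).
\]
Finally I would take $\P_2$-expectation and invoke \cite[Proposition 4.1]{B10} with the matrix $Q_2$ to conclude $\E\,\e^{\int_0^t(\aa_{\LL(s)}+\bb_{\LL(s)}\int_{-\tau}^0\e^{\hat{\aa}\theta}v(\d\theta))\d s}\le c\,\e^{-\eta_2 t}$, which yields \eqref{eq10}. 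The BDG-absorption step and the $\e^{\hat{\aa}\theta}$ change-of-variable are the two places where genuinely new work is needed beyond the additive-noise argument.
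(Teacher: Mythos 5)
Your first half matches the paper's: the exponential weight, ({\bf H1}) to merge the drift and quadratic--variation terms, the bound $\e^{-\int_{s+\theta}^{s}\aa_{\LL(r)}\d r}\le \e^{\hat{\aa}\theta}$ to generate the factor $\int_{-\tau}^0\e^{\hat{\aa}\theta}v(\d\theta)$, and \cite[Proposition 4.1]{B10} applied to $Q_2$. The genuine gap is in your second ingredient: you run the Gronwall argument directly on the weighted segment norm $\E_{\P_1}\|\Gamma_t\|_\8^2$ and propose to kill the stochastic integral by BDG plus Young absorption ``exactly as in the proof of Lemma \ref{th2}''. In Lemma \ref{th2} that works because there $\si$ depends only on the state of the chain, so it is bounded and the Young remainder is a constant. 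Here ({\bf H2}) gives $\|\si(X_s(\xi,i),\LL(s))-\si(X_s(\eta,i),\LL(s))\|_{\rm HS}^2\le 2L\sup_{s-\tau\le u\le s}|\Gamma(u)|^2$, so the BDG term is of the form $c\,\E_{\P_1}\big(\sup(\cdots)\cdot\int_0^t(\cdots)\,\|\si(X_s(\xi,i),\LL(s))-\si(X_s(\eta,i),\LL(s))\|_{\rm HS}^2\d s\big)^{1/2}$, and Young's inequality leaves, besides the absorbable $\kk\,(\text{LHS})$ piece, a remainder $c_\kk L\int_0^t(\text{weighted }\|\Gamma_s\|_\8^2)\d s$ whose coefficient $c_\kk L$ cannot be made small, since absorption forces the coefficient of the sup to be less than one. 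This remainder enters the Gronwall exponent, so what you actually obtain is $\E\exp\big(\int_0^t\big(\aa_{\LL(s)}+\bb_{\LL(s)}\int_{-\tau}^0\e^{\hat{\aa}\theta}v(\d\theta)+O(L)\big)\d s\big)$, i.e.\ a rate governed by $Q_2+O(L)\,\mathrm{Id}$ rather than $Q_2$; the claimed rate $\eta_2$ is lost unless $L$ is small, which is not assumed. Your summary of the intermediate estimate (only the $\bb$-term surviving as a Gronwall term) silently drops this remainder.

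The paper circumvents this by a two-step argument. First it takes $\E_{\P_1}$ of the It\^o identity, so the martingale vanishes and no BDG is needed at all; the delay term is handled by the change of variables $s\mapsto s-\theta$, Gronwall closes on the pointwise quantity $\E_{\P_1}|\Gamma^{\omega_2}(t)|^2$, and one gets $\E|\Gamma(t)|^2\le c\,\e^{-\eta_2 t}\|\xi-\eta\|_\8^2$, that is \eqref{v1}, with the exponent dictated exactly by $Q_2$. Only afterwards is the segment norm recovered, through the local (window of length $\tau$) BDG estimate \eqref{v7}, $\E\big(\sup_{s\le r\le t}|\Gamma(r)|^2\big)\le c\big\{\E|\Gamma(s)|^2+\int_{s-\tau}^t\E|\Gamma(r)|^2\d r\big\}$, where the constants involving $L,\check{\aa},\check{\bb}$ only inflate the prefactor, not the decay rate. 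Your pointwise computation with the $\e^{\hat{\aa}\theta}$ bound essentially gives step one; what is missing in your plan is this second, local upgrade, in place of the global sup-norm Gronwall. A minor additional point: your ``Gronwall in the variable $g$'' has $g(s+\theta)$ on the right-hand side, so you would need the monotone envelope $\sup_{0\le u\le s}g(u)$ (or the paper's change of variables) before Gronwall can be invoked.
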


\begin{proof}
Fix $\omega_2\in\OO_2$ and let $(X^{\omega_2}(t))$ solve the SDE
\begin{equation*}
\d X^{\omega_2}(t) =b(X_t^{\omega_2},\LL^{\omega_2}(t))\d
t+\si(X^{\omega_2}_t,\LL^{\omega_2}(t))\d \omega_1(t), ~t>0,~
X_0^{\omega_2}=\xi\in\C,~  \LL^{\omega_2}(0)=i\in\S.
\end{equation*}
Let $\Gamma^{\omega_2}(t)$ and $\Gamma_1^{\omega_2,\vv}(t)$ be
defined as in \eqref{x1} and \eqref{x2}, respectively. By the It\^o
formula, we deduce from ({\bf H1}) that
\begin{equation}\label{c2}
\begin{split}
&\e^{-\int_0^t\aa_{\LL^{\omega_2}(s)}^\vv\d
s}\E_{\P_1}|\Gamma^{\omega_2}(t)|^2\\&=|\Gamma^{\omega_2}(0)|^2+\int_0^t\e^{-\int_0^s\aa_{\LL^{\omega_2}(r)}^\vv\d
r}\E_{\P_1}\Big\{-\aa_{\LL^{\omega_2}(s)}^\vv|\Gamma^{\omega_2}(s)|^2\\
&\quad+2\<\Gamma^{\omega_2}(s),b(X_s^{\omega_2}(\xi,i),\LL^{\omega_2}(s))-b(X_s^{\omega_2}(\eta,i),\LL^{\omega_2}(s))\>\\
&\quad+\|\si(X_s^{\omega_2}(\xi,i),\LL^{\omega_2}(s))-\si(X_s^{\omega_2}(\eta,i),\LL^{\omega_2}(s))\|^2_{\rm
HS}\Big\}\d
s\\
&\le|\Gamma^{\omega_2}(0)|^2+\E_{\P_1}\Gamma_1^{\omega_2,\vv}(t)+\int_0^t\bb_{\LL^{\omega_2}(s)}\e^{-\int_0^s\aa_{\LL^{\omega_2}(r)}^\vv\d
r}
\int_{-\tau}^0\E_{\P_1}|\Gamma^{\omega_2}(s+\theta)|^2v(\d\theta)\d
s\\
&\le
 c\,\|\Gamma^{\omega_2}_0\|_\8^2+\E_{\P_1}\Gamma_1^{\omega_2,\vv}(t)+\int_0^t\Gamma_4^{\omega_2,\vv}(s)\e^{-\int_0^s\aa_{\LL^{\omega_2}(r)}^\vv\d
r}\E_{\P_1}|\Gamma^{\omega_2}(s)|^2\d s,
\end{split}
\end{equation}
where in the last step we have   used the fact that
\begin{equation*}
\begin{split}
&\int_0^t\bb_{\LL^{\omega_2}(s)}\e^{-\int_0^s\aa_{\LL^{\omega_2}(r)}^\vv\d
r}
\int_{-\tau}^0\E_{\P_1}|\Gamma^{\omega_2}(s+\theta)|^2v(\d\theta)\d
s\\
&=\int_{-\tau}^0\int_{\theta}^{t+\theta}\bb_{\LL^{\omega_2}(s-\theta)}\e^{-\int_0^{s-\theta}\aa_{\LL^{\omega_2}(r)}^\vv\d
r} \E_{\P_1}|\Gamma^{\omega_2}(s)|^2\d sv(\d\theta)\\
&\le
c\,\|\Gamma^{\omega_2}_0\|_\8^2+\int_0^t\Gamma_4^{\omega_2,\vv}(s)\e^{-\int_0^s\aa_{\LL^{\omega_2}(r)}^\vv\d
r}\E_{\P_1}|\Gamma^{\omega_2}(s)|^2\d s
\end{split}
\end{equation*}
with
\begin{equation*}
\begin{split}
\Gamma^{\omega_2,\vv}_4(t):=\int_{-\tau}^0\bb_{\LL^{\omega_2}(t-\theta)}\e^{-\int_t^{t-\theta}\aa_{\LL^{\omega_2}(r)}^\vv\d
r}v(\d\theta).
\end{split}
\end{equation*}
Then, applying Gronwall's inequality yields that
\begin{equation}\label{b18}
\E_{\P_1}|\Gamma^{\omega_2}(t)|^2\le\{c\,\|\Gamma^{\omega_2}_0\|_\8^2+\E_{\P_1}\Gamma_1^{\omega_2,\vv}(t)\}\e^{\int_0^t(\aa_{\LL^{\omega_2}(s)}^\vv+\Gamma_4^{\omega_2,\vv}(s))\d
s}.
\end{equation}
Letting $\vv\rightarrow0$ followed by taking expectation w.r.t.
$\P_2$ on both sides of \eqref{b18}, together with
$\int_0^t\aa_{\LL^{\omega_2}(r)}^\vv\d
r\to\int_0^t\aa_{\LL^{\omega_2}(r)}\d r$ and
$\E_{\P_1}\Gamma_1^{\omega_2,\vv}(t)\rightarrow0$ as $\vv\downarrow0
$, gives that
\begin{equation}\label{v2}
\E|\Gamma(t)|^2\le
c\,\|\Gamma_0\|_\8^2\E\,\exp\Big(\int_0^t\Big(\aa_{\LL(s)}+\int_{-\tau}^0\bb_{\LL(s-\theta)}\e^{-\int_s^{s-\theta}\aa_{\LL(r)}\d
r}v(\d\theta)\Big)\d s\Big),
\end{equation}
where $\Gamma(t):=X(t;\xi,i)-X(t;\eta,i).$ It is readily to see that
\begin{align*}
\int_0^t\int_{-\tau}^0\bb_{\LL(s-\theta)}\e^{-\int_s^{s-\theta}\aa_{\LL(r)}\d
r}v(\d\theta)\d s&\le\int_{-\tau}^0\e^{\hat{\aa}\theta}
\int_{-\theta}^{t-\theta}\bb_{\LL(s)}\d sv(\d\theta)\\
&\le c+\int_{-\tau}^0\e^{\hat{\aa}\theta}v(\d\theta)
\int_0^t\bb_{\LL(s)}\d s.
\end{align*}
Inserting this into \eqref{v2}, one has
\begin{equation*}
\E|\Gamma(t)|^2\le
c\,\|\Gamma_0\|_\8^2\E\exp\Big(\int_0^t\Big(\aa_{\LL(s)}+\int_{-\tau}^0\e^{\hat{\aa}\theta}v(\d\theta)
\bb_{\LL(s)}\Big)\d s\Big).
\end{equation*}
According to \cite[Proposition 4.1]{B10}, we derive from  $\eta_2>0$
that
\begin{equation}\label{v1}
\E|\Gamma(t)|^2\le c\,\e^{-\eta_2t}\,\|\Gamma_0\|_\8^2.
\end{equation}
Next, for any $0\le s\le t,$ applying It\^o's formula and BDG's
inequality and making advantage of
 ({\bf H1}) and ({\bf H2}), we find that
\begin{equation*}
\begin{split}
\E\Big(\sup_{s\le r\le t}|\Gamma(r)|^2\Big) &\le
\E|\Gamma(s)|^2+(\check\aa+\check\bb)\int_{s-\tau}^t\E|\Gamma(r)|^2
\d r\\
&\quad+8\sqrt{2}\Big(\E\Big(\int_s^t|\Gamma(r)|^2\cdot\|\si(X_r(\xi,i),\LL(r))-\si(X_r(\eta,i),\LL(r))\|_{\rm HS}^2\d r\Big)^{1/2}\Big)\\
&\le \E|\Gamma(s)|^2+c\int_{s-\tau}^t\E|\Gamma(r)|^2 \d
r+\ff{1}{2}\E\Big(\sup_{s\le r\le t}|\Gamma(r)|^2\Big),
\end{split}
\end{equation*}
which further implies that
\begin{equation}\label{v7}
\begin{split}
\E\Big(\sup_{s\le r\le t}|\Gamma(r)|^2\Big) \le c\Big\{
\E|\Gamma(s)|^2+\int_{s-\tau}^t\E|\Gamma(r)|^2 \d r\Big\}.
\end{split}
\end{equation}
This  leads to \eqref{eq10} by using   \eqref{v1} and noting that
\begin{equation*}
\E\|\Gamma_t\|_\8^2=\E\Big(\sup_{t-\tau\le s\le
t}|\Gamma(s)|^2\Big)\le \|\xi\|_\8^2+\E\Big(\sup_{(t-\tau)\vee0\le
s\le t}|\Gamma(s)|^2\Big).
\end{equation*}
\end{proof}

\begin{lem}\label{le1}
{\rm Under the assumptions of Theorem \ref{th0},
\begin{equation}\label{eq4}
 \E\|X_t(\xi,i)\|_\8^2\le c\,(1+\|\xi\|^2_\8),~~~~(\xi,i)\in\C\times\S.
\end{equation}
}
\end{lem}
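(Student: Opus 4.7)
The proof of Lemma \ref{le1} should parallel that of Lemma \ref{th2}, adjusted to accommodate the multiplicative noise and the measure-weighted delay term appearing in (\textbf{H1}). As a first step I would fix $\omega_2\in\OO_2$, pass to the smooth approximations $\aa^\vv_{\LL^{\omega_2}(t)},\bb^\vv_{\LL^{\omega_2}(t)}$ introduced in the proof of Lemma \ref{th2}, and derive from (\textbf{H1}) (taking $\eta\equiv 0$ and absorbing the constants $b(0,i),\si(0,i)$ into a $\gg$-dependent constant $c_\gg$) the one-point inequality
$$
2\<\xi(0),b(\xi,i)\>+\|\si(\xi,i)\|_{\rm HS}^2\le c_\gg+(\aa_i+\gg)|\xi(0)|^2+\bb_i\int_{-\tau}^0|\xi(\theta)|^2 v(\d\theta).
$$

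Next I would apply It\^o's formula to the process $\e^{-\int_0^t(\gg+\aa^\vv_{\LL^{\omega_2}(s)})\d s}|X^{\omega_2}(t)|^2$ in order to cancel the leading exponential rate, and use the above inequality together with the change-of-variable argument already employed in Lemma \ref{th1} (exchanging $\int_0^t\!\int_{-\tau}^0$ and substituting $s'=s+\theta$) to convert the delayed quadratic $\int_{-\tau}^0|X^{\omega_2}(s+\theta)|^2 v(\d\theta)$ into a local quadratic weighted by
$$
\Gamma_4^{\omega_2,\vv}(s)=\int_{-\tau}^0\bb_{\LL^{\omega_2}(s-\theta)}\e^{-\int_s^{s-\theta}\aa^\vv_{\LL^{\omega_2}(r)}\d r}v(\d\theta),
$$
plus boundary terms controlled by $\|\xi\|_\8^2$. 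The stochastic integral coming from the multiplicative noise is handled by BDG exactly as in \eqref{b13}, absorbing a factor $\kk\,\e^{\hat\aa\tau}\e^{-\int_0^t\cdots}\E_{\P_1}\|X^{\omega_2}_t\|_\8^2$ to the left-hand side after taking $\sup_{s\le r\le t}$ and applying the supremum-bound \eqref{x6} with $\Gamma^{\omega_2}$ replaced by $X^{\omega_2}$. This step hinges on verifying $\E_{\P_1}\sup_{0\le s\le t}\|X^{\omega_2}_s\|_\8^2<\infty$, which follows by a standard H\"older/BDG argument under (\textbf{H1})--(\textbf{H2}).

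I would then invoke Gronwall's inequality to obtain a closed expression for $\e^{-\int_0^t(\gg+\aa^\vv_{\LL^{\omega_2}(s)})\d s}\E_{\P_1}\|X^{\omega_2}_t\|_\8^2$, send $\vv\downarrow 0$ by dominated convergence, and take $\E_{\P_2}$. Bounding $\int_s^{s-\theta}\aa_{\LL(r)}\d r\ge\hat\aa\theta$ (as in Lemma \ref{th1}) yields
$$
\E\|X_t\|_\8^2\le c\,(1+\|\xi\|_\8^2)\Big\{\E\exp\!\Big(\!\int_0^t\!\!\Big(\gg+\aa_{\LL(s)}+\tfrac{\e^{-\hat\aa\tau}}{1-\kk}\!\!\int_{-\tau}^0\!\!\e^{\hat\aa\theta}v(\d\theta)\,\bb_{\LL(s)}\Big)\d s\Big)+\text{integrated analogue}\Big\}.
$$
Since $\eta_2>0$ is exactly the spectral margin of $Q_2$ (without the $\gg,\kk$ perturbations), for $\gg,\kk$ sufficiently small the perturbed matrix still has strictly negative spectral abscissa, and \cite[Proposition 4.1]{B10} forces both exponential functionals to be uniformly bounded in $t$, giving \eqref{eq4}.

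The main obstacle I expect is the coupling of the BDG-absorption argument with the auxiliary $\kk$-splitting: one must choose $\gg$ and $\kk$ simultaneously small enough that $Q_2+\gg I+\tfrac{\e^{-\hat\aa\tau}}{1-\kk}\,\mathrm{diag}(\bb_i\!\int\!\e^{\hat\aa\theta}v(\d\theta))-\mathrm{diag}(\aa_i+\bb_i\!\int\!\e^{\hat\aa\theta}v(\d\theta))$ still has negative spectral abscissa, which is a continuity-in-perturbation statement but needs to be made quantitative. The change-of-order step that turns the $v$-weighted delay into $\Gamma_4^{\omega_2,\vv}$ is the other delicate point, since it must respect the exponential weight; this is where the bound $\int_s^{s-\theta}\aa_{\LL(r)}\d r\ge \hat\aa\theta$ becomes essential.
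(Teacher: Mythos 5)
Your plan transplants the proof of Lemma \ref{th2} (additive noise) wholesale, but the one step that does not survive the passage to multiplicative noise is exactly the BDG absorption ``as in \eqref{b13}''. In \eqref{b13} the post-BDG integrand is $|X^{\omega_2}(u)|^2\,\|\si(\LL^{\omega_2}(u))\|_{\rm HS}^2$ and $\|\si(\LL^{\omega_2}(u))\|_{\rm HS}$ is a \emph{constant} (finite state space); this is the only reason the martingale contributes $\kk\,\e^{\hat{\aa}\tau}\e^{-\int_0^t(\cdot)}\E_{\P_1}\|X^{\omega_2}_t\|_\8^2+c\,\e^{-\int_0^t(\cdot)}$ and the Gronwall rate stays at $\aa_i+\ff{\e^{-\hat{\aa}\tau}}{1-\kk}\bb_i$. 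Under ({\bf H2}) you only have $\|\si(X_u,\LL(u))\|_{\rm HS}^2\le c+2L\big(|X(u)|^2+\int_{-\tau}^0|X_u(\theta)|^2v(\d\theta)\big)$, so after BDG and the Cauchy--Schwarz/Young split with parameter $\kk$ the martingale no longer yields a harmless additive constant but an extra term of the form $\ff{c\,L}{\kk}\int_s^t\e^{-\int_0^u(\cdot)}\big(1+|X(u)|^2+\int_{-\tau}^0|X_u(\theta)|^2v(\d\theta)\big)\d u$, i.e.\ a \emph{non-small} multiple of the solution itself. This $O(L/\kk)$ term enters the Gronwall exponent, so the exponential functional you end up having to control is $\E\exp\int_0^t\big(\gg+\aa_{\LL(s)}+\ff{\e^{-\hat{\aa}\tau}}{1-\kk}\int_{-\tau}^0\e^{\hat{\aa}\theta}v(\d\theta)\,\bb_{\LL(s)}+c\,L/\kk\big)\d s$; no choice of $\gg,\kk\in(0,1)$ makes $c\,L/\kk$ small, and $\eta_2>0$ does not dominate it. Your continuity-in-perturbation remark only handles the genuinely small perturbations $\gg$ and $\kk/(1-\kk)$, not this $1/\kk$ blow-up, so the single-pass ``absorb into the left-hand side, then Gronwall on the segment norm'' scheme breaks down.

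The paper's own proof of Lemma \ref{le1} decouples the two difficulties. First it proves the pointwise bound $\sup_{t\ge0}\E|X(t)|^2\le c\,(1+\|\xi\|_\8^2)$ (display \eqref{eq8}) by taking $\E_{\P_1}$ immediately after It\^o's formula, exactly as in \eqref{c2}: the stochastic integral has zero mean, so no BDG constant ever enters the Gronwall rate, which remains the sharp $\gg$-perturbation of $\aa_i+\int_{-\tau}^0\e^{\hat{\aa}\theta}v(\d\theta)\,\bb_i$ (via $\Gamma_5^{\omega_2,\vv}$ and the same change-of-variables you describe), and \cite[Proposition 4.1]{B10} applies after choosing $\gg$ with $C_\gg=\eta_2/2$. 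Only afterwards does BDG appear, on windows of length at most $\tau$ (inequality \eqref{v8}, the analogue of \eqref{v7}), to upgrade the uniform pointwise bound to $\E\|X_t\|_\8^2$; there the BDG constants need not be small because they multiply quantities already known to be uniformly bounded. If you restructure your argument in this two-step way, the remaining ingredients of your proposal (the delay-to-local conversion, the bound $\e^{-\int_s^{s-\theta}\aa_{\LL(r)}\d r}\le\e^{\hat{\aa}\theta}$, and the use of \cite[Proposition 4.1]{B10}) do go through.
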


\begin{proof}
By virtue of ({\bf H1}), for any $\gg>0$, there exists a  $c_\gg>0$
such that
\begin{equation}\label{eq60}
2\<\xi(0),b(\xi,i)\>+\|\si(\xi,i)\|_{\rm HS}^2\le c_\gg+
(\gg+\aa_i)|\xi(0)|^2+(\gg+\bb_i)\int_{-\tau}^0|\xi(\theta)|^2v(\d\theta)
\end{equation}
holds. Next, following the argument to derive \eqref{c2} and  making
use of \eqref{eq60}, we infer that
\begin{equation*}
\begin{split}
\e^{-\int_0^t(\gg+\aa_{\LL^{\omega_2}(s)}^\vv)\d
s}\E_{\P_1}|X^{\omega_2}(t)|^2
&\le
 c \,\|\xi\|_\8^2+\E_{\P_1}\Gamma_2^{\omega_2,\vv}(t)+c \int_0^t\e^{-\int_0^s(\gg+\aa_{\LL^{\omega_2}(r)}^\vv)\d
r}\d
s\\
&\quad+\int_0^t\Gamma_5^{\omega_2,\vv}(s)\e^{-\int_0^s(\dd+\aa_{\LL^{\omega_2}(r)}^\vv)\d
r}\E_{\P_1}|X^{\omega_2}(s)|^2\d s,
\end{split}
\end{equation*}
 where $\Gamma_2^{\omega_2,\vv}(t)$ is defined as in \eqref{x4} with
 writing $\int_{[-\tau,0]}|X^{\omega_2}(s+\theta)|^2v(\d\theta)$
 in lieu of $X^{\omega_2}_s$,
and
\begin{equation*}
\begin{split}
\Gamma^{\omega_2,\vv}_5(t):=\int_{-\tau}^0(\gg+\bb_{\LL^{\omega_2}(t-\theta)}^\vv)\e^{-\int_t^{t-\theta}(\gg+\aa_{\LL^{\omega_2}(r)}^\vv)\d
r}v(\d\theta).
\end{split}
\end{equation*}
Subsequently, an application of Gronwall's inequality yields that
\begin{equation*}
\begin{split}
\E_{\P_1}|X^{\omega_2}(t)|^2&\le c
\,\|\xi\|_\8^2+\E_{\P_1}\Gamma_2^{\omega_2,\vv}(t)+c
\int_0^t\e^{-\int_0^s(\gg+\aa_{\LL^{\omega_2}(r)}^\vv)\d r}\d s\\
&\quad+\int_0^t\Big(c
\,\|\xi\|_\8^2+\E_{\P_1}\Gamma_2^{\omega_2,\vv}(s)+c
\int_0^s\e^{-\int_0^u(\gg+\aa_{\LL^{\omega_2}(r)}^\vv)\d r}\d
u\Big)\\
&\quad\times\Gamma^{\omega_2,\vv}_5(s)\exp\Big(\int_s^t\Gamma^{\omega_2,\vv}_5(r)\d
r\Big)\d s.
\end{split}
\end{equation*}
Thus, following the lines to derive \eqref{b1}, we arrive at
\begin{equation}\label{v6}
\begin{split}
\E|X(t)|^2&\le
c\,\|\xi\|_\8^2\E\,\e^{\int_0^t(\gg+\aa_{\LL(s)}+\Gamma_4(s))\d
s}+c\int_0^t\E \e^{\int_s^t(\gg+\aa_{\LL(u)}+\Gamma_4(u))\d u}\d s,
\end{split}
\end{equation}
where
\begin{equation*}
\begin{split}
\Gamma_4(t):=\int_{-\tau}^0(\gg+\bb_{\LL(t-\theta)})\e^{-\int_t^{t-\theta}(\gg+\aa_{\LL(r)})\d
r}v(\d\theta),~~~~t>0.
\end{split}
\end{equation*}
Plugging the fact that
\begin{align*}
\int_s^t\Gamma_4(r)\d
r
&\le  c+\int_{-\tau}^0\e^{\hat{\alpha}\theta}v(\d\theta)
\int_s^t(\gg+\bb_{\LL^{\omega_2}(r)})\d r
\end{align*}
into \eqref{v6}  means that
\begin{equation*}
\begin{split}
\E|X(t)|^2 &\le
c\,\|\xi\|_\8^2\E\exp\Big(\int_0^t\Big(C_\gg+\aa_{\LL(s)}+\int_{-\tau}^0\e^{\hat{\alpha}\theta}v(\d\theta)\bb_{\LL(s)}\Big)\d
s\Big)\\
&\quad+c\int_0^t\E\exp\Big(\int_s^t\Big(C_\gg+\aa_{\LL(r)}
+\int_{-\tau}^0\e^{\hat{\alpha}\theta}v(\d\theta)\bb_{\LL(r)}\Big)\d
r\Big)\d s,
\end{split}
\end{equation*}
where $
C_\gg:=\gg\Big(1+\int_{-\tau}^0\e^{\hat{\alpha}\theta}v(\d\theta)\Big).
 $ Thus, with the aid of    \cite[Proposition 4.1]{B10} and by choosing
$\gg>0$ such that $C_\gg=\eta_2/2,$ we obtain from  $\eta_2>0$ that
\begin{equation}\label{eq8}
\E|X(t)|^2 \le  c\,\|\xi\|_\8^2\e^{-\eta_2 t/2}+  c\,
\int_0^t\e^{-\ff{\eta_2}{2}s}\d s\le c\,(1+\|\xi\|_\8^2).
\end{equation}
  Carrying out an analogous manner to derive
\eqref{v7}, we have
\begin{equation}\label{v8}
\E\Big(\sup_{s\le r\le t}|X(r)|^2\Big) \le c\Big\{1+\|\xi\|_\8^2+
\E|X(s)|^2+\int_{s-\tau}^t\E|X(r)|^2 \d r\Big\}.
\end{equation}
Thereby, \eqref{eq4} is now available from \eqref{eq8} and
\eqref{v8}.
\end{proof}

\noindent{{\bf Proof of Example \ref{exa1}}.}  \eqref{eq11} can be
regarded as the interactions between the following path-dependent
diffusion processes
\begin{equation}\label{eq13}
\d  X^{(i)}(t)=\{a_i  X^{(i)}(t)+b_i X^{(i)}(t-1)\}\d t+\si_i\d
W(t),~~~t>0,\ \  X^{(i)}_0=\xi\in\C,\  \ i=1,2.
\end{equation}
The characteristic equation associated with the deterministic
counterpart (i.e., $\si_i=0$) of \eqref{eq13} is
\begin{equation*}
\ll_i -\int_{-1}^0\e^{\ll_i s}\mu_i(\d
s)=:\triangle_{\mu_i}(\ll_i)=0,\ \ \ \ \ \ \ \ i=1,2,
\end{equation*}
where  $\mu_i(\cdot):=a_i\dd_0(\cdot)+b_i\dd_{-1}(\cdot)$, where
$\dd_x(\cdot)$ signifies Dirac's delta measure or unit mass at the
point $x$. By the variation-of-constants formula (see, e.g.,
\cite[Theorem 1]{AWM}), \eqref{eq13} can be expressed respectively
as
\begin{equation*}
X^{(i)}(t)=\GG_i(t)\xi(0)+b_i\int_{-1}^0 \GG_i(t-1-s)\xi(s)\d
s+\int_0^t \GG_i(t-s)\si_i\d W(s),~~~t>0,\ \  i=1,2.
\end{equation*}
Herein, $\GG_i(t)$ is the solution to the delay equation
\begin{equation}
\d Z^{(i)}(t)=\{a_i  Z^{(i)}(t)+b_i Z^{(i)}(t-1)\}\d t,~~~t>0
\end{equation}
with the initial value $Z^{(i)}(0)=1$ and $ Z^{(i)}(\theta)=0,
\theta\in[-1,0)$. In general,  $\GG_i(t)$ is called the fundamental
solution of \eqref{eq13} with $\si_i=0$.  It is readily to see that
 $\triangle_{\mu_1}(\ll)=0$ has a unique positive root.
Thus $\GG_0(t)\to\8$ as $t\uparrow\8$ so that $\E|
X^{(0)}(t)|\rightarrow\8$; see, e.g., \cite{RRV}. Hence,
$(X^{(0)}_t)$ does not admit an invariant probability measure. The
invariant probability measure of $(\LL(t))_{t\ge0}$ is
\begin{equation*}
\pi=(\pi_0,\pi_1)=\Big(\ff{\gg}{1+\gg},\ff{1}{1+\gg}\Big).
\end{equation*}
Observe that
\begin{equation*}
\begin{split}
|Q_2-\ll E|&= \left|\begin{array}{ccc}
  \aa-1-\ll & 1\\
  \gamma & \bb-\gamma-\ll\\
  \end{array}
  \right|\\
  &=(\aa-1-\ll)(p\bb-\gamma-\ll)-\gg\\
  &=\ll^2-(\aa+\bb-1-\gg)\ll+\aa\bb-(\aa\gg+\bb).
\end{split}
  \end{equation*}
As we know,  the characteristic equation $|Q_2-\ll E|=0$ has two
negative roots, $\ll_1$ and $\ll_2$, if and only if
\begin{equation*}
\begin{cases}
\ll_1+\ll_2=\aa+\bb-1-\gg<0\\
\ll_1\ll_2=\aa\bb-(\aa\gg+\bb)>0.
\end{cases}
\end{equation*}
Nevertheless, the inequalities above hold under \eqref{hui6}.

\section{Proof of Theorem \ref{th3}}\label{sec4}

Before proving Theorem \ref{th3}, we present an estimate on the
exponential functional of the discrete-time observations of the
Markov chain. This lemma plays a crucial role in the analyzing the
long-time behavior of the time discretization for
$(X_t(\xi,i_0),\LL(t))$ and is of interest by itself.

\begin{lem}\label{exp}
{\rm Let
 $K:\S\rightarrow\R$, and
$\displaystyle
Q_K=Q+ \mathrm{diag}(K_1, \cdots,K_N)$. Set
 \begin{equation*}
 \eta_K=
-\max_{\gamma\in\mathrm{spec}{(Q_K)}}\mathrm{Re}(\gamma).
\end{equation*}  Then there exist $\delta_0\in (0,1)$ and $c>0$ such that, for any $\delta\in (0,\delta_0)$,
\begin{equation}
\E\,\e^{\int_0^tK_{\LL(s_\dd)}\d s}\le c\,\e^{-\eta_K t/2},\quad \ \forall\,t>0.
\end{equation}

}
\end{lem}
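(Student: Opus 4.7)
The idea is to reduce the discretely-sampled exponential functional to its continuous-time counterpart, for which the $Q$-matrix spectral estimate \cite[Proposition 4.1]{B10} (used repeatedly above) provides the bound $\mathbb{E}\,e^{\int_0^t K_{\Lambda(s)}\,ds}\le c\,e^{-\eta_K t}$ whenever $\eta_K>0$. Writing
$$\int_0^t K_{\Lambda(s_\delta)}\,ds = \int_0^t K_{\Lambda(s)}\,ds + R_\delta(t), \qquad R_\delta(t):=\int_0^t\bigl[K_{\Lambda(s_\delta)}-K_{\Lambda(s)}\bigr]\,ds,$$
the task reduces to an exponential-moment bound on the remainder $R_\delta(t)$.

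The pathwise estimate on $R_\delta$ comes from counting jumps of $\Lambda$. Let $N_t$ denote the number of jumps of $\Lambda$ in $[0,t]$. The integrand of $R_\delta$ vanishes on every subinterval $[k\delta,(k+1)\delta)$ on which $\Lambda$ is constant, so I expect $|R_\delta(t)|\le 2\|K\|_\infty\,\delta\,(N_t+1)$, the $+1$ absorbing the trailing fragment $[n\delta,t]$ with $n=\lfloor t/\delta\rfloor$. Because the holding rate of $\Lambda$ in any state is bounded by $q^*:=\max_i|q_{ii}|$, the counting process $N_t$ is stochastically dominated by a Poisson process of rate $q^*$, yielding $\mathbb{E}\,e^{\lambda N_t}\le \exp\bigl(q^* t(e^\lambda-1)\bigr)$ for every $\lambda\ge 0$.

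Next I apply H\"older's inequality with conjugate exponents $1+\varepsilon$ and $(1+\varepsilon)/\varepsilon$:
$$\mathbb{E}\,e^{\int_0^t K_{\Lambda(s_\delta)}\,ds}\le \Bigl(\mathbb{E}\,e^{(1+\varepsilon)\int_0^t K_{\Lambda(s)}\,ds}\Bigr)^{1/(1+\varepsilon)}\Bigl(\mathbb{E}\,e^{(1+\varepsilon)R_\delta(t)/\varepsilon}\Bigr)^{\varepsilon/(1+\varepsilon)}.$$
For the first factor I invoke \cite[Proposition 4.1]{B10} applied to the rescaled killing $(1+\varepsilon)K$, getting $c\,e^{-\eta_{(1+\varepsilon)K}\,t/(1+\varepsilon)}$. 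Since $Q_{(1+\varepsilon)K}=Q_K+\varepsilon\,\mathrm{diag}(K)$ and the eigenvalues of a finite matrix depend continuously on its entries, $\eta_{(1+\varepsilon)K}/(1+\varepsilon)\to\eta_K$ as $\varepsilon\downarrow 0$; I pick $\varepsilon>0$ small enough that this ratio exceeds $3\eta_K/4$. For the second factor the Poisson bound above, together with the pathwise estimate on $R_\delta$, gives
$$\Bigl(\mathbb{E}\,e^{(1+\varepsilon)R_\delta(t)/\varepsilon}\Bigr)^{\varepsilon/(1+\varepsilon)} \le e^{C(\varepsilon)\,\delta t+2\|K\|_\infty\delta}$$
once $\delta$ is small enough that $e^{2\|K\|_\infty(1+\varepsilon)\delta/\varepsilon}-1$ is comparable to $\delta$. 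Choosing $\delta_0$ so that $C(\varepsilon)\delta_0<\eta_K/4$ combines the two factors into the claimed bound $c\,e^{-\eta_K t/2}$.

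The main obstacle is the two-parameter limit: $\varepsilon$ has to be fixed first, independently of $\delta$, so that the first H\"older factor carries a decay rate close to $\eta_K$; only then can $\delta_0$ be shrunk relative to $\varepsilon$ so that the Poisson contribution $C(\varepsilon)\delta_0$ remains uniformly negligible as $t\to\infty$. This ordering is what allows a single constant $c$ to work for all $t>0$ and all $\delta\in(0,\delta_0)$, as required.
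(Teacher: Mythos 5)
Your proposal is correct, and it follows the paper's overall skeleton — the same H\"older splitting with exponents $1+\varepsilon$ and $(1+\varepsilon)/\varepsilon$, the same appeal to \cite[Proposition 4.1]{B10} (with a spectral-continuity argument in $\varepsilon$, which the paper leaves implicit in its ``$\varepsilon$ sufficiently small'') for the continuous-time factor, and the same two-parameter ordering: fix $\varepsilon$ first, then shrink $\delta_0=\delta_0(\varepsilon)$, which is exactly the point the paper also stresses so that the bound holds uniformly in $t$. Where you genuinely diverge is the treatment of the remainder $R_\delta(t)=\int_0^t\bigl[K_{\Lambda(s_\delta)}-K_{\Lambda(s)}\bigr]\,\mathrm{d}s$. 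The paper estimates $\mathbb{E}\exp\bigl(\tfrac{1+\varepsilon}{\varepsilon}R_\delta(t)\bigr)$ interval by interval: Jensen's inequality plus the infinitesimal transition asymptotics \eqref{y1} give a one-step conditional bound of the form $1+\tfrac{1}{2}\max_i(-q_{ii})\,\delta\,\mathrm{e}^{2(1+\varepsilon)\check K\delta/\varepsilon}+o(\delta)$, which is then iterated through the tower property and converted into $\exp\bigl(\tfrac{\varepsilon}{1+\varepsilon}\max_i(-q_{ii})(t+\delta)\bigr)$, i.e.\ a rate penalty of order $\varepsilon$. You instead bound $|R_\delta(t)|$ pathwise by $2\check K\delta(N_t+1)$, where $N_t$ is the jump count of $\Lambda$, and dominate $N_t$ by a Poisson process of rate $q^*=\max_i(-q_{ii})$ (uniformization), so the penalty in the exponent is of order $\delta$ once $\delta\lesssim\varepsilon$. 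Both routes work; yours avoids the $o(\delta)$ bookkeeping and the explicit telescoping of conditional expectations in the paper, at the price of invoking the Poisson-domination coupling (harmless here since the state space is finite and the rates bounded), while the paper's argument uses only the transition asymptotics \eqref{y1} directly. One small caveat common to both arguments: the stated decay $\mathrm{e}^{-\eta_K t/2}$ with the claimed choices requires $\eta_K>0$ (your requirement that $\eta_{(1+\varepsilon)K}/(1+\varepsilon)$ exceed $3\eta_K/4$ fails for $\eta_K\le 0$, just as the paper's final exponent does), which is the regime in which the lemma is actually applied, so this is not a gap relative to the paper.
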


\begin{proof}
By H\"older's inequality, it follows that
\begin{equation}\label{ap1}
\begin{split}
\E\,\e^{\int_0^tK_{\LL(s_\dd)}\d s}&=\E\,\e^{\int_0^tK_{\LL(s)}\d
s+\int_0^t(K_{\LL(s_\dd)}-K_{\LL(s)})\d s}\\
&\le\Big(\E\,\e^{(1+\vv)\int_0^tK_{\LL(s)}\d
s}\Big)^{\ff{1}{1+\vv}}\Big(\E\,\e^{\ff{1+\vv}{\vv}\int_0^t(K_{\LL(s_\dd)}-K_{\LL(s)})\d
s}\Big)^{\ff{\vv}{1+\vv}},~~~~~~\vv>0.
\end{split}
\end{equation}
Observe from \eqref{y1} that there exists $\dd_1\in(0,1)$ such that
for any $\3\in(0,\dd_1)$,
\begin{equation}\label{ap3}
\P(\LL(t+\triangle)=i|\LL(t)=i)=1+q_{ii}\3+o(\3),
\end{equation}
and that
\begin{equation}\label{ap4}
\P(\LL(t+\triangle)\neq i|\LL(t)=i)=\sum_{j\neq
i}(q_{ij}\3+o(\3))\le \max_{i\in\S}(-q_{ii})\3+o(\3).
\end{equation}
Utilizing Jensen's inequality and taking advantage of \eqref{ap3}
and \eqref{ap4}, we derive that for any $\dd\in(0,\dd_1),$
\begin{equation}\label{ap5}
\begin{split}
&\E\,\Big(\e^{\ff{1+\vv}{\vv}\int_{i\dd}^{(i+1)\dd\wedge
t}(K_{\LL(i\dd)}-K_{\LL(s)})\d s}\Big|\LL(i\dd)\Big)\\
&\le\ff{1}{(i+1)\dd\wedge t-i\dd}\int_{i\dd}^{(i+1)\dd\wedge
t}\E\,\Big(\e^{\ff{1+\vv}{\vv}((i+1)\dd\wedge
t-i\dd)(K_{\LL(i\dd)}-K_{\LL(s)})}\Big|\LL(i\dd)\Big)\d
s\\
&=\ff{\sum_{j\in \S}{\bf1}_{\{\LL(i\dd)=j\}}}{(i+1)\dd\wedge
t-i\dd}\int_{i\dd}^{(i+1)\dd\wedge
t}\E\,\Big(\e^{\ff{1+\vv}{\vv}((i+1)\dd\wedge
t-i\dd)(K_j-K_{\LL(s)})}\Big|\LL(i\dd)=j\Big)\d s\\
&=\ff{\sum_{j\in \S}{\bf1}_{\{\LL(i\dd)=j\}}}{(i+1)\dd\wedge
t-i\dd}\int_{i\dd}^{(i+1)\dd\wedge
t}\E\,({\bf1}_{\{\LL(s)=j\}}|\LL(i\dd)=j)\d
s\\
&\quad+\ff{\sum_{j\in \S}{\bf1}_{\{\LL(i\dd)=j\}}}{(i+1)\dd\wedge
t-i\dd}\int_{i\dd}^{(i+1)\dd\wedge
t}\E\,\Big(\e^{\ff{1+\vv}{\vv}((i+1)\dd\wedge
t-i\dd)(K_j-K_{\LL(s)})}{\bf1}_{\{\LL(s)\neq
j\}}\Big|\LL(i\dd)=j\Big)\d s\\
&\le\ff{\sum_{j\in \S}{\bf1}_{\{\LL(i\dd)=j\}}}{(i+1)\dd\wedge
t-i\dd}\int_{i\dd}^{(i+1)\dd\wedge
t}\E\,({\bf1}_{\{\LL(s)=j\}}|\LL(i\dd)=j)\d
s\\
&\quad+\e^{\ff{2(1+\vv)\check{K}\dd}{\vv}}\ff{\sum_{j\in
\S}{\bf1}_{\{\LL(i\dd)=j\}}}{(i+1)\dd\wedge
t-i\dd}\int_{i\dd}^{(i+1)\dd\wedge
t}\P\,({\bf1}_{\{\LL(s)\neq j\}}|\LL(i\dd)=j)\d s\\
&\le\ff{\sum_{j\in \S}{\bf1}_{\{\LL(i\dd)=j\}}}{(i+1)\dd\wedge
t-i\dd}\int_{i\dd}^{(i+1)\dd\wedge
t}(1+q_{jj}(s-i\dd)+o(s-i\dd))\d s\\
&\quad+\e^{\ff{2(1+\vv)\check{K}\dd}{\vv}}\ff{\sum_{j\in
\S}{\bf1}_{\{\LL(i\dd)=j\}}}{(i+1)\dd\wedge
t-i\dd}\int_{i\dd}^{(i+1)\dd\wedge
t}\Big(\max_{i\in\S}(-q_{ii})(s-i\dd)+o(s-i\dd)\Big)\d s\\
&\le1+\ff{\max_{i\in\S}(-q_{ii})}{2}\,\dd\,\e^{\ff{2(1+\vv)\check{K}\dd}{\vv}}+o(\dd),
\end{split}
\end{equation}
where $\check{K}:=\max_{i\in\S}|K_i|$. By the property of
conditional expectation, we deduce from \eqref{ap5} that
\begin{equation}\label{ap2}
\begin{split}
&\E\,\e^{\ff{1+\vv}{\vv}\int_0^t(K_{\LL(s_\dd)}-K_{\LL(s)})\d
s}\\
&=\E\,\e^{\ff{1+\vv}{\vv}\sum_{i=0}^{\lfloor
t/\dd\rfloor}\int_{i\dd}^{(i+1)\dd\wedge
t}(K_{\LL(i\dd)}-K_{\LL(s)})\d
s}\\&=\E\Big(\E\,\Big(\e^{\ff{1+\vv}{\vv}\sum_{i=0}^{\lfloor
t/\dd\rfloor}\int_{i\dd}^{(i+1)\dd\wedge
t}(K_{\LL(i\dd)}-K_{\LL(s)})\d s}\Big|\LL(t_\dd)\Big)\Big)\\
&=\E\Big(\e^{\ff{1+\vv}{\vv}\sum_{i=0}^{\lfloor
t/\dd\rfloor-1}\int_{i\dd}^{(i+1)\dd}(K_{\LL(i\dd)}-K_{\LL(s)})\d
s}\E\,\Big(\e^{\ff{1+\vv}{\vv}\int_{t_\dd}^{(t_\dd+\dd)\wedge
t}(K_{\LL(t_\dd)}-K_{\LL(s)})\d s}\Big|\LL(t_\dd)\Big)\Big)\\
&\le\Big(1+\ff{\max_{i\in\S}(-q_{ii})}{2}\,\dd\,\e^{\ff{2(1+\vv)\check{K}\dd}{\vv}}+o(\dd)\Big)\E\Big(\e^{\ff{1+\vv}{\vv}\sum_{i=0}^{\lfloor
t/\dd\rfloor-1}\int_{i\dd}^{(i+1)\dd}(K_{\LL(i\dd)}-K_{\LL(s)})\d
s}\Big)\\
&\le\Big(1+\ff{\max_{i\in\S}(-q_{ii})}{2}\,\dd\,\e^{\ff{2(1+\vv)\check{K}\dd}{\vv}}+o(\dd)\Big)^{\lfloor
t/\dd\rfloor+1},~~~~\dd\in(0,\dd_1),
\end{split}
\end{equation}
where $t_\dd:=\lfloor t/\dd\rfloor\dd.$ For any $c_1,c_2>0$,
\begin{equation*}
\lim_{\dd\rightarrow0}\ff{1}{\dd}\ln(1+c_1\dd\,\e^{c_2\dd}+o(\delta))=c_1.
\end{equation*}
So, there exists $\dd_2=\dd_2(c_1,c_2)\in(0,1)$ such that
\begin{equation}\label{ap9}
\ff{1}{\dd}\ln(1+c_1\dd\,\e^{c_2\dd})\le
2\,c_1,~~~~~\dd\in(0,\dd_2).
\end{equation}
Note that $\delta_2$ depends on $c_2$, and $\delta_2$ decreases as $c_2$ increasing.

According to \eqref{ap9}, there exists $\delta_2=\delta_2(\vv)$ so that for any $\dd\in(0,\dd_1\wedge\dd_2)$,
\begin{equation}\label{ap10}
\begin{split}
&\Big(1+\ff{\max_{i\in\S}(-q_{ii})}{2}\,\dd\,\e^{\ff{2(1+\vv)\check{K}\dd}{\vv}}+o(\dd)\Big)^{\ff{\vv(\lfloor
t/\dd\rfloor+1)}{1+\vv}}\\
&=\exp\Big(\ff{\vv(\lfloor
t/\dd\rfloor+1)}{1+\vv}\ln\Big(1+\ff{\max_{i\in\S}(-q_{ii})}{2}\,\dd\,\e^{\ff{2(1+\vv)\check{K}\dd}{\vv}}+o(\dd)\Big)\Big)\\
&\le\exp\Big(\ff{\vv (t+\dd)}{1+\vv}\ff{1}{\dd}\ln\Big(1+\ff{\max_{i\in\S}(-q_{ii})}{2}\,\dd\,\e^{\ff{2(1+\vv)\check{K}\dd}{\vv}}+o(\dd)\Big)\Big)\\
&\le\exp\Big(\ff{\vv}{1+\vv}
\max_{i\in\S}(-q_{ii})\Big)\exp\Big(\ff{\vv}{1+\vv}
\max_{i\in\S}(-q_{ii})t\Big).
\end{split}
\end{equation}
Taking \eqref{ap2} and \eqref{ap10} into consideration, we deduce
from \eqref{ap1} that
\begin{equation}\label{ap11}
\E\,\e^{\int_0^tK_{\LL(s_\dd)}\d s} \le\exp\Big(\ff{\vv}{1+\vv}
\max_{i\in\S}(-q_{ii})\Big)\Big(\E\,\e^{(1+\vv)\int_0^tK_{\LL(s)}\d
s}\Big)^{\ff{1}{1+\vv}}\exp\Big(\ff{\vv}{1+\vv}
\max_{i\in\S}(-q_{ii})t\Big).
\end{equation}
By virtue of \cite[Theorem 1.5 \& Proposition 4.1]{B10},  there exist    $\vv_0\in(0,1)$ sufficiently small
and $c>0$ such that
\begin{equation*}
\E\,\e^{(1+\vv)\int_0^tK_{\LL(s)}\d s}\le c\,\e^{-2\eta_K
t/3},~~~~~\vv\in(0,\vv_0),\  t>0.
\end{equation*}
Inserting this into \eqref{ap11}   yields that
\begin{equation*}
\E\,\e^{\int_0^tK_{\LL(s_\dd)}\d s} \le c^{\frac{1}{1+\vv}}\exp\Big(\ff{\vv}{1+\vv}
\max_{i\in\S}(-q_{ii})\Big)\exp\Big(-\ff{2\eta_K/3-
\max_{i\in\S}(-q_{ii})\vv}{1+\vv}t\Big),\quad t>0.
\end{equation*}
Thus, the desired assertion follows by  taking first $\vv$ small enough and then $\delta\in(0,\delta_1\wedge \delta_2(\vv))$.
\end{proof}

\begin{rem}
  The crucial point of   lemma \ref{exp} is that the choice of $\delta_0$ is independent of time $t$. Otherwise, it is easy to obtain a
  similar estimate for a time-dependent $\delta_0$ by using the dominated convergence theorem. Indeed,
  \[\lim_{\delta\rightarrow 0} \E \e^{\int_0^t K_{\LL(s(\dd))}\d s} =\E\e^{\int_0^t K_{\LL(s-)}\d s }=\E \e^{\int_0^t K_{\LL(s)}\d s }.\]
  Then, applying \cite[Theorem 1.5 \& Proposition 4.1]{B10} will yield the estimate for a given time $t$.
\end{rem}

Next, we provide two crucial lemmas on distance between the
$\C$-valued stochastic processes $Y_{t_\dd}$ starting from different
points and its uniform boundedness under some appropriate
assumptions.

\begin{lem}\label{lem4.1}
{\rm Let the assumptions of Theorem \ref{th4} be satisfied and
suppose further \eqref{w3} holds. Then, there exist $\dd_0\in(0,1)$
and $\aa>0$ such that
\begin{equation}\label{w6}
\E\|Y_{t_\dd}(\xi,i)-Y_{t_\dd}(\eta,i)\|_\8^2\le c\,\e^{-\aa
t}\|\xi-\eta\|^2_\8, ~~~~t\ge\tau+1, ~~\dd\in(0,\dd_0)
\end{equation}
for any $\xi,\eta\in\C$ and $i\in\S.$

}
\end{lem}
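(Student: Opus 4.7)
The strategy mirrors Lemma \ref{lem3.1}, except the drift in \eqref{b0} is frozen at $t_\delta$ and the Markov chain contribution must be handled through the time-discretized exponential functional bound of Lemma \ref{exp}. Set $\Gamma(t):=Y(t;\xi,i)-Y(t;\eta,i)$. Because the diffusion coefficient in \eqref{b0} depends only on $\LL(t_\delta)$, the stochastic integrals cancel and $\Gamma$ is differentiable in $t$ with
\[
\dot{\Gamma}(t)=B(t):=b(Y_{t_\dd}(\xi,i),\LL(t_\dd))-b(Y_{t_\dd}(\eta,i),\LL(t_\dd)),
\]
which is constant on each step interval $[k\dd,(k+1)\dd)$. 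Thus $\Gamma(t)=\Gamma(t_\dd)+(t-t_\dd)B(t)$ on such an interval.

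Following Lemma \ref{lem3.1}, I would introduce the smooth approximation $\aa^\vv_{\LL(s_\dd)}$ of $\aa_{\LL(s_\dd)}$ and apply the chain rule to $e^{-\int_0^t\aa^\vv_{\LL(s_\dd)}\d s}|\Gamma(t)|^2$. The key step is to decompose
\[
2\<\Gamma(t),B(t)\>=2\<\Gamma(t_\dd),B(t)\>+2(t-t_\dd)|B(t)|^2,
\]
bound the first term by (\textbf{A}) applied to $(Y_{t_\dd}(\xi,i),Y_{t_\dd}(\eta,i))$,
\[
2\<\Gamma(t_\dd),B(t)\>\le \aa_{\LL(t_\dd)}|\Gamma(t_\dd)|^2+\bb_{\LL(t_\dd)}\|Y_{t_\dd}(\xi,i)-Y_{t_\dd}(\eta,i)\|_\8^2,
\]
and bound the second term by $2\dd L_0^2\|Y_{t_\dd}(\xi,i)-Y_{t_\dd}(\eta,i)\|_\8^2$ via \eqref{w3}. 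An analogous $O(\dd)$ correction arises when replacing $\aa^\vv_{\LL(t_\dd)}|\Gamma(t)|^2$ by $\aa_{\LL(t_\dd)}|\Gamma(t_\dd)|^2$, since $||\Gamma(t)|^2-|\Gamma(t_\dd)|^2|\le c\,\dd\,\|Y_{t_\dd}(\xi,i)-Y_{t_\dd}(\eta,i)\|_\8^2$ by the same Lipschitz bound. Sending $\vv\downarrow 0$ and invoking the analog of \eqref{x6} (with $\LL(\cdot)$ replaced by $\LL(\cdot_\dd)$), which is immediate because $\|Y_{t_\dd}(\xi,i)-Y_{t_\dd}(\eta,i)\|_\8^2$ is the supremum of $|\Gamma|^2$ over nodes in $[t_\dd-\tau,t_\dd]$, one obtains an inequality of the form
\[
e^{-\int_0^t\aa_{\LL(s_\dd)}\d s}\|Y_{t_\dd}(\xi,i)-Y_{t_\dd}(\eta,i)\|_\8^2\le e^{-\hat{\aa}\tau}\Bigl(2\|\xi-\eta\|_\8^2+\int_0^t K^\dd_{\LL(s_\dd)}e^{-\int_0^s\aa_{\LL(r_\dd)}\d r}\|Y_{s_\dd}(\xi,i)-Y_{s_\dd}(\eta,i)\|_\8^2\d s\Bigr),
\]
with $K^\dd_i=\bb_i+c\,\dd$. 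Gronwall's inequality followed by taking expectation gives
\[
\E\|Y_{t_\dd}(\xi,i)-Y_{t_\dd}(\eta,i)\|_\8^2\le c\,\|\xi-\eta\|_\8^2\,\E\exp\Bigl(\int_0^t\bigl(\aa_{\LL(s_\dd)}+e^{-\hat{\aa}\tau}\bb_{\LL(s_\dd)}+c\,\dd\bigr)\d s\Bigr).
\]

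Finally, apply Lemma \ref{exp} with $K_i^\dd=\aa_i+e^{-\hat{\aa}\tau}\bb_i+c\,\dd$. Since $\eta_1>0$ by the hypothesis of Theorem \ref{th4}, continuity of the spectrum of $Q+\mathrm{diag}(K_1^\dd,\ldots,K_N^\dd)$ in the perturbation parameter ensures that $\eta_{K^\dd}\ge \eta_1/2>0$ for all sufficiently small $\dd$; hence Lemma \ref{exp} produces a constant $\dd_0\in(0,1)$, independent of $t$, and $\aa>0$ such that the exponential functional above is dominated by $c\,e^{-\aa t}$ for $\dd\in(0,\dd_0)$. Combined with the initial-window bound (which accounts for the restriction $t\ge\tau+1$ needed to ensure $t_\dd\ge\tau$ when $\dd<1$), this yields \eqref{w6}.

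\textbf{Main obstacle.} The delicate point is the two sources of $O(\dd)$ perturbation in the one-step inequality---namely the pointwise gap between $\Gamma(t)$ and $\Gamma(t_\dd)$ and the evaluation of $\aa,\bb$ at $\LL(t_\dd)$ rather than $\LL(t)$---which must be absorbed into a perturbed exponent $K^\dd$ whose spectral gap $\eta_{K^\dd}$ remains positive uniformly in $\dd<\dd_0$. Crucially, this forces the use of Lemma \ref{exp} (rather than \cite[Proposition 4.1]{B10}) because the relevant exponential is driven by $\LL(s_\dd)$; the $\dd$-uniform choice of $\dd_0$ in Lemma \ref{exp} is exactly what prevents the exponential-decay rate from degenerating as $t\to\8$.
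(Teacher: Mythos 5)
Your proposal is correct and follows essentially the same route as the paper's proof: a pathwise energy identity for $\e^{-\int_0^t\aa_{\LL(s_\dd)}\d s}|\Gamma(t)|^2$, the decomposition of the drift term at $t_\dd$ handled by ({\bf A}) together with \eqref{w3} to control the $O(\dd)$ (the paper gets $O(\sqrt{\dd})$) discretization errors, a segment-sup Gronwall argument, and finally Lemma \ref{exp} with a small-$\dd$ absorption of the perturbation so the decay rate stays positive uniformly in $t$. The only cosmetic differences are that the paper dispenses with the $\vv$-smoothing by proving the identity step-by-step over the grid intervals, and keeps the $\rho\sqrt{\dd}\,t$ perturbation outside the exponential functional rather than folding it into the exponent $K^\dd$.
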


\begin{proof}
Hereinafter, we assume that $t\ge\tau+1.$ Fix $\omega_2\in\OO_2$ and
let $(Y^{\omega_2}(t))$ solve the following SDE
\begin{equation*}
\d Y^{\omega_2}(t) =b(Y_{t_\dd}^{\omega_2},\LL^{\omega_2}(t_\dd))\d
t+\si(\LL^{\omega_2}(t_\dd))\d \omega_1(t)
\end{equation*}
with the initial value $Y^{\omega_2}(s)=\xi(s), s\in[-\tau,0],$ and
$\LL^{\omega_2}(0)=i\in\S.$ For notational simplicity, set
\begin{equation}\label{a0}
\Upsilon^{\omega_2}(t):=Y^{\omega_2}(t;\xi,i)-Y^{\omega_2}(t;\eta,i).
\end{equation}
First of all, we claim that
\begin{equation}\label{z1}
\begin{split}
\e^{-\int_0^t\aa_{\LL^{\omega_2}(s_\dd)}\d
s}|\Upsilon^{\omega_2}(t)|^2&=|\Upsilon^{\omega_2}(0)|^2+\int_0^t\e^{-\int_0^s\aa_{\LL^{\omega_2}(r_\dd)}\d
r}\Big\{-\aa_{\LL^{\omega_2}(s_\dd)}|\Upsilon^{\omega_2}(s)|^2\\
&\quad+2\<\Upsilon^{\omega_2}(s),b(Y_{s_\dd}^{\omega_2}(\xi,i),\LL^{\omega_2}(s_\dd))-b(Y_{s_\dd}^{\omega_2}(\eta,i),\LL^{\omega_2}(s_\dd))\>\Big\}\d
s.
\end{split}
\end{equation}
For any $t\in(0,\dd)$, by It\^o's formula, we have
\begin{equation*}
\begin{split}
\e^{-\int_0^t\aa_{\LL^{\omega_2}(s_\dd)}\d
s}|\Upsilon^{\omega_2}(t)|^2&=\e^{-\aa_{\LL^{\omega_2}(0)}t}|\Upsilon^{\omega_2}(t)|^2\\
&=|\Upsilon^{\omega_2}(0)|^2+\int_0^t\e^{-\aa_{\LL^{\omega_2}(0)}s}\Big\{-\aa_{\LL^{\omega_2}(0)}|\Upsilon^{\omega_2}(s)|^2\\
&\quad+2\<\Upsilon^{\omega_2}(s),b(Y_{0}^{\omega_2}(\xi,i),\LL^{\omega_2}(0))-b(Y_{0}^{\omega_2}(\eta,i),\LL^{\omega_2}(0))\>\Big\}\d
s\\
&=|\Upsilon^{\omega_2}(0)|^2+\int_0^t\e^{-\int_0^s\aa_{\LL^{\omega_2}(r_\dd)}\d
r}\Big\{-\aa_{\LL^{\omega_2}(s_\dd)}|\Upsilon^{\omega_2}(s)|^2\\
&\quad+2\<\Upsilon^{\omega_2}(s),b(Y_{s_\dd}^{\omega_2}(\xi,i),\LL^{\omega_2}(s_\dd))-b(Y_{s_\dd}^{\omega_2}(\eta,i),\LL^{\omega_2}(s_\dd))\>\Big\}\d
s.
\end{split}
\end{equation*}
Accordingly, \eqref{z1} holds for any $t\in[0,\dd].$ Next, we assume
that \eqref{z1} is true for any $t\in[(k-1)\dd,k\dd).$ For any
$t\in[k\dd,(k+1)\dd)$, It\^o's formula yields that
\begin{equation*}
\begin{split}
\e^{-\aa_{\LL^{\omega_2}(k\dd)}(t-k\dd)}|\Upsilon^{\omega_2}(t)|^2&=
|\Upsilon^{\omega_2}(k\dd)|^2+\int_{k\dd}^t\e^{-\aa_{\LL^{\omega_2}(k\dd)}(s-k\dd)}\Big\{-\aa_{\LL^{\omega_2}(k\dd)}|\Upsilon^{\omega_2}(s)|^2\d
s\\
&\quad+2\<\Upsilon^{\omega_2}(s),b(Y_{k\dd}^{\omega_2}(\xi,i),\LL^{\omega_2}(k\dd))-b(Y_{k\dd}^{\omega_2}(\eta,i),\LL^{\omega_2}(k\dd))\>\Big\}\d
s.
\end{split}
\end{equation*}
Multiplying both sides by
$\e^{-\int_0^{k\dd}(\gg+\aa_{\LL^{\omega_2}(s_\dd)})\d s}$ and
applying \eqref{z1} with $t=k\dd$ leads to
\begin{equation*}
\begin{split}
\e^{-\int_0^t\aa_{\LL^{\omega_2}(s_\dd)}\d
s}|\Upsilon^{\omega_2}(t)|^2&=\e^{-\int_0^{k\dd}(\gg+\aa_{\LL^{\omega_2}(s_\dd)})\d
s}
|\Upsilon^{\omega_2}(k\dd)|^2+\int_{k\dd}^t\e^{-\int_0^s\aa_{\LL^{\omega_2}(r_\dd)}\d
r}\Big\{-\aa_{\LL^{\omega_2}(s_\dd)}|\Upsilon^{\omega_2}(s)|^2\d
s\\
&\quad+2\<\Upsilon^{\omega_2}(s),b(Y_{s_\dd}^{\omega_2}(\xi,i),\LL^{\omega_2}(s_\dd))-b(Y_{s_\dd}^{\omega_2}(\eta,i),\LL^{\omega_2}(s_\dd))\>\Big\}\d
s\\
&=|\Upsilon^{\omega_2}(0)|^2+\int_0^t\e^{-\int_0^s\aa_{\LL^{\omega_2}(r_\dd)}\d
r}\Big\{-\aa_{\LL^{\omega_2}(s_\dd)}|\Upsilon^{\omega_2}(s)|^2\\
&\quad+2\<\Upsilon^{\omega_2}(s),b(Y_{s_\dd}^{\omega_2}(\xi,i),\LL^{\omega_2}(s_\dd))-b(Y_{s_\dd}^{\omega_2}(\eta,i),\LL^{\omega_2}(s_\dd))\>\Big\}\d
s.
\end{split}
\end{equation*}
Thereby, \eqref{z1} follows immediately. It is readily to see from
\eqref{w3} that
\begin{equation}\label{w5}
\begin{split}
|\Upsilon^{\omega_2}(t)-\Upsilon^{\omega_2}(t_\dd)|&=|b(Y_{t_\dd}^{\omega_2}(\xi,i),\LL^{\omega_2}(t_\dd))-b(Y_{t_\dd}^{\omega_2}(\eta,i),\LL^{\omega_2}(t_\dd))|\dd\\
&\le L_0\|\Upsilon^{\omega_2}_{t_\dd}\|_\8\dd.
\end{split}
\end{equation}
By virtue of \eqref{z1} and   ({\bf A}), it follows that
\begin{equation}\label{w0}
\begin{split}
&\e^{-\int_0^t\aa_{\LL^{\omega_2}(s_\dd)}\d
s}|\Upsilon^{\omega_2}(t)|^2\\&=|\Upsilon^{\omega_2}(0)|^2+\int_0^t\e^{-\int_0^s\aa_{\LL^{\omega_2}(r_\dd)}\d
r}\Big\{-\aa_{\LL^{\omega_2}(s_\dd)}|\Upsilon^{\omega_2}(s)|^2\\
&\quad+2\<\Upsilon^{\omega_2}(s),b(Y_{s_\dd}^{\omega_2}(\xi,i),\LL^{\omega_2}(s_\dd))-b(Y_{s_\dd}^{\omega_2}(\eta,i),\LL^{\omega_2}(s_\dd))\>\Big\}\d
s\\
&\le|\Upsilon^{\omega_2}(0)|^2+\int_0^t\e^{-\int_0^s\aa_{\LL^{\omega_2}(r_\dd)}\d
r}\Big\{\aa_{\LL^{\omega_2}(s_\dd)}(|\Upsilon^{\omega_2}(s_\dd)|^2-|\Upsilon^{\omega_2}(s)|^2)+\bb_{\LL^{\omega_2}(s_\dd)}\|\Upsilon^{\omega_2}_{s_\dd}\|^2_\8\\
&\quad+2\<\Upsilon^{\omega_2}(s)-\Upsilon^{\omega_2}(s_\dd),b(Y_{s_\dd}^{\omega_2}(\xi,i),\LL^{\omega_2}(s_\dd))-b(Y_{s_\dd}^{\omega_2}(\eta,i),\LL^{\omega_2}(s_\dd))\>\Big\}\d
s\\
&\le|\Upsilon^{\omega_2}(0)|^2+\int_0^t\e^{-\int_0^s\aa_{\LL^{\omega_2}(r_\dd)}\d
r}\Big\{\ff{1+2\check{\aa}}{\sqrt{\dd}}|\Upsilon^{\omega_2}(s)-\Upsilon^{\omega_2}(s_\dd)|^2\\
&\quad+(\check{\aa}\sqrt{\dd}+L_0^2\sqrt{\dd}+\bb_{\LL^{\omega_2}(s_\dd)})
\|\Upsilon_{s_\dd}^{\omega_2}\|^2_\8\Big\}\d s\\
&\le|\Upsilon^{\omega_2}(0)|^2+\int_0^t(\rr\sqrt{\dd}+\bb_{\LL^{\omega_2}(s_\dd)})
\e^{-\int_0^s\aa_{\LL^{\omega_2}(r_\dd)}\d
r}\|\Upsilon_{s_\dd}^{\omega_2}\|^2_\8\d s,
\end{split}
\end{equation}
where $\rho:=2(1+\check{a})L_0^2+\check{\aa}$, and in the penultimate
display we have used \eqref{w5}. Observe that
\begin{equation*}
\begin{split}
\Pi^{\omega_2}(t):&=\e^{-\int_0^t\aa_{\LL^{\omega_2}(s_\dd)}\d s}
\Big(\sup_{t-\tau-\dd\le s\le t}|\Upsilon^{\omega_2}(s)|^2
\Big)\\
&\le\e^{-\hat{\alpha}(\tau+\dd)} \Big(\sup_{t-\tau-\dd\le s\le
t}\Big(\e^{-\int_0^s\aa_{\LL^{\omega_2}(r_\dd)}\d
r}|\Upsilon^{\omega_2}(s)|^2\Big)\Big)
\end{split}
\end{equation*}
and that
\begin{equation*}
\|\Upsilon_{t_\dd}^{\omega_2}\|_\8=\sup_{-\tau\le\theta\le0}|\Upsilon_{t_\dd}^{\omega_2}(\theta;\xi,i)|=\max_{-N\le
i\le0}|\Upsilon^{\omega_2}(t_\dd+i\dd;\xi,i)|\le\sup_{t-\tau-\dd\le
s\le t}|\Upsilon^{\omega_2}(s)|,
\end{equation*}
by \eqref{w2}  and $Y(t)=\xi(-\tau)$ for any $ t\in[-\tau-1,-\tau)
$. We therefore obtain from \eqref{w0}  that
\begin{equation*}
\Pi^{\omega_2}(t)\le
c\,\|\Upsilon^{\omega_2}_0\|^2_\8+\e^{-\hat{\alpha}(\tau+\dd)}\int_0^t(\rho\sqrt{\dd}+\bb_{\LL^{\omega_2}(s_\dd)})
\Pi^{\omega_2}(s)\d s.
\end{equation*}
This, together with Gronwall's inequality, implies that
\begin{equation*}
\E\|Y_{t_\dd}(\xi,i_0)-Y_{t_\dd}(\eta,i_0)\|_\8^2\le
c\,\|\xi-\eta\|^2_\8\e^{\e^{-\hat{\alpha}(\tau+\dd)}\rho\sqrt{\dd} t}
\E\,\e^{\int_0^t(\aa_{\LL(s_\dd)}+\e^{-\hat{\alpha}(\tau+\dd)}\bb_{\LL(s_\dd)})\d
s}.
\end{equation*}
Thus, according to Lemma \ref{exp}, it holds
\begin{equation*}
\E\|Y_{t_\dd}(\xi,i_0)-Y_{t_\dd}(\eta,i_0)\|_\8^2\le
c\,\|\xi-\eta\|^2_\8\e^{\e^{-\hat{\alpha}(\tau+\dd)}\rho\sqrt{\dd} t}
\e^{-2\eta_1t/3}
\end{equation*}
for $\dd\in(0,\dd_1)$ with some $\dd_1\in(0,1)$ sufficiently small.
As $\eta_1>0$, there exists $\dd_2\in(0,\dd_1)$ such that
$\e^{-\hat{\alpha}(\tau+\dd)}\rho\sqrt{\dd}<\eta_1$ for
$\dd\in(0,\dd_2)$. As a consequence, \eqref{w6} holds for
$\dd\in(0,\dd_2)$.
\end{proof}

\begin{lem}\label{lem4.2}
{\rm Under the assumptions of Lemma \ref{lem4.1}, there exists some
$\dd_0\in(0,1) $ such that
\begin{equation}\label{b2}
\E\|Y_{t_\dd}(\xi,i)\|_\8^2\le c\, (1+\|\xi\|^2_\8), ~~~~t\ge\tau+1,
~~\dd\in(0,\dd_0)
\end{equation}
for any $(\xi,i)\in\C\times\S.$

}
\end{lem}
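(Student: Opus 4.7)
The plan is to adapt the proof of Lemma \ref{th2} to the Euler--Maruyama scheme \eqref{b0}, in precisely the same way that Lemma \ref{lem4.1} is the discrete counterpart of Lemma \ref{lem3.1}. The principal simplification over the continuous case is that, for each fixed path $\omega_2\in\OO_2$, the function $s\mapsto\aa_{\LL^{\omega_2}(s_\dd)}$ is piecewise constant on the grid $\{k\dd\}$, so the integral $\int_0^t\aa_{\LL^{\omega_2}(s_\dd)}\d s$ is genuinely piecewise linear and the smooth-approximation device $\aa^\vv$ used in Lemma \ref{th2} is unnecessary.

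First I would fix $\omega_2\in\OO_2$ and apply It\^o's formula to $\e^{-\int_0^t(\gg+\aa_{\LL^{\omega_2}(s_\dd)})\d s}|Y^{\omega_2}(t)|^2$ on each subinterval $[k\dd,(k+1)\dd)$, splicing the pieces together exactly as in the derivation of \eqref{z1}. To exploit assumption ({\bf A}) in the dissipative form \eqref{eq6}, I would decompose
\[
2\<Y^{\omega_2}(s),b(Y^{\omega_2}_{s_\dd},\LL^{\omega_2}(s_\dd))\> = 2\<Y^{\omega_2}(s_\dd),b(Y^{\omega_2}_{s_\dd},\LL^{\omega_2}(s_\dd))\>+2\<Y^{\omega_2}(s)-Y^{\omega_2}(s_\dd),b(Y^{\omega_2}_{s_\dd},\LL^{\omega_2}(s_\dd))\>,
\]
apply \eqref{eq6} to the first piece (which produces $c_\gg+(\aa_{\LL(s_\dd)}+\gg)|Y(s_\dd)|^2+\bb_{\LL(s_\dd)}\|Y_{s_\dd}\|_\8^2$ after also accounting for the diffusion via the analogue of \eqref{x4}), and control the second via the Lipschitz bound \eqref{w3} combined with the explicit representation $Y^{\omega_2}(s)-Y^{\omega_2}(s_\dd)=b(\cdot)(s-s_\dd)+\si(\LL^{\omega_2}(s_\dd))(\omega_1(s)-\omega_1(s_\dd))$. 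After Young's inequality this yields a remainder of order $\sqrt{\dd}\,\|Y^{\omega_2}_{s_\dd}\|_\8^2$ plus a martingale piece that is bounded by BDG exactly as in \eqref{b13}.

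Then I would take expectation with respect to $\P_1$, apply Gronwall's inequality as in \eqref{r3} (the integration-by-parts trick that produced \eqref{r1}--\eqref{q3} carries over verbatim), and finally pass to expectation under $\P_2$. Invoking the segment-norm inequality $\|Y_{t_\dd}\|_\8\le\sup_{t-\tau-\dd\le s\le t}|Y(s)|$ already used in Lemma \ref{lem4.1}, one arrives at an estimate of the shape
\[
\E\|Y_{t_\dd}(\xi,i)\|_\8^2 \le c\,(1+\|\xi\|_\8^2)\,\e^{\rho\sqrt{\dd}\,t}\,\E\exp\Big(\int_0^t\big(\gg+\aa_{\LL(s_\dd)}+\e^{-\hat{\aa}(\tau+\dd)}\bb_{\LL(s_\dd)}\big)\d s\Big)+(\text{similar integrated term}),
\]
for some $\rho>0$ independent of $\dd$. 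The uniform bound \eqref{b2} then follows by invoking Lemma \ref{exp} with $K_i=\gg+\aa_i+\e^{-\hat{\aa}(\tau+\dd)}\bb_i$: since $\eta_1>0$ by the hypothesis of Theorem \ref{th4} and the dominant eigenvalue $\eta_K$ depends continuously on the perturbation parameters $\gg$ and $\dd$, one may pick $\gg$ and $\dd_0$ both sufficiently small to guarantee $\eta_K>2\rho\sqrt{\dd}$ throughout $(0,\dd_0)$.

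The main obstacle is the balancing act in this last step: every discretisation error $Y(s)-Y(s_\dd)$ eventually feeds back into $\|Y_{s_\dd}\|_\8^2$ via Young's inequality, so one must be careful that the effective perturbation added to $\bb_i$ is genuinely $o(1)$ as $\dd\downarrow 0$. Only then will the exponential decay rate $\eta_K$ delivered by Lemma \ref{exp} strictly dominate the Gronwall blow-up factor $\e^{\rho\sqrt{\dd}\,t}$, so that the product stays bounded uniformly in $t\ge\tau+1$ and \eqref{b2} emerges with a constant independent of $t$.
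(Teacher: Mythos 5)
Your proposal follows essentially the same route as the paper's own proof: Itô's formula applied piecewise to the discounted square (no smoothing needed since $s\mapsto\aa_{\LL(s_\dd)}$ is piecewise constant), the splitting of the drift term at $Y(s_\dd)$ controlled via \eqref{eq6}, \eqref{w3} and the one-step increment bound, the BDG/$\kk$ absorption of the stochastic integral as in \eqref{b13}, Gronwall with the segment-norm inequality $\|Y_{t_\dd}\|_\8\le\sup_{t-\tau-\dd\le s\le t}|Y(s)|$, and finally Lemma \ref{exp} with $\gg,\kk,\dd$ chosen small so that the $O(\sqrt{\dd})$ perturbation does not destroy the exponential decay coming from $\eta_1>0$. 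The only cosmetic difference is that the paper folds the $\sqrt{\dd}$-error into the exponential functional $\Phi(s_\dd)$ rather than extracting a separate factor $\e^{\rho\sqrt{\dd}\,t}$, which is an equivalent bookkeeping choice.
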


\begin{proof}
Below, we assume $t\ge\tau+1$. Carrying out the procedure to gain
\eqref{z1}, we have
\begin{equation}\label{b11}
\begin{split}
&\e^{-\int_0^t(\gg+\aa_{\LL^{\omega_2}(s_\dd)})\d
s}|Y^{\omega_2}(t)|^2\\&=|\xi(0)|^2+\int_0^t\e^{-\int_0^s(\gg+\aa_{\LL^{\omega_2}(r_\dd)})\d
r}\Big\{-(\gg+\aa_{\LL^{\omega_2}(s_\dd)})|Y^{\omega_2}(s)|^2\\
&\quad+2\<Y^{\omega_2}(s),b(Y_{s_\dd}^{\omega_2},\LL^{\omega_2}(s_\dd))\>+\|\si(\LL^{\omega_2}(s_\dd))\|_{\rm
HS}^2\Big\}\d s\\
&\quad+2\int_0^t\e^{-\int_0^s(\gg+\aa_{\LL^{\omega_2}(r_\dd)})\d
r}\<Y^{\omega_2}(s),\si(\LL^{\omega_2}(s_\dd))\d \omega_1(s)\>,
\end{split}
\end{equation}
where $\gg>0$ is introduced in \eqref{eq6} and is to be determined.
Thanks to \eqref{w3}, it holds that
\begin{equation}\label{w1}
\begin{split}
|Y^{\omega_2}(t)-Y^{\omega_2}({t_\dd})|^2 &\le
2L^2_0\dd\|Y_{t_\dd}^{\omega_2}\|_\8^2+c\,|\omega_1(t)-\omega_1(t_\dd)|^2.
\end{split}
\end{equation}
and, from   \eqref{w2} and $Y(t)=\xi(-\tau)$ for any $
t\in[-\tau-1,-\tau) $, that
\begin{equation}\label{w7}
\|Y_{t_\dd}^{\omega_2}\|_\8\le\sup_{t-\tau-\dd\le s\le
t}|Y^{\omega_2}(s)|.
\end{equation}
%
%
%
%
%
Thus, by combining \eqref{eq6} with \eqref{b11}-\eqref{w7}, it
follows that
\begin{equation}\label{b12}
\begin{split}
&\e^{-\int_0^t(\gg+\aa_{\LL^{\omega_2}(s_\dd)})\d
s}|Y^{\omega_2}(t)|^2\\
&\le|\xi(0)|^2+\int_0^t\e^{-\int_0^s(\gg+\aa_{\LL^{\omega_2}(r_\dd)})\d
r}\Big\{c+((\gg+\check{a})\sqrt{\dd}+\bb_{\LL^{\omega_2}(s_\dd)})\|Y_{s_\dd}^{\omega_2}\|_\8^2\\
&\quad+\ff{1+\gg+2\check{a}}{\sqrt{\dd}}|Y^{\omega_2}(s)-Y^{\omega_2}(s_\dd)|^2+\sqrt{\dd}
|b(Y_{s_\dd}^{\omega_2},\LL^{\omega_2}(s_\dd))|^2\Big\}\d s+\Theta^{\omega_2}(t)\\
&\le|\xi(0)|^2+\int_0^t\e^{-\int_0^s(\gg+\aa_{\LL^{\omega_2}(r_\dd)}^\vv)\d
r}\Big\{c+\ff{c}{\sqrt{\dd}}\,|\omega_1(t)-\omega_1(t_\dd)|^2\\
&\quad+(\vartheta\sqrt{\dd}+\bb_{\LL^{\omega_2}(s_\dd)})\sup_{s-\tau-\dd\le
r\le s}|Y(r)|^2\Big\}\d s+\Theta^{\omega_2}(t)
\end{split}
\end{equation}
where $\vartheta:=\gg+\check{a}+2L^2_0(2+\gg+2\check{a})$, and
$$
\Theta^{\omega_2}(t):=2\int_0^t\e^{-\int_0^s(\gg+\aa_{\LL^{\omega_2}(r_\dd)})\d
r}\<Y^{\omega_2}(s),\si(\LL^{\omega_2}(s_\dd))\d \omega_1(s)\>. $$
Following the argument to derive \eqref{b13},  for   $0\le s\le t$
with $t-s\in[0, \tau+\dd]$ and
  $\kk\in(0,1)$, which is also to be determined, we have
\begin{equation}\label{b14}
\begin{split}
\E_{\P_1}\Big(\sup_{s\le r\le t}\Theta^{\omega_2}(r)\Big)
&\le\kk\,\e^{\hat{\alpha}(\tau+\dd)}\Pi^{\omega_2}(t)+c\,\e^{-\int_0^t(\dd+\aa_{\LL^{\omega_2}(r_\dd)})\d
r},
\end{split}
\end{equation}
and observe that
\begin{equation*}
\begin{split}
\Pi^{\omega_2}(t):&=\e^{-\int_0^t(\gg+\aa_{\LL^{\omega_2}(s_\dd)})\d
s}\E_{\P_1}\Big(\sup_{t-\tau-\dd\le s\le t}|Y^{\omega_2}(s)|^2
\Big)\\
&\le \e^{-\hat{\alpha}(\tau+\dd)}\E_{\P_1}\Big(\sup_{t-\tau-\dd\le s\le
t}\Big(\e^{-\int_0^s(\gg+\aa_{\LL^{\omega_2}(r)})\d
r}|Y^{\omega_2}(s)|^2\Big)\Big).
\end{split}
\end{equation*}
Hence, we deduce from \eqref{b12} and \eqref{b14} that
\begin{equation*}
\begin{split}
\Pi^{\omega_2}(t) &\le
\ff{\e^{-\hat{\alpha}(\tau+\dd)}}{1-\kk}\Big\{c\,\|\xi\|^2_\8+c\,\e^{-\int_0^t(\gg+\aa_{\LL^{\omega_2}(r_\dd)})\d
r}+c\int_0^t\e^{-\int_0^s(\gg+\aa_{\LL^{\omega_2}(r_\dd)}^\vv)\d
r}\d s\\
&\quad+\int_0^t(\vartheta\sqrt{\dd}+\bb_{\LL^{\omega_2}(s_\dd)})\Pi^{\omega_2}(s)\d
s\Big\}.
\end{split}
\end{equation*}
Thus, an application of Gronwall's inequality enables us to get
\begin{equation}\label{b15}
\begin{split}
\Pi^{\omega_2}(t) &\le
\ff{\e^{-\hat{\alpha}(\tau+\dd)}}{1-\kk}\Big\{c\,\|\xi\|^2_\8+c\,\e^{-\int_0^t(\gg+\aa_{\LL^{\omega_2}(r_\dd)})\d
r}+c\int_0^t\e^{-\int_0^s(\gg+\aa_{\LL^{\omega_2}(r_\dd)}^\vv)\d
r}\d s\Big\}\\
&\quad+\ff{\e^{-\hat{\alpha}(\tau+\dd)}}{1-\kk}\int_0^t\Big\{c\,\|\xi\|^2_\8+c\,\e^{-\int_0^s(\gg+\aa_{\LL^{\omega_2}(r_\dd)})\d
r}+c\int_0^s\e^{-\int_0^u(\gg+\aa_{\LL^{\omega_2}(r_\dd)}^\vv)\d
r}\d u\Big\}\\
&\quad\times\Phi^{\omega_2}(s_\dd)
\exp\Big(\int_s^t\Phi^{\omega_2}(r_\dd)\d r\Big)\d s,
\end{split}
\end{equation}
in which
\begin{equation*}
\Phi^{\omega_2}(t_\dd):=\ff{\e^{-\hat{\alpha}(\tau+\dd)}(\vartheta\dd+\bb_{\LL^{\omega_2}(t_\dd)})}{1-\kk}.
\end{equation*}
For any $0\le s\le t$, set
\begin{equation*}
\Upsilon^{\omega_2}(s,t):=\int_s^t\Phi^{\omega_2}(u_\dd)
\exp\Big(\int_u^t\Phi^{\omega_2}(r_\dd)\d r\Big)\d u.
\end{equation*}
For any $0\le s\le t$, there exist   integers $j,k>0$ such that
$s\in[j\dd,(j+1)\dd)$ and $t\in[k\dd,(k+1)\dd)$. If $j=k,$ then we
obtain that
\begin{equation}\label{b16}
\Upsilon^{\omega_2}(s,t)=\int_s^t\Phi^{\omega_2}(k\dd)
\e^{\Phi^{\omega_2}(k\dd)(t-u)}\d
u=\e^{\Phi^{\omega_2}(k\dd)(t-s)}-1=\exp\Big(\int_s^t\Phi^{\omega_2}(r_\dd)\d
r\Big)-1.
\end{equation}
In the sequel, we assume that $j<k.$ Observe that
\begin{equation}\label{b17}
\begin{split}
\Upsilon^{\omega_2}(s,t)
&=\e^{\Phi^{\omega_2}(k\dd)(t-k\dd)}\Upsilon(s,k\dd)+
\e^{\Phi^{\omega_2}(k\dd)(t-k\dd)}-1\\
&=\e^{\Phi^{\omega_2}(k\dd)(t-k\dd)}\Big\{\e^{\Phi^{\omega_2}((k-1)\dd)\dd}\Upsilon(s,(k-1)\dd)+\e^{\Phi^{\omega_2}((k-1)\dd)\dd}-1\Big\}+ \e^{\Phi^{\omega_2}(k\dd)(t-k\dd)}-1\\
&=\e^{\Phi^{\omega_2}(k\dd)(t-k\dd)+\Phi^{\omega_2}((k-1)\dd)\dd}\Upsilon(s,(k-1)\dd)+ \e^{\Phi^{\omega_2}(k\dd)(t-k\dd)+\Phi^{\omega_2}((k-1)\dd)\dd}-1\\
&=\cdots\\
&=\e^{\Phi^{\omega_2}(k\dd)(t-k\dd)+\Phi^{\omega_2}((k-1)\dd)\dd+\cdots+\Phi^{\omega_2}(j\dd)((j+1)\dd-s)}-1\\
&=\exp\Big(\int_s^t\Phi^{\omega_2}(r_\dd)\d r\Big)-1.
\end{split}
\end{equation}
Subsequently, taking \eqref{b15}-\eqref{b17} and Fubini's theorem
into account, we deduce that
\begin{equation*}
\begin{split}
\E\Big(\sup_{t-\tau-\dd\le s\le t}|Y(s)|^2 \Big)\le
c\,\E\Big\{1+\e^{\int_0^t(\gg+\aa_{\LL(s_\dd)}+\Phi(s_\dd))\d
s}+\int_0^t\e^{\int_s^t(\gg+\aa_{\LL(r_\dd)}+\Phi(r_\dd))\d r}\d
s\Big\},
\end{split}
\end{equation*}
where
\begin{equation*}
\Phi(t_\dd):=\ff{\e^{-\hat{\alpha}(\tau+\dd)}(\vartheta\sqrt{\dd}+\bb_{\LL(t_\dd)})}{1-\kk}.
\end{equation*}
Thus,  with the help of   $\eta_1>0$, \eqref{b2} follows from Lemma
\ref{exp} and by taking $\gg,\dd,\kk\in(0,1)$ sufficiently small.
\end{proof}

Now we are ready to finish the proof of Theorem \ref{th3}.

\begin{proof}
With Lemmas \ref{lem4.1} and \ref{lem4.2} in hand, we can complete
the argument of Theorem \ref{th3} by mimicking the proof of Theorem
\ref{th0}.
\end{proof}

\section{Proof of Theorem \ref{th5}}\label{sec5}
Before we complete the proof of Theorem \ref{th5}, let's make some
preparations. For any $t\ge0$, let $\mathcal
{F}_t=\si((W(u),\LL(u)),0\le u\le t)\vee\mathcal {N}$, where
$\mathcal {N}$ stands for the set of all $\P$-null sets in $\F $.

\begin{lem}\label{Markov}
{\rm $(Y_{k\dd},\LL(k\dd))$ is a homogeneous Markov chain, i.e.,
\begin{equation}\label{m2}
\begin{split}
&\P((Y_{(k+1)\dd},\LL((k+1)\dd))\in
A\times\{j\}|(Y_{k\dd},\LL(k\dd))  =(\xi,i))\\
&=\P((Y_{\dd},\LL(\dd))\in A\times\{j\}|(Y_0,\LL(0))=(\xi,i))
\end{split}
\end{equation}
and
\begin{equation}\label{m3}
\P((Y_{(k+1)\dd},\LL((k+1)\dd))\in
A\times\{i\}|\F_{k\dd})=\P((Y_{(k+1)\dd},\LL((k+1)\dd))\in
A\times\{i\}|(Y_{k\dd},\LL(k\dd)))
\end{equation}
for any $A\in\mathscr{B}(\C)$ and $(\xi,i)\in\C\times\S$. }
\end{lem}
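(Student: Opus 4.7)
The plan is to exhibit $(Y_{(k+1)\dd},\LL((k+1)\dd))$ as a measurable function of $(Y_{k\dd},\LL(k\dd))$ and an innovation whose conditional distribution given $\F_{k\dd}$ depends on $\LL(k\dd)$ alone. Since for any $s\in[k\dd,(k+1)\dd)$ one has $s_\dd=k\dd$, integrating \eqref{a1} over $[k\dd,(k+1)\dd]$ yields
\begin{equation*}
Y((k+1)\dd)=Y_{k\dd}(0)+b(Y_{k\dd},\LL(k\dd))\,\dd+\si(Y_{k\dd},\LL(k\dd))\,\DD W_k,
\qquad \DD W_k:=W((k+1)\dd)-W(k\dd).
\end{equation*}
Moreover, by \eqref{w2}, $Y_{(k+1)\dd}$ is the linear interpolation of $Y((k+1-M)\dd),\ldots,Y(k\dd),Y((k+1)\dd)$, and the first $M$ of these values can be recovered from $Y_{k\dd}$ via $Y((k+i)\dd)=Y_{k\dd}(i\dd)$ for $i=-M+1,\ldots,0$. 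Hence there exists a measurable map $\Psi:\C\times\S\times\R^m\to\C$ such that
\begin{equation*}
Y_{(k+1)\dd}=\Psi(Y_{k\dd},\LL(k\dd),\DD W_k).
\end{equation*}

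Next I would use the three structural facts built into the probability space $(\OO,\F,\P)$: (i) $\DD W_k$ is independent of $\F_{k\dd}$ with law only depending on $\dd$ (stationary independent increments); (ii) $W$ and $(\LL(s))_{s\ge 0}$ are independent, so $\DD W_k$ is independent of $\LL((k+1)\dd)$ as well; (iii) the chain $(\LL(s))$ is a time-homogeneous Markov process, so the conditional law of $\LL((k+1)\dd)$ given $\F_{k\dd}$ is a function of $\LL(k\dd)$ alone and coincides with the law of $\LL(\dd)$ given $\LL(0)=\LL(k\dd)$. Together these imply that, given $\F_{k\dd}$, the pair $(\DD W_k,\LL((k+1)\dd))$ has a conditional distribution depending measurably only on $\LL(k\dd)$, and this distribution equals the (unconditional) law of $(\DD W_0,\LL(\dd))$ given $\LL(0)=\LL(k\dd)$.

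Fixing a bounded measurable $g:\C\times\S\to\R$ and setting
\begin{equation*}
\phi(\xi,i):=\E\big[g\big(\Psi(\xi,i,\DD W_0),\LL(\dd)\big)\,\big|\,\LL(0)=i\big],
\end{equation*}
the conclusion of the previous paragraph gives
\begin{equation*}
\E\big[g(Y_{(k+1)\dd},\LL((k+1)\dd))\,\big|\,\F_{k\dd}\big]=\phi(Y_{k\dd},\LL(k\dd)),
\end{equation*}
which delivers \eqref{m3} (since the right-hand side is $\si(Y_{k\dd},\LL(k\dd))$-measurable). Taking $g={\bf 1}_{A\times\{j\}}$ and then an outer conditional expectation on $\{(Y_{k\dd},\LL(k\dd))=(\xi,i)\}$ produces \eqref{m2}.

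The argument is essentially routine bookkeeping, and no sharp estimate is needed. The only nontrivial point to execute carefully is step one: checking that the piecewise-linear segment $Y_{(k+1)\dd}$ depends on the past only through $Y_{k\dd}$ (not through finer information inside the previous intervals), for which one must use precisely that the EM dynamics freeze the coefficients on each subinterval $[j\dd,(j+1)\dd)$ and that the interpolation in \eqref{w2} only reads the grid values $Y(j\dd)$. Once $\Psi$ is in hand, the joint independence/Markov structure of $(W,\LL)$ makes the rest immediate.
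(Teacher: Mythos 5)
Your proposal is correct and follows essentially the same route as the paper: the paper's explicit map $\chi^{\xi,j}_{(k+1)\dd}$ is exactly your $\Psi(\xi,j,\cdot)$, and both arguments combine this measurable one-step representation with independence of the Brownian increment from $\F_{k\dd}$, the independence of $W$ and $\LL$, and equality in law of the increments to obtain homogeneity. The only cosmetic difference is that you invoke the Markov property of $\LL$ directly, while the paper encodes the chain transition via $\Lambda^{j,\dd}_{k+1}=j+\Lambda((k+1)\dd)-\Lambda(k\dd)$ and conditional independence given $(Y_{k\dd},\LL(k\dd))$; the substance is the same.
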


\begin{proof}
We shall verify \eqref{m2} and \eqref{m3} one-by-one. To begin, we
show that
  \eqref{m2} holds.
It is easy to see from \eqref{w2} that
\begin{equation}\label{m1}
Y_{k\dd}(i\dd)=Y((k+i)\dd),~~~~i=-M,\cdots,-1.
\end{equation}
Observe from \eqref{a1} and \eqref{m1} that
\begin{equation}\label{ww2}
\begin{aligned}
Y_\delta(\theta)&=Y((1+i)\delta)+\ff{\theta-i\delta}{\delta}\{
Y((2+i)\delta)-
Y((1+i)\delta)\}\\
&=\left\{\begin{array}{lll}
 Y(0)+\ff{\theta+\delta}{\delta}\{
Y(\delta)-
Y(0)\},~~~~~~~~~~~~~~~~~~~~~~~~~~~~~~\theta\in[-\dd,0]\\
 Y((1+i)\dd)+\ff{\theta-i\dd}{\dd}\{
Y((2+i)\dd)-
Y((1+i)\dd)\},~~~~\theta\in[i\dd,(i+1)\dd], ~~~i\neq-1\\
\end{array}\right.\\
&=\left\{\begin{array}{lll}
 Y(0)+\ff{\theta+\delta}{\delta}\{
b(Y_0,\LL(0))\dd+\si(Y_0,\LL(0))  W(\dd)\},~~~~~~\theta\in[-\dd,0],\\
 Y((1+i)\dd)+\ff{\theta-i\dd}{\dd}\{
Y((2+i)\dd)-
Y((1+i)\dd)\},~~~~\theta\in[i\dd,(i+1)\dd], ~~~i\neq-1\\
\end{array}\right.\\
\end{aligned}
\end{equation}
and that
\begin{equation}\label{m13}
\begin{aligned}
Y_{(k+1)\dd}(\theta)&=Y((k+1+i)\dd)+\ff{\theta-i\dd}{\dd}\{
Y((k+2+i)\dd)- Y((k+1+i)\dd)\}\\
&=\left\{\begin{array}{lll} Y_{k\dd}(0)\!+\!\ff{\theta\!+\!\dd}{\dd}\{
Y((k\!+\!1)\dd)\!-\! Y_{k\dd}(0)\},
~~~~~~~~~~~~~~~~~~~~~ \theta\in[-\dd,0]\\
Y_{k\dd}((1\!+\!i)\dd)\!+\!\ff{\theta\!-\!i\dd}{\dd}\{
Y_{k\dd}((2\!+\!i)\dd)\!-\! Y_{k\dd}((1\!+\!i)\dd)\},~~ \theta\in[i\dd,(i\!+\!1)\dd], ~ i\neq-1\\
\end{array}\right.\\
&=\left\{\begin{array}{lll} Y_{k\dd}(0)+\ff{\theta\!+\!\dd}{\dd}\{
b(Y_{k\dd},\LL(k\dd))\dd\\
~~~~~~~~~~~ +\si(Y_{k\dd},\LL(k\dd))
(W((k\!+\!1)\dd)\!-\!W(k\dd))\}, ~~\theta\in[-\dd,0]\\
Y_{k\dd}((1\!+\!i)\dd)\!+\!\ff{\theta\!-\!i\dd}{\dd}\{ Y_{k\dd}((2\!+\!i)\dd)\!-\!
Y_{k\dd}((1\!+\!i)\dd)\},~ \theta\in[i\dd,(i\!+\!1)\dd], ~ i\neq-1.
\end{array}\right.\\
\end{aligned}
\end{equation}
Thus, comparing \eqref{w2} with \eqref{m13} and noting that $
W((k+1)\dd)-W(k\dd)$ and $  W(\dd)$ are identical in distribution,
we infer that $(Y_{(k+1)\dd}, \LL((k+1)\dd))$  and $(Y_\dd,
\LL(\dd))$ are equal in distribution given
$(Y_{k\dd},\LL(k\dd))=(\xi, i)$ and $(Y_0,\LL(0))=(\xi, i)$,
respectively. Therefore, \eqref{m2} holds immediately.

Next, we demonstrate that   \eqref{m3} is fulfilled.  Set
\begin{equation*}
\chi^{\xi, j}_{(k+1)\dd}(\theta):=
\begin{cases}
\xi(0)+\frac{\theta+\dd}{\dd}\{b(\xi,j)\dd+\si(\xi,j)
(W((k+1)\dd)-W(k\dd))\}, ~~~~\theta\in[-\dd,0]\\
\xi((1+i)\dd)+\frac{\theta-i\dd}{\dd}[\xi((2+i)\dd)-\xi((1+i)\dd)],~~~~~~~~~~~~~~~~~
 \theta\in[i\dd,(i+1)\dd], ~ i\neq-1.
\end{cases}
\end{equation*}
and  $\Lambda_{k+1}^{j, \dd}:=j+\Lambda((k+1)\dd)-\Lambda(k\dd)$.
Thus, it is easy to see that
\begin{equation}\label{m4}
\LL((k+1)\dd)=\Lambda_{k+1}^{\LL(k\dd), \dd}~~~~~\mbox{ and }~~~~~~
Y_{(k+1)\dd}=\chi^{  Y_{k\dd}, \LL(k\dd)}_{(k+1)\dd}.
\end{equation}
For any $0\le s\le t$, let $\mathcal {G}_{t,s}=\si(W(u)-W(s),s\le
u\le t)\vee\mathcal {N}$. Plainly, $\mathcal {G}_{(k+1)\dd,k\dd}$ is
independent of $\F_{k\dd}$. Moreover,  $\chi_{(k+1)\dd}^{\xi,j}$
depends completely on the increment $W((k+1)\dd)-W(k\dd)$  so is
$\mathcal {G}_{(k+1)\dd,k\dd}$-measurable.  Hence,
$\chi_{(k+1)\dd}^{\xi,j}$ is independent of ${\cal F}_{k\dd}$.
Noting that $\chi_{(k+1)\dd}^{  Y_{k\dd}, \LL(k\dd)}$  and
$\Lambda_{(k+1)\dd}^{\LL(k\dd), \dd}$ are conditionally independent
given $(Y_{k\dd}, \LL(k\dd)).$ 
Therefore,
\begin{equation*}
\begin{split}
&\P((Y_{(k+1)\dd},\LL((k+1)\dd)\in A\times\{j\}|{\cal
F}_{k\dd})\\
&=\E(I_{A\times\{j\}}(\chi^{  Y_{k\dd},
\LL(k\dd)}_{(k+1)\dd},\Lambda_{k+1}^{\LL(k\dd)})|{\cal F}_{k\dd})\\
&=\E(I_A(\chi^{  Y_{k\dd}, \LL(k\dd)}_{(k+1)\dd})|{\cal
F}_{k\dd})\E(I_{\{j\}}(\Lambda_{k+1}^{\LL(k\dd)})|{\cal F}_{k\dd})\\
&=\E(I_A(\chi^{ \xi,
i}_{(k+1)\dd}))|_{\xi=Y_{k\dd},i=\LL(k\dd)}\E(I_{\{j\}}(\Lambda_{k+1}^{i}))|_{i=\LL(k\dd)}\\
&=\P(\chi^{ \xi, i}_{(k+1)\dd}\in
A)|_{\xi=Y_{k\dd},i=\LL(k\dd)}\P(\Lambda_{k+1}^{i}\in\{j\})|_{i=\LL(k\dd)}\\
&=\P((\chi^{ \xi, i}_{(k+1)\dd},\Lambda_{k+1}^{i})\in
A\times\{j\})|_{\xi=Y_{k\dd},i=\LL(k\dd)}\\
&=\P((Y_{(k+1)\dd},\LL((k+1)\dd))\in
A\times\{i\}|(Y_{k\dd},\LL(k\dd))).
\end{split}
\end{equation*}
So \eqref{m3} holds, and then $(Y_{k\dd},\LL(k\dd))$ is a
homogeneous Markov chain.
\end{proof}

\begin{lem}\label{le5}
{\rm Under the assumptions of Theorem \ref{th5},
  there exist $\dd_0\in(0,1)$ and $\aa>0$ such that
\begin{equation}\label{q1}
\E\|Y_{t_\dd}(\xi,i)-Y_{t_\dd}(\eta,i)\|_\8^2\le c\,\e^{-\aa
t}\|\xi-\eta\|^2_\8, ~~~~t\ge\tau, ~~\dd\in(0,\dd_0)
\end{equation}
for any $\xi,\eta\in\C$ and $i\in\S.$

}
\end{lem}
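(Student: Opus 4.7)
The plan is to mirror the strategy of Lemma \ref{lem4.1}, but with the multiplicative noise structure controlled via (H1), (H2), and \eqref{a7}. Fixing $\omega_2\in\OO_2$ and writing $\Upsilon^{\omega_2}(t):=Y^{\omega_2}(t;\xi,i)-Y^{\omega_2}(t;\eta,i)$, I would first derive, by induction on the grid intervals exactly as in the proof of \eqref{z1}, an identity for $\e^{-\int_0^t\aa_{\LL^{\omega_2}(s_\dd)}\d s}\E_{\P_1}|\Upsilon^{\omega_2}(t)|^2$; the only difference from \eqref{z1} is that an additional term $\|\si(Y^{\omega_2}_{s_\dd}(\xi,i),\LL^{\omega_2}(s_\dd))-\si(Y^{\omega_2}_{s_\dd}(\eta,i),\LL^{\omega_2}(s_\dd))\|_{\rm HS}^2$ appears inside the $\d s$-integrand, coming from the It\^o correction for the multiplicative noise.

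Next, I would split the inner product $2\<\Upsilon^{\omega_2}(s),\Delta b\>=2\<\Upsilon^{\omega_2}(s_\dd),\Delta b\>+2\<\Upsilon^{\omega_2}(s)-\Upsilon^{\omega_2}(s_\dd),\Delta b\>$ so that (H1) applies to the first piece together with the $\|\Delta\si\|_{\rm HS}^2$ term, yielding the bound $\aa_{\LL^{\omega_2}(s_\dd)}|\Upsilon^{\omega_2}(s_\dd)|^2+\bb_{\LL^{\omega_2}(s_\dd)}\int_{-\tau}^0|\Upsilon^{\omega_2}(s_\dd+\theta)|^2v(\d\theta)$. The residual inner product is handled by Young's inequality combined with \eqref{a7}, producing an $O(\sqrt{\dd})$ correction, while the $-\aa_{\LL(s_\dd)}|\Upsilon(s)|^2$ term is absorbed by rewriting it via $|\Upsilon(s_\dd)|^2-|\Upsilon(s)|^2$ exactly as in \eqref{w0}.

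The main obstacle is the stochastic part of $\Upsilon^{\omega_2}(s)-\Upsilon^{\omega_2}(s_\dd)$, namely the contribution $(\si(Y^{\omega_2}_{s_\dd}(\xi,i),\LL^{\omega_2}(s_\dd))-\si(Y^{\omega_2}_{s_\dd}(\eta,i),\LL^{\omega_2}(s_\dd)))(\omega_1(s)-\omega_1(s_\dd))$; its $\E_{\P_1}$-second moment is bounded by (H2) by $L\dd\bigl(|\Upsilon^{\omega_2}(s_\dd)|^2+\int_{-\tau}^0|\Upsilon^{\omega_2}(s_\dd+\theta)|^2v(\d\theta)\bigr)$. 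Combining this quadratic error with the $\int_{-\tau}^0\cdot\,v(\d\theta)$-piece coming from (H1), dominating both by $\|\Upsilon^{\omega_2}_{s_\dd}\|_\8^2$ via the exponential-weighting device in \eqref{x6}, and collecting all contributions (one from H1, one from Young on the drift error, one from the H2-diffusion error, one from the $v$-averaged past), the prefactor multiplying $\bb_{\LL^{\omega_2}(s_\dd)}$ picks up a factor of $4\e^{-\hat{\aa}\tau}$, which is precisely the shift appearing in the definition of $Q_3$. This leads to $\e^{-\int_0^t\aa_{\LL^{\omega_2}(s_\dd)}\d s}\E_{\P_1}|\Upsilon^{\omega_2}(t)|^2\le c\|\Upsilon^{\omega_2}_0\|_\8^2+\int_0^t\bigl(\rho\sqrt{\dd}+4\e^{-\hat{\aa}\tau}\bb_{\LL^{\omega_2}(s_\dd)}\bigr)\e^{-\int_0^s\aa_{\LL^{\omega_2}(r_\dd)}\d r}\E_{\P_1}\|\Upsilon^{\omega_2}_{s_\dd}\|_\8^2\d s$ for some constant $\rho>0$ depending on $\check{\aa}$, $L$, $\check{\bb}$, $L_1$.

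Gronwall's inequality, followed by taking $\E_{\P_2}$ and passing from the pointwise bound to the segment-norm bound via \eqref{w2} and the convention $Y(t)=\xi(-\tau)$ on $[-\tau-1,-\tau)$ just as in Lemma \ref{lem4.1}, then delivers $\E\|Y_{t_\dd}(\xi,i)-Y_{t_\dd}(\eta,i)\|_\8^2\le c\|\xi-\eta\|_\8^2\,\e^{\rho\sqrt{\dd}\,t}\,\E\,\e^{\int_0^tK_{\LL(s_\dd)}\d s}$ with $K_i:=\aa_i+4\e^{-\hat{\aa}\tau}\bb_i$. Since the spectral parameter $\eta_K$ associated with $Q_K=Q_3$ in Lemma \ref{exp} equals $\eta_3>0$, the estimate $\E\,\e^{\int_0^tK_{\LL(s_\dd)}\d s}\le c\,\e^{-\eta_3 t/2}$ holds for every sufficiently small $\dd$; selecting $\dd_0\in(0,1)$ so small that $\rho\sqrt{\dd_0}<\eta_3/4$ then yields \eqref{q1} with, for example, $\aa=\eta_3/4$.
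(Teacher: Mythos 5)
Your overall architecture (grid-by-grid It\^o identity as in \eqref{z1}, splitting $2\<\Upsilon(s),\Delta b\>$ around $\Upsilon(s_\dd)$, controlling $\Upsilon(s)-\Upsilon(s_\dd)$ by (H2) and \eqref{a7}, then Lemma \ref{exp} with $K_i=\aa_i+4\e^{-\hat\aa\tau}\bb_i$ and $\eta_3>0$) matches the paper's, but there is a genuine gap at the Gronwall step. Because the noise is multiplicative, the difference $\Upsilon^{\omega_2}$ carries a nontrivial martingale part, so you must take $\E_{\P_1}$ before you can discard it; after that, your key displayed inequality bounds the \emph{pointwise} quantity $\E_{\P_1}|\Upsilon^{\omega_2}(t)|^2$ by an integral of the \emph{segment} quantity $\E_{\P_1}\|\Upsilon^{\omega_2}_{s_\dd}\|_\8^2$. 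This is not a closed inequality, and Gronwall does not apply to it: unlike Lemma \ref{lem4.1}, where the noise cancels and the sup over the segment can be taken pathwise \emph{before} any expectation, here $\sup_s\E|\Upsilon(s)|^2\neq\E\sup_s|\Upsilon(s)|^2$, and upgrading the left-hand side to $\E\sup$ would require a BDG/martingale-maximal estimate inside the Gronwall loop (with an absorbing constant), which you never set up. Your closing sentence "passing from the pointwise bound to the segment-norm bound via \eqref{w2}\dots just as in Lemma \ref{lem4.1}" inherits the same problem: that passage is legitimate in the additive case only because it is pathwise.

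The paper avoids this precisely by \emph{not} dominating the memory terms by $\|\Upsilon_{s_\dd}\|_\8^2$. It keeps the $\int_{-\tau}^0|\Upsilon_{s_\dd}(\theta)|^2v(\d\theta)$ structure of (H1)/(H2)/\eqref{a7}, uses the piecewise-linear form \eqref{w2} to bound $|\Upsilon_{s_\dd}(\theta)|^2$ by the two grid endpoint values, shifts time (this is where the factor $4\e^{-\hat\aa\tau}$ multiplying $\bb_i$ actually comes from, via \eqref{a3}--\eqref{a6}, not from the mixture of sources you list), and closes a Gronwall inequality for $\sup_{t-\dd\le s\le t}\E_{\P_1}|\Upsilon^{\omega_2}(s)|^2$, i.e.\ a supremum of expectations over a window of length $\dd$ only. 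This yields the pointwise decay \eqref{q2}; the segment estimate \eqref{q1} is then obtained at the very end by a separate BDG argument using (H2) (the "standard procedure", in the spirit of \eqref{v7}), a step your proposal omits entirely. To repair your argument you would either have to reproduce this interpolation/time-shift device, or run a genuine $\E\sup$ Gronwall with BDG and an absorbing parameter and then argue (by continuity of $\eta$ in that parameter) that positivity of the relevant spectral bound survives — neither of which is present in your write-up.
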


\begin{proof}
For fixed $\omega_2$, we focus on the following SDE
\begin{equation*}
\d Y^{\omega_2}(t) =b(Y_{t_\dd}^{\omega_2},\LL^{\omega_2}(t_\dd))\d
t+\si(Y_{t_\dd}^{\omega_2},\LL^{\omega_2}(t_\dd))\d \omega_1(t)
\end{equation*}
with the initial value $Y^{\omega_2}(\theta)=\xi(\theta),
\theta\in[-\tau,0],$ and $\LL^{\omega_2}(0)=i\in\S.$ Let
$\Upsilon^{\omega_2}(t)$ be defined as in \eqref{a0}. By \eqref{a1},
it is easy to see that
\begin{equation}\label{a2}
\E_{\P_1}|\Upsilon^{\omega_2}(t)-\Upsilon^{\omega_2}(t_\dd)|^2
\le(L_0+L)\dd\Big\{\E_{\P_1}|\Upsilon^{\omega_2}(t_\dd)|^2+\int_{-\tau}^0\E_{\P_1}|\Upsilon^{\omega_2}_{t_\dd}(\theta)|^2v(\d\theta)\Big\}.
\end{equation}
 Following the procedure to derive
\eqref{z1}, we obtain from ({\bf H1}), \eqref{a7} and \eqref{a2}
that
\begin{equation}\label{a4}
\begin{split}
&\e^{-\int_0^t\aa_{\LL^{\omega_2}(s_\dd)}\d
s}\E_{\P_1}|\Upsilon^{\omega_2}(t)|^2\\&=|\Upsilon^{\omega_2}(0)|^2+\int_0^t\e^{-\int_0^s\aa_{\LL^{\omega_2}(r_\dd)}\d
r}\E_{\P_1}\Big\{-\aa_{\LL^{\omega_2}(s_\dd)}|\Upsilon^{\omega_2}(s)|^2\\
&\quad+2\<\Upsilon^{\omega_2}(s),b(Y_{s_\dd}^{\omega_2}(\xi,i_0),\LL^{\omega_2}(s_\dd))-b(Y_{s_\dd}^{\omega_2}(\eta,i_0),\LL^{\omega_2}(s_\dd))\>\\
&\quad+\|\si(Y_{s_\dd}^{\omega_2}(\xi,i),\LL^{\omega_2}(s_\dd))-\si(Y_{s_\dd}^{\omega_2}(\eta,i),\LL^{\omega_2}(s_\dd))\|_{\rm
HS}^2\Big\}\d
s\\
&\le|\Upsilon^{\omega_2}(0)|^2+\int_0^t\e^{-\int_0^s\aa_{\LL^{\omega_2}(r_\dd)}\d
r}\Big\{(\check{\aa}+L_0)\sqrt{\dd}\E_{\P_1}|\Upsilon^{\omega_2}(s_\dd)|^2
\\
&\quad+(L_0\sqrt{\dd}+\bb_{\LL^{\omega_2}(s_\dd)})\int_{-\tau}^0\E_{\P_1}|\Upsilon^{\omega_2}_{s_\dd}(\theta)|^2v(\d\theta)+\ff{1+2\check{\aa}}{\sqrt{\dd}}\E_{\P_1}|\Upsilon^{\omega_2}(s)-\Upsilon^{\omega_2}(s_\dd)|^2\Big\}\d
s\\
&\le|\Upsilon^{\omega_2}(0)|^2+\nu\sqrt{\dd}\int_0^t\e^{-\int_0^s\aa_{\LL^{\omega_2}(r_\dd)}\d
r}\E_{\P_1}|\Upsilon^{\omega_2}(s_\dd)|^2\d s +\Psi^{\omega_2}(t),
\end{split}
\end{equation}
where $\nu:=\check{\aa}+L_0+(1+2\check{\aa})(L_0+L)$ and
\begin{equation*}
\Psi^{\omega_2}(t):=\int_0^t(\nu\sqrt{\dd}+\bb_{\LL^{\omega_2}(s_\dd)})\e^{-\int_0^s\aa_{\LL^{\omega_2}(r_\dd)}\d
r}\int_{-\tau}^0\E_{\P_1}|\Upsilon^{\omega_2}_{s_\dd}(\theta)|^2v(\d\theta)\d
s.
\end{equation*}
By virtue of \eqref{w2}, we deduce that
\begin{equation}\label{a3}
\begin{split}
&
\Psi^{\omega_2}(t)=\int_{-\tau}^0\int_0^t(\nu\sqrt{\dd}+\bb_{\LL^{\omega_2}(s_\dd)})\e^{-\int_0^s\aa_{\LL^{\omega_2}(r_\dd)}\d
r}\E_{\P_1}|\Upsilon^{\omega_2}_{s_\dd}(\theta)|^2\d
 sv(\d\theta)\\
 &\le2\sum_{i=-N}^{-1}\int_{i\dd}^{(i+1)\dd}\int_0^t(\nu\sqrt{\dd}+\bb_{\LL^{\omega_2}(s_\dd)})\e^{-\int_0^s\aa_{\LL^{\omega_2}(r_\dd)}\d
 r}\E_{\P_1}|\Upsilon^{\omega_2}(s_\dd+i\dd)|^2\d
 sv(\d\theta)\\
 &\quad+2\sum_{i=-N}^{-1}\int_{i\dd}^{(i+1)\dd}\int_0^t(\nu\sqrt{\dd}+\bb_{\LL^{\omega_2}(s_\dd)})\e^{-\int_0^s\aa_{\LL^{\omega_2}(r_\dd)}\d
 r}\E_{\P_1}|\Upsilon^{\omega_2}(s_\dd+(i+1)\dd)|^2\d
 sv(\d\theta)\\
 &\le c\,\|\xi-\eta\|_\8^2+\int_0^t\Theta_\nu^{\omega_2}(s)\e^{-\int_0^{s}\aa_{\LL^{\omega_2}(r_\dd)}\d
 r}\E_{\P_1}|\Upsilon^{\omega_2}(s_\dd)|^2\d s,
\end{split}
\end{equation}
where
\begin{equation}\label{q5}
\Theta^{\omega_2}_\nu(t):=2\e^{-\hat{\alpha}\tau}\sum_{i=-N}^{-1}\int_{i\dd}^{(i+1)\dd}\{2\nu\sqrt{\dd}
+\bb_{\LL^{\omega_2}(t_\dd-i\dd)}+\bb_{\LL^{\omega_2}(t_\dd-(i+1)\dd)}\}v(\d\theta).
\end{equation}
Inserting \eqref{a3} into \eqref{a4}, we arrive at
\begin{equation*}
\begin{split}
\e^{-\int_0^t\aa_{\LL^{\omega_2}(s_\dd)}\d
s}\E_{\P_1}|\Upsilon^{\omega_2}(t)|^2\le
c\,\|\xi-\eta\|_\8^2+\int_0^t(\nu\sqrt{\dd}+\Theta_\nu^{\omega_2}(s))\e^{-\int_0^s\aa_{\LL^{\omega_2}(r_\dd)}\d
r}\E_{\P_1}|\Upsilon^{\omega_2}(s_\dd)|^2\d s.
\end{split}
\end{equation*}
This, together with the fact that
\begin{equation}\label{a00}
\begin{split}
\Pi^{\omega_2}(t):=\e^{-\int_0^t\aa_{\LL^{\omega_2}(s_\dd)}\d
s}\sup_{t-\dd\le s\le
t}\E_{\P_1}|\Upsilon^{\omega_2}(s)|^2\le\e^{-\hat a
\dd}\sup_{t-\dd\le s\le
t}\Big(\e^{-\int_0^s\aa_{\LL^{\omega_2}(r_\dd)}\d
r}\E_{\P_1}|\Upsilon^{\omega_2}(s)|^2\Big),
\end{split}
\end{equation}
implies that
\begin{equation*}
\Pi^{\omega_2}(t)\le c\,\|\xi-\eta\|_\8^2+\e^{-\hat a
\dd}\int_0^t(\nu\sqrt{\dd}+\Theta_\nu^{\omega_2}(s))\Pi^{\omega_2}(s)\d
s.
\end{equation*}
Thus, an application of Gronwall inequality leads to
\begin{equation}\label{a5}
\Pi^{\omega_2}(t)\le c\,\|\xi-\eta\|_\8^2\e^{\e^{-\hat a
\dd}\int_0^t(\nu\sqrt{\dd}+\Theta_\nu^{\omega_2}(s))\d s}.
\end{equation}
Furthermore, observe that
\begin{equation}\label{a6}
\begin{split}
\int_0^t\Theta_\nu^{\omega_2}(s)\d
s
&=2\e^{-\hat{\alpha}\tau}\sum_{i=-N}^{-1}\int_{i\dd}^{(i+1)\dd}\int_{-i\dd}^{t-i\dd}\{2\nu\sqrt{\dd}
+\bb_{\LL^{\omega_2}(s_\dd)}\}\d sv(\d\theta)\\
&\quad+2\e^{-\hat{\alpha}\tau}\sum_{i=-N}^{-1}\int_{i\dd}^{(i+1)\dd}\int_{-(i+1)\dd}^{t-(i+1)\dd}\bb_{\LL^{\omega_2}(s_\dd)}\d sv(\d\theta)\\
&\le c+4\e^{-\hat{\alpha}\tau}\int_0^t\{\nu\sqrt{\dd}
+\bb_{\LL^{\omega_2}(s_\dd)}\}\d s.
\end{split}
\end{equation}
Hence, we infer from \eqref{a5} and \eqref{a6} that
\begin{equation*}
\begin{split}
\E|Y(t;\xi,i)-Y(t;\eta,i)|^2\le
c\,\|\xi-\eta\|_\8^2\E\,\e^{\e^{-\hat a \dd}(1+4\e^{-\hat
a\tau})\nu\sqrt{\dd} t+\int_0^t\{\aa_{\LL(s_\dd)}+4\e^{-\hat
a(\tau+\dd)}\}\bb_{\LL(s_\dd)}\d s}.
\end{split}
\end{equation*}
By applying Lemma \ref{exp} and combining with $\eta_3>0$, there
exist $\dd_0\in(0,1)$ and $\aa>0$ such that
\begin{equation}\label{q2}
\E|Y(t;\xi,i)-Y(t;\eta,i)|^2\le c\,\e^{-\aa t}\|\xi-\eta\|^2_\8,
~~~~t\ge\tau, ~~\dd\in(0,\dd_0).
\end{equation}
With ({\bf H2}) and \eqref{q2} in hand, \eqref{q1} can be obtained
by a standard procedure.
\end{proof}

\begin{lem}\label{le6}
{\rm Let the assumptions of Lemma \ref{le5} hold. Then, there exist
some $\dd_0\in(0,1) $ such that
\begin{equation}\label{q4}
\E\|Y_{t_\dd}(\xi,i)\|_\8^2\le c\, (1+\|\xi\|^2_\8), ~~~~t\ge\tau,
~~\dd\in(0,\dd_0)
\end{equation}
for any $(\xi,i)\in\C\times\S.$

}
\end{lem}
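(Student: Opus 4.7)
The proof of Lemma \ref{le6} should follow the strategy of Lemma \ref{lem4.2} (which handles the additive, discrete case) adapted to the multiplicative noise setting, borrowing the segment-norm manipulations that already appear in the proof of Lemma \ref{le5}. The plan is to fix $\omega_2\in\Omega_2$, work with the SDE \eqref{a1} under $\P_1$, and apply It\^o's formula to $\e^{-\int_0^t(\gamma+\aa_{\LL^{\omega_2}(s_\dd)})\d s}|Y^{\omega_2}(t)|^2$ for a small parameter $\gamma>0$ to be chosen later. The dissipativity estimate \eqref{eq60} obtained from ({\bf H1}) reduces the drift and diffusion contribution to a linear combination of $|Y^{\omega_2}(s_\dd)|^2$ and $\int_{-\tau}^0|Y^{\omega_2}_{s_\dd}(\theta)|^2 v(\d\theta)$, up to a constant $c_\gamma$, together with a stochastic integral $\Theta^{\omega_2}(t)$.

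The next step is to handle the gap $|Y^{\omega_2}(s)-Y^{\omega_2}(s_\dd)|^2$ created by freezing the coefficients at $s_\dd$: using \eqref{a7} together with ({\bf H2}) exactly as in \eqref{a2}, this is dominated by $C\dd\big(|Y^{\omega_2}(s_\dd)|^2+\int_{-\tau}^0|Y^{\omega_2}_{s_\dd}(\theta)|^2v(\d\theta)\big)+c|\omega_1(s)-\omega_1(s_\dd)|^2$. Following the trick of \eqref{a3}, the $v$-integrated segment term is reduced to a discrete-in-time sum and rewritten, via the interpolation formula \eqref{w2}, as a weighted integral of $\E_{\P_1}|Y^{\omega_2}(s_\dd)|^2$ plus a harmless boundary term controlled by $\|\xi\|_\8^2$; the associated weight then takes the form of a function $\Theta_\nu^{\omega_2}(s)$ of the type \eqref{q5} satisfying an estimate analogous to \eqref{a6}, so that $\int_0^t\Theta_\nu^{\omega_2}(s)\d s\le c+4\e^{-\hat{\aa}\tau}\int_0^t(\nu\sqrt{\dd}+\bb_{\LL^{\omega_2}(s_\dd)})\d s$. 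For the martingale part $\Theta^{\omega_2}(t)$, BDG's inequality applied as in \eqref{b13}--\eqref{b14} yields a bound of the form $\kappa\,\e^{\hat\aa(\tau+\dd)}\Pi^{\omega_2}(t)+c\,\e^{-\int_0^t(\gamma+\aa_{\LL^{\omega_2}(r_\dd)})\d r}$ for $\kappa\in(0,1)$, where $\Pi^{\omega_2}(t):=\e^{-\int_0^t(\gamma+\aa_{\LL^{\omega_2}(s_\dd)})\d s}\E_{\P_1}(\sup_{t-\tau-\dd\le s\le t}|Y^{\omega_2}(s)|^2)$; absorbing $\kappa\Pi^{\omega_2}(t)$ to the left and invoking Gronwall's inequality conditionally on $\omega_2$ (together with the step-function manipulation \eqref{b16}--\eqref{b17} to reduce $\exp\int_s^t\Phi^{\omega_2}(r_\dd)\d r$ to the natural Riemann sum) should give
\begin{equation*}
\E\Big(\sup_{t-\tau-\dd\le s\le t}|Y(s)|^2\Big)\le c\,(1+\|\xi\|_\8^2)\,\E\exp\Big(\int_0^t\big(\gamma+\aa_{\LL(s_\dd)}+\Phi(s_\dd)\big)\d s\Big),
\end{equation*}
with $\Phi(t_\dd)=\e^{-\hat{\aa}(\tau+\dd)}(\nu\sqrt{\dd}+4\e^{-\hat\aa\tau}\bb_{\LL(t_\dd)})/(1-\kappa)$ up to the additional constants absorbed into $\gamma$.

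Finally, the hypothesis $\eta_3>0$ is what makes the exponential functional above finite and uniformly bounded in $t$. Since the drift of the exponent is essentially $\aa_i+4\e^{-\hat\aa\tau}\bb_i$ modulo $O(\gamma)+O(\sqrt\dd)+O(\kappa)$ perturbations, one invokes Lemma \ref{exp} with $K_i:=\aa_i+\frac{4\e^{-\hat\aa(\tau+\dd)}}{1-\kappa}\bb_i+\gamma+\tilde{c}\sqrt\dd$; the continuity of $\eta_K$ in $K$ at the point giving $Q_3$ together with $\eta_3>0$ yields $\eta_K>0$ provided $\gamma,\kappa,\dd$ are sufficiently small. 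This produces $\dd_0\in(0,1)$ and a bound of the form $c\e^{-\eta_Kt/2}$, hence \eqref{q4}.

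The main obstacle is the bookkeeping in the second step: in the multiplicative case the segment term $\int_{-\tau}^0|Y_{s_\dd}(\theta)|^2v(\d\theta)$ appears both in the dissipativity inequality and in the discretization error \eqref{a2}, so the coefficient of the discrete-time $|Y^{\omega_2}(s_\dd)|^2$ acquires a $\sqrt\dd$-proportional contribution from the diffusion-Lipschitz constant $L$ in addition to what one sees in the additive case. Making sure this extra $\sqrt\dd$ still gets absorbed into the small perturbation of $Q_3$, so that Lemma \ref{exp} remains applicable, is the delicate point and is the reason why $\dd_0$ must depend on $\gamma,\kappa$ as well as on the strict positivity of $\eta_3$.
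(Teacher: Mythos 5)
Your overall architecture---It\^o's formula for $\e^{-\int_0^t(\gg+\aa_{\LL^{\omega_2}(s_\dd)})\d s}|Y^{\omega_2}(t)|^2$, the discretization-error bound from \eqref{a7} and ({\bf H2}), the reduction of the $v$-integrated segment term as in \eqref{a3}, \eqref{q5}, \eqref{a6}, and the final appeal to Lemma \ref{exp} with a slightly perturbed rate---matches the paper except at one point, and that point is a genuine gap: the treatment of the martingale term. You keep the stochastic integral $\Theta^{\omega_2}(t)$ and bound it via BDG ``as in \eqref{b13}--\eqref{b14}'', claiming a bound $\kk\,\e^{\hat{\aa}(\tau+\dd)}\Pi^{\omega_2}(t)+c\,\e^{-\int_0^t(\gg+\aa_{\LL^{\omega_2}(r_\dd)})\d r}$. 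That bound is specific to the additive case, where $\si=\si(i)$ takes finitely many values and hence $\|\si(\LL^{\omega_2}(u_\dd))\|_{\rm HS}$ is bounded by a constant; this boundedness is exactly what collapses the BDG term to $c\,\E_{\P_1}\|X^{\omega_2}_t\|_\8$ in \eqref{b13}. In the multiplicative case $\|\si(Y^{\omega_2}_{u_\dd},\LL^{\omega_2}(u_\dd))\|^2_{\rm HS}$ only has linear growth in $\|Y^{\omega_2}_{u_\dd}\|^2_\8$, so after BDG and Young's inequality the second contribution is not a constant multiple of $\e^{-\int_0^t(\cdot)}$ but an extra term of the form $\ff{c}{\kk}\int_s^t\e^{-\int_0^u(\cdot)}\E_{\P_1}\|Y^{\omega_2}_{u_\dd}\|^2_\8\,\d u$, whose coefficient cannot be made small (it blows up as $\kk\downarrow0$, and any variant of the splitting leaves a constant of order at least $\sqrt{\tau}$). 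Feeding this back through Gronwall adds a non-small constant to the rates $\aa_i+4\e^{-\hat{\aa}\tau}\bb_i$ in the exponential functional, so Lemma \ref{exp} with your $K_i$ no longer applies and $\eta_3>0$ ceases to be sufficient; you would need an additional smallness condition on $L$. The delicate point is therefore not the $\sqrt{\dd}$ bookkeeping you flag, but the unboundedness of the diffusion coefficient in the BDG step.

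The paper's proof is organized precisely to avoid this: it applies It\^o's formula and immediately takes $\E_{\P_1}$ (see \eqref{e1}), so the stochastic integral vanishes and no BDG enters the Gronwall loop; the iterated quantity is $\Pi^{\omega_2}(t)=\e^{-\int_0^t(\gg+\aa_{\LL^{\omega_2}(s_\dd)})\d s}\sup_{t-\dd\le s\le t}\E_{\P_1}|Y^{\omega_2}(s)|^2$ as in \eqref{a00}, a supremum of expectations over a window of length $\dd$ rather than an expectation of a supremum over $[t-\tau-\dd,t]$. This gives the uniform pointwise second-moment bound from Lemma \ref{exp} under $\eta_3>0$ alone, and only afterwards is the segment norm $\E\|Y_{t_\dd}\|^2_\8$ recovered by a localized BDG estimate over a window of length comparable to $\tau$ (the ``standard procedure'' of \eqref{v7}--\eqref{v8}, invoked also at the end of Lemma \ref{le5}), where the BDG constants are harmless because they no longer enter the exponent. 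Reorder your argument accordingly---moments first, supremum last---and the rest of your plan goes through.
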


\begin{proof}
Mimicking the procedure to derive \eqref{b11}, we have
\begin{equation}\label{e1}
\begin{split}
&\e^{-\int_0^t(\gg+\aa_{\LL^{\omega_2}(s_\dd)})\d
s}\E_{\P_1}|Y^{\omega_2}(t)|^2\\&=|\xi(0)|^2+\int_0^t\e^{-\int_0^s(\gg+\aa_{\LL^{\omega_2}(r_\dd)})\d
r}\E_{\P_1}\Big\{-(\gg+\aa_{\LL^{\omega_2}(s_\dd)})|Y^{\omega_2}(s)|^2\\
&\quad+2\<Y^{\omega_2}(s),b(Y_{s_\dd}^{\omega_2},\LL^{\omega_2}(s_\dd))\>+\|\si(Y_{s_\dd}^{\omega_2},\LL^{\omega_2}(s_\dd))\|_{\rm
HS}^2\Big\}\d s,
\end{split}
\end{equation}
where $\gg>0$ is introduced in \eqref{eq60}. By ({\bf H2}) and
\eqref{a7}, it follows that
\begin{equation}\label{e2}
\begin{split}
\E_{\P_1}|Y^{\omega_2}(t)-Y^{\omega_2}(t_\dd)|^2&=\E_{\P_1}|b(Y_{t_\dd}^{\omega_2},\LL^{\omega_2}(t_\dd))|^2\dd^2+\E_{\P_1}\|\si(Y_{t_\dd}^{\omega_2},\LL^{\omega_2}(t_\dd))\|^2_{\rm
HS}\dd\\
&\le
c+2(L_0+L)\dd\Big\{\E_{\P_1}|Y^{\omega_2}(t_\dd)|^2+\int_{-\tau}^0\E_{\P_1}|Y_{t_\dd}^{\omega_2}(\theta)|^2v(\d\theta)\Big\}.
\end{split}
\end{equation}
Then, taking \eqref{e1} and \eqref{e2} into consideration, we deduce
that
\begin{equation}\label{q0}
\begin{split}
&\e^{-\int_0^t(\gg+\aa_{\LL^{\omega_2}(s_\dd)})\d
s}\E_{\P_1}|Y^{\omega_2}(t)|^2\\
&\le|\xi(0)|^2+\int_0^t\e^{-\int_0^s(\gg+\aa_{\LL^{\omega_2}(r_\dd)})\d
r}\Big\{c+\ff{1+2(\gg+\check{\aa})}{\sqrt{\dd}}\E_{\P_1}|Y^{\omega_2}(s)-Y^{\omega_2}(s_\dd)|^2\\
&\quad+(\gg+\check{\aa})\sqrt{\dd}\E_{\P_1}|Y^{\omega_2}(s_\dd)|^2+(\gg+\bb_{\LL^{\omega_2}(s_\dd)})\int_{-\tau}^0\E_{\P_1}|Y^{\omega_2}_{s_\dd}(\theta)|^2v(\d\theta)\\
&\quad+\sqrt{\dd}\E_{\P_1}|b(Y_{s_\dd}^{\omega_2},\LL^{\omega_2}(s_\dd))|^2\Big\}\d
s\\
&\le|\xi(0)|^2+\int_0^t\e^{-\int_0^s(\gg+\aa_{\LL^{\omega_2}(r_\dd)})\d
r}\Big\{c+\rho\sqrt{\dd}\,\E_{\P_1}|Y^{\omega_2}(s_\dd)|^2\Big\}\d
s+\Psi^{\omega_2}_0(t),
\end{split}
\end{equation}
where $\rho:=4(1+\gg+\check{a})(L_0+L)$ and
\begin{equation*}
\Psi^{\omega_2}_0(t):=\int_0^t(\rho\sqrt{\dd}+\bb_{\LL^{\omega_2}(s_\dd)})\e^{-\int_0^s(\gg+\aa_{\LL^{\omega_2}(r_\dd)})\d
r}\int_{-\tau}^0\E_{\P_1}|Y^{\omega_2}_{s_\dd}(\theta)|^2v(\d\theta)\d
s.
\end{equation*}
 Following the argument to
deduce \eqref{a3}, we find that
\begin{equation}\label{q6}
\Psi^{\omega_2}_0(t)\le
c\,\|\xi\|_\8^2+\int_0^t\Theta^{\omega_2}_\rho(s)\e^{-\int_0^{s}(\gg+\aa_{\LL^{\omega_2}(r_\dd)})\d
 r}\E_{\P_1}|Y^{\omega_2}(s_\dd)|^2\d s,
\end{equation}
where $\Theta^{\omega_2}_\rho>0$ is defined as in \eqref{q5}.
Substituting \eqref{q6} into \eqref{q0} leads to
\begin{equation*}
\begin{split}
&\e^{-\int_0^t(\gg+\aa_{\LL^{\omega_2}(s_\dd)})\d
s}\E_{\P_1}|Y^{\omega_2}(t)|^2\\
&\le
c\,\|\xi\|^2_\8+\int_0^t\e^{-\int_0^s(\gg+\aa_{\LL^{\omega_2}(r_\dd)})\d
r}\Big\{c+(\rho\sqrt{\dd}+\Theta^{\omega_2}_\rho(s))\E_{\P_1}|Y^{\omega_2}(s_\dd)|^2\Big\}\d
s.
\end{split}
\end{equation*}
This, applying the Gronwall inequality and  utilizing \eqref{a00}
with $\Upsilon^{\omega_2}$ being replaced by $Y^{\omega_2}$ enables
us to obtain that
\begin{equation*}
\begin{split}
\Pi^{\omega_2}(t)
&\le
c\,\|\xi\|^2_\8+c\int_0^t\e^{-\int_0^s(\gg+\aa_{\LL^{\omega_2}(r_\dd)})\d
r}\d s+\e^{-\hat a
\dd}\int_0^t\Big\{c\,\|\xi\|^2_\8+c\int_0^s\e^{-\int_0^u(\gg+\aa_{\LL^{\omega_2}(r_\dd)})\d
r}\d u\Big\}\\
&\quad\times(\rho\sqrt{\dd}+\Theta^{\omega_2}(s))\e^{\int_s^t\e^{-\hat
a \dd}(\rho\sqrt{\dd}+\Theta^{\omega_2}(r))\d r}\d s.
\end{split}
\end{equation*}
Subsequently, the desired assertion follows from Fubini's theorem
and Lemma \ref{exp} and by taking $\gg,\dd\in(0,1)$ sufficiently
small.
\end{proof}

So far, the proof of Theorem \ref{th5} can be available.
\begin{proof}
With the help of Lemmas \ref{Markov}-\ref{le6}, we can finish the
proof by following the argument of Theorem \ref{th4}.
\end{proof}

\end{document}